\tikzset{help lines/.style={step=#1cm,very thin, color=gray},
help lines/.default=.5} 
\tikzset{thick grid/.style={step=#1cm,thick, color=gray},
thick grid/.default=1} 
\tikzstyle{ann}=[fill=white, inner sep=1pt, font=\footnotesize{#1}]
\tikzstyle{annfar}=[inner sep=2pt, font=\footnotesize{#1}]
\tikzstyle{annfarer}=[inner sep=3pt, font=\footnotesize{#1}]
\tikzstyle{annrot}=[fill=white, text=blue!75!black, inner sep=1pt, font=\footnotesize{#1}]
\tikzstyle{wall}=[thick]
\tikzstyle{nullwall}=[thick, dotted]
\newcommand{\green}[1]{\textcolor{green!75!black}{#1}}
\newcommand{\red}[1]{\textcolor{red!75!black}{#1}}
\newtheorem{thm}{Theorem}[subsection]
\newtheorem{thm*}{Theorem}
\newtheorem{lem}[thm]{Lemma}
\newtheorem{cor}[thm]{Corollary}
\newtheorem{prop}[thm]{Proposition}
\theoremstyle{definition}
\newtheorem{defn}[thm]{Definition}
\newtheorem{eg}[thm]{Example}
\theoremstyle{remark}
\newtheorem{rem}[thm]{Remark}
\numberwithin{equation}{section}
\newcommand{\op}{^{\operatorname{op}}}
\newcommand{\mat}[1]{\ensuremath{
\left[\begin{matrix}#1
\end{matrix}\right]
}}
\newcommand{\then}{\Rightarrow}
\newcommand{\ifff}{\Leftrightarrow}
 \newcommand{\kay}{k} 
 \newcommand{\mutindex}{s} 
 \newcommand{\mgs}[1]{\underline{#1}} 
 \newcommand{\aye}{i} 
\newcommand{\jay}{j} 
 \newcommand{\brk}[1]{\langle#1\rangle}
\DeclareMathOperator{\Hom}{Hom}%
\DeclareMathOperator{\Ext}{Ext}%
\DeclareMathOperator{\End}{End}%
\DeclareMathOperator{\Irr}{Irr} 
\DeclareMathOperator{\undim}{\underline{dim}}
\newcommand{\field}[1]{\mathbb{#1}}
\newcommand{\ZZ}{\ensuremath{{\field{Z}}}}
\newcommand{\RR}{\ensuremath{{\field{R}}}}
\newcommand{\commentout}[1]{}
\newcommand{\cC}{\ensuremath{{\mathcal{C}}}}
\newcommand{\cI}{\ensuremath{{\mathcal{I}}}}
\newcommand{\cP}{\ensuremath{{\mathcal{P}}}}
\newcommand{\cV}{\ensuremath{{\mathcal{V}}}}
\newcommand{\cW}{\ensuremath{{\mathcal{W}}}}
\title{Semi-invariant Pictures and two Conjectures on Maximal green sequences}
\author{Thomas Br\"ustle}
\address{D\'epartement de Math\'ematiques, Universit\'e de Sherbrooke, Sherbrooke, CA and Department of Mathematics, Bishops University, Sherbrooke, CA}
\email{thomas.brustle@usherbrooke.ca}
 \thanks{The first author is supported by NSERC and Bishop's University}
\author{Stephen Hermes}
\address{Department of Mathematics, Wellesley College, Wellesley, MA 02481}\email{shermes@wellesley.edu}
\author{Kiyoshi Igusa}
\address{Department of Mathematics, Brandeis University, Waltham, MA 02454}\email{igusa@brandeis.edu}
 \thanks{The third author is supported by NSA Grant \#H98230-13-1-0247}
\author{Gordana Todorov}
\address{Department of Mathematics, Northeastern University, Boston, MA 02115}\email{g.todorov@neu.edu}
\thanks{The fourth author is supported by NSF Grants \#DMS-1103813, \#DMS-0901185}
\date{\today}                      
\subjclass[2010]{
16G20; 20F55}
\keywords{cluster category, maximal green sequences, valued quivers, cluster mutation}
\begin{document}

\begin{abstract} 
We use semi-invariant pictures to prove two conjectures about maximal green sequences. First: if $Q$ is any acyclic valued quiver with an arrow $\jay\to\aye$ of infinite type then any maximal green sequence for $Q$ must mutate at $\aye$ before mutating at $\jay$. Second: for any quiver $Q'$ obtained by mutating an acyclic valued quiver $Q$ of tame type, there are only finitely many maximal green sequences for $Q'$. Both statements follow from the Rotation Lemma for  reddening sequences and this in turn follows from the Mutation Formula for the semi-invariant picture for $Q$.
\end{abstract}

\maketitle

\section*{Introduction}\label{sec: Introduction}
 
Maximal green sequences are maximal paths in the oriented cluster exchange graph: Fixing an initial seed induces an orientation on the cluster exchange graph, a ``green'' sequence is an oriented path (passing an arrow from source to target is called ``green'', whereas passing an arrow in reverse direction is called ``red''), and a maximal green sequence is one starting from the initial seed (the only source in the oriented exchange graph) to the unique sink (see Figures \ref{fig04} and \ref{fig05}). More generally, any sequence ending in the unique sink of an  oriented cluster exchange graph is called a reddening sequence \cite{Keller}.
Categorification of cluster algebras \cite{BMRRT} has led to a wealth of different interpretations and generalizations of the oriented cluster exchange graph, for instance as poset of functorially finite torsion classes, or of certain t-structures in a triangulated category, see \cite{BY} for an overview. In these poset interpretations, a maximal green sequence is simply a maximal chain. 
Following work of Reineke, Keller studied maximal green sequences to obtain quantum dilogarithm identities \cite{Keller}.
Moreover, maximal green sequences are considered in physics (under the name ``finite chambers'') when studying the BPS spectrum of a quantum field theory with extended supersymmetry, see \cite{ACCERV,X} and references therein.

This paper proves two conjectures about maximal green sequences:

\begin{thm*}[Target before Source Conjecture]
Given an acyclic valued quiver $Q$ with an arrow $ j\xrightarrow{(d_{ji},d_{ij})} i $ of infinite type, \emph{i.e.}, with $d_{ij}d_{ji}\ge4$, any maximal green sequence mutates at the target $i$ before the source $j$.
\end{thm*}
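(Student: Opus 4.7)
The plan is to recast the combinatorial claim as a statement about paths in the semi-invariant picture $L(Q)$, reduce to the rank two subquiver on $\{i,j\}$, and then use the infinite-type structure of the rank two picture together with the Rotation Lemma to force the order of mutation.

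I would first translate the data of a maximal green sequence into geometric terms. A maximal green sequence $(k_1,\ldots,k_N)$ corresponds to a generic path $\gamma$ in $L(Q)$ starting in the positive chamber $R_+$, ending in the negative chamber $R_-$, and crossing each wall from its positive side to its negative side. At the $m$-th crossing, the vertex $k_m$ being mutated is the unique index such that the current $c$-vector $c_{k_m}^{(m-1)}$ is the positive generator of the normal line to the wall. Thus ``first mutation at vertex $k$'' corresponds to the first wall crossing whose positive normal equals the then-current $c_k$; initially, this is $e_k$.

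I would then argue by contradiction. Suppose some maximal green sequence mutates at $j$ for the first time at step $m$, with no mutation at $i$ among $k_1,\ldots,k_{m-1}$. Using the Rotation Lemma iteratively on mutations $k_l \notin \{i,j\}$ for $l<m$, I would transport these mutations past the $j$-mutation. What remains is a (generally mutated) acyclic valued quiver $Q'$ whose first green mutation in the rotated sequence is at $j$, together with the key fact that the valued arrow between $i$ and $j$ in $Q'$ retains the data $d_{ij}d_{ji}\ge 4$. The preservation of the rank two subpicture under mutation at third vertices is the content of the Mutation Formula for $L(Q)$: the walls of $L(Q')$ whose normals lie in the plane $\langle e_i^\ast, e_j^\ast\rangle$ are determined by $Q'_{\{i,j\}}$.

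Finally, I would derive a contradiction in the rank two infinite-type picture. For an acyclic valued rank two quiver $j\to i$ with $d_{ij}d_{ji}\ge 4$, the walls of $L$ form two infinite sequences of real Schur roots accumulating to the boundary of the null cone. The wall $D(e_j)$, corresponding to the projective simple $S_j$, lies on the accumulation side of the preprojective family and is \emph{not} a codimension-one face of $R_+$; only $D(e_i)$ (corresponding to the injective simple $S_i$) is such a face. Hence no generic green path leaving $R_+$ can make its first wall-crossing at $D(e_j)$, contradicting the existence of the rotated sequence above. The principal obstacle I anticipate is the Rotation step: one must verify that rotating past a third-vertex mutation simultaneously preserves ``first $\{i,j\}$-mutation is at $j$'' and preserves the infinite-type datum $d_{ij}d_{ji}\ge 4$ on the surviving arrow between $i$ and $j$. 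I expect both to be consequences of the Mutation Formula for semi-invariant pictures, but checking the rotation carefully in the valued (non-simply-laced) setting is the technical heart of the argument.
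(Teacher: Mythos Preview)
Your rotation setup is essentially the paper's: one rotates past the earlier mutations $k_1,\ldots,k_{m-1}\notin\{i,j\}$ to obtain a maximal green sequence for $Q'=\mu_{k_{m-1}}\cdots\mu_{k_1}Q$ whose first mutation is at $j$, and Lemma~\ref{lemmaB} (via the cluster category) guarantees that the arrow $j\to i$ in $Q'$ is still of infinite type. So far so good.

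The gap is in your final step. Your claim that $D(e_j)$ is \emph{not} a codimension-one face of the initial chamber $R_+$ is false. For any valued quiver, the initial $c$-matrix is $I_n$, every vertex is green, and one may mutate at any vertex first; equivalently, $D(e_k)$ is a wall of $R_+$ for every $k$, including $k=j$. In the rank-two infinite-type picture, both $D(e_i)$ and $D(e_j)$ bound the outer chamber: one \emph{can} cross $D(e_j)$ first, the problem is that one then enters an infinite corridor of preinjective walls and can never reach $R_-$. That is a statement about the \emph{global} structure of green paths, not about which walls touch $R_+$.

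Moreover, even the corrected rank-two statement does not finish the argument, because the reduction ``restrict to the walls with normals in $\langle e_i^\ast,e_j^\ast\rangle$'' is not valid: the maximal green sequence for $Q'$ mutates at many other vertices, and those mutations change the $c$-vectors at $i$ and $j$. There is no direct restriction of a maximal green sequence to a full subquiver. The paper closes this gap with Theorem~\ref{thm:ibeforej}: using a second, recursive application of rotation (Lemma~\ref{lemmaD}) together with the explicit recursion $q'_{t-1}=X_j^+q_t$ for preinjective roots (Lemma~\ref{preinjectives}), one shows that \emph{every} preinjective root $q_t$ of the rank-two quiver $Q[\alpha]$, extended by zero, must occur as a $c$-vector in the tail of the sequence for $Q'$. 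Since there are infinitely many such $q_t$, this contradicts finiteness. This inductive production of infinitely many $c$-vectors is the missing idea in your proposal.
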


\begin{thm*}[Finiteness Conjecture]
If the valued quiver $Q$ is mutation equivalent to an acyclic quiver of tame type, then $Q$ has only finitely many maximal green sequences.
\end{thm*}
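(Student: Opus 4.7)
The plan is to reduce the finiteness claim for a mutation $Q'$ of an acyclic tame quiver $Q$ to the corresponding claim for $Q$ itself, via iterated application of the Rotation Lemma, and then to establish finiteness for acyclic tame $Q$ by analyzing the global structure of its semi-invariant picture $L(Q)$. Writing $Q' = \mu_{i_r}\cdots \mu_{i_1}(Q)$, a single application of the Rotation Lemma should identify the maximal green sequences of $\mu_{i_1}(Q)$ with those MGS of $Q$ whose first mutation is at $i_1$. Iterating, the MGS of $Q'$ inject into the MGS of $Q$ as the subset whose first $r$ mutations are prescribed. Hence finiteness for $Q'$ follows from finiteness for $Q$.

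For acyclic tame $Q$, the approach is to identify each MGS with a generic smooth path in $L(Q)$ from the all-positive chamber to the all-negative chamber, reading the mutation sequence off from the ordered sequence of walls the path crosses. The walls of $L(Q)$ are labeled by real Schur roots of $Q$: a countable family of preprojective roots and a countable family of preinjective roots, both accumulating on the single ray $\RR_{\geq 0}\delta$ spanned by the null root, together with real regular roots sitting inside finitely many tubes of finite rank. The Target-before-Source Theorem, applied along an arbitrary MGS and to each intermediate quiver, already imposes substantial partial-order constraints on the sequence of mutation indices; combined with the periodic structure of the tubes, these constraints should cut the set of combinatorially possible MGS down considerably.

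The hard part is ruling out MGS that oscillate across infinitely many walls near the accumulation ray $\RR_{\geq 0}\delta$. To address this, I would use the Mutation Formula to describe the chamber structure of $L(Q)$ in a neighborhood of $\delta$, showing that any generic path from the positive to the negative chamber must traverse this neighborhood through one of finitely many ``gateway'' chambers associated to the tubes of $Q$. The portions of a green path outside this neighborhood cross only finitely many walls, and the Target-before-Source constraints combined with tube periodicity force only finitely many possibilities for the gateway passage. Splicing these finite contributions together would yield the desired bound and complete the argument. The principal technical difficulty, and the step I expect to absorb most of the work, is making the gateway analysis near $\delta$ precise and showing it is uniform as one varies along the path.
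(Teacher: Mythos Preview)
Your reduction step contains a genuine gap. The Rotation Lemma does \emph{not} identify the maximal green sequences of $\mu_{i_1}(Q)$ with those MGS of $Q$ whose first mutation is at $i_1$. Rotation gives an injection
\[
\{\text{MGS of }Q\text{ starting at }i_1\}\;\hookrightarrow\;\{\text{MGS of }\mu_{i_1}Q\},
\]
but this map is not surjective: an MGS $(l_0,\dots,l_{m-1})$ of $Q'=\mu_{i_1}Q$ with permutation $\tau$ unrotates to an MGS of $\mu_{\tau(l_{m-1})}Q'$, which equals $Q$ only when $\tau(l_{m-1})=i_1$, i.e.\ only when the \emph{last} $c$-vector mutated in the $Q'$-sequence is $e_{i_1}$. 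There is no reason every MGS of $Q'$ should satisfy this. The paper itself emphasizes (Section~2.3) that the set of quivers appearing along an MGS of $Q$ is a restricted subset of the mutation class; Muller's examples show that even the existence of MGS is not mutation-invariant. Thus your claimed injection ``MGS of $Q'\hookrightarrow$ MGS of $Q$'' fails, and finiteness for acyclic $Q$ (already known from \cite{BDP}) does not transfer to $Q'$ by your argument.

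The paper repairs exactly this by passing to \emph{$r$-reddening sequences}. Given an MGS $(k_0,\dots,k_{m-1})$ on $Q'=\mu_{j_r}\cdots\mu_{j_1}Q$, one forms the reddening sequence $(j_r,\dots,j_1,j_1,\dots,j_r,k_0,\dots,k_{m-1})$ on $Q'$ with exactly $r$ red mutations, and then applies the Rotation Lemma $r$ times to obtain an $r$-reddening sequence on $Q$. This \emph{is} an injection of MGS of $Q'$ into $r$-reddening sequences of $Q$, but it forces you to prove the stronger statement that acyclic tame $Q$ has only finitely many $r$-reddening sequences for each fixed $r$, not merely finitely many MGS. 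Your sketch for the acyclic case (gateway chambers near the null root, Target-before-Source constraints, tube periodicity) is aimed only at green sequences; to make the strategy work you would have to adapt it to bound reddening sequences with a prescribed number of red steps. The paper does this via the regions $\mathcal V_k,\mathcal W_k$: exiting $\mathcal V_k$ or $\mathcal W_k$ is always a red mutation, so an $r$-reddening sequence is confined (after finitely many steps) to the complement of $\mathcal V_{(r+1)m}\setminus\mathcal W_{(r+1)m}$, which meets only finitely many clusters.
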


Oriented cluster exchange graphs are associated to cluster algebras; the edges in the cluster exchange graph represent mutations, the fundamental notion in the definition of cluster algebras. The orientation of each edge indicates mutation in the direction of positive $c$-vectors.

The original idea of the proof of the Target before Source Conjecture came from the semi-invariant pictures of \cite{IOTW2}. Using the fact that the lines are labeled by $c$-vectors and the normal orientation on the lines determines the sign of the $c$-vector, green mutations can be visualized as crossing the lines always in the direction of the normal orientation as illustrated in Figure~\ref{fig01}.\\

\begin{figure}[h]
	\begin{center}
	\newdimen\dd
	\dd=6em

\includegraphics{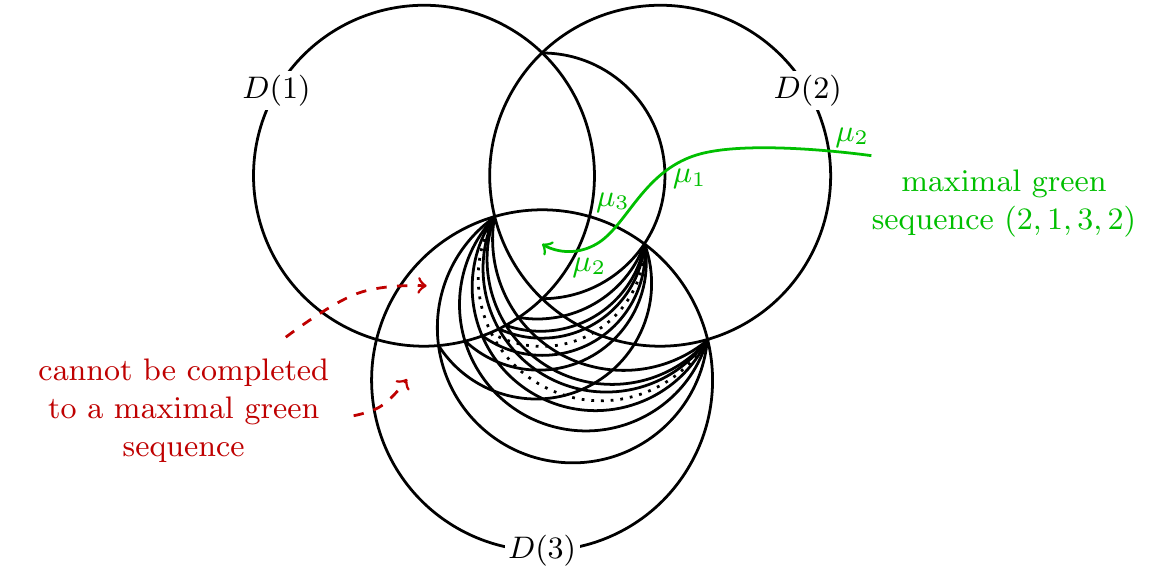}

	\end{center}
	\caption{The semi-invariant picture for the quiver $3\rightrightarrows2\to1$. The dashed red lines cannot be completed to maximal green sequences due to the multiple arrow $3\rightrightarrows2$.}\label{fig01}

\end{figure}

A maximal green sequence is a path going from the outside, unbounded region to the center which only goes inward at each wall. The double arrow $3\rightrightarrows2$ creates infinite families of walls. The solid line is the maximal green sequence $(2,1,3,2)$ and the dotted lines are green sequences which cannot be extended to maximal green sequences showing that maximal green sequences cannot mutate $3$ before $2$. Here the integer $k$ at each step indicates the mutation $\mu_k$ in the direction of the $k$-th $c$-vector, which is the same as mutation at vertex $k$ of the quiver $Q$ defining the cluster algebra.
 
\begin{figure}[h]

\[
	\begin{tikzpicture}
	\draw (0,0) node[right]{$\left(3\rightrightarrows 2\to 1 \right)$};
	\end{tikzpicture}	
	\begin{tikzpicture}
	\draw (0,0) node[right]{$\xrightarrow{\mu_2}$};
	\end{tikzpicture}	 
	\begin{tikzpicture}
	\draw (0,0) node[right]{$\left(3\leftleftarrows 2\leftarrow 1\right)$};
	\begin{scope}[xshift=.15cm]
\draw[<-,thick] (1.7,.2)..controls(1.3,.5)and(.7,.5).. (.3,.2);
\draw[<-,thick] (1.75,.3)..controls(1.3,.6)and(.7,.6).. (.25,.3);
\end{scope}
	\end{tikzpicture}
	\begin{tikzpicture}
	\draw (0,0) node[right]{$\xrightarrow{\mu_1}$};
	\end{tikzpicture}
	\begin{tikzpicture}
	\draw (0,0) node[right]{$\left(3\,\quad\ 2\to 1\right)$};
	\begin{scope}[xshift=.15cm]
\draw[->,thick] (1.7,.2)..controls(1.3,.5)and(.7,.5).. (.3,.2);
\draw[->,thick] (1.75,.3)..controls(1.3,.6)and(.7,.6).. (.25,.3);
\end{scope}
	\end{tikzpicture}
	\begin{tikzpicture}
	\draw (0,0) node[right]{$\xrightarrow{\mu_3}$};
	\end{tikzpicture} 	
	\begin{tikzpicture}
	\draw (0,0) node[right]{$\left(3\,\quad\ 2\to 1\right)$};
	\begin{scope}[xshift=.15cm]
\draw[<-,thick] (1.7,.2)..controls(1.3,.5)and(.7,.5).. (.3,.2);
\draw[<-,thick] (1.75,.3)..controls(1.3,.6)and(.7,.6).. (.25,.3);
\end{scope}
	\end{tikzpicture}
	\begin{tikzpicture}
	\draw (0,0) node[right]{$\xrightarrow{\mu_2}$};
	\end{tikzpicture} 	
	\begin{tikzpicture}
	\draw (0,0) node[right]{$\left(3\,\quad\ 2\leftarrow 1\right)$};
	\begin{scope}[xshift=.15cm]
\draw[<-,thick] (1.7,.2)..controls(1.3,.5)and(.7,.5).. (.3,.2);
\draw[<-,thick] (1.75,.3)..controls(1.3,.6)and(.7,.6).. (.25,.3);
\end{scope}
	\end{tikzpicture}	
\]

\caption{The sequence of quiver mutations for the maximal green sequence $(2,1,3,2)$ on the quiver $3\rightrightarrows2\to 1$. Note that the final quiver is isomorphic to the starting quiver by permuting vertices 1 and 2.}\label{fig01.2}
\end{figure}

The Finiteness Conjecture is known when $Q$ is acyclic \cite{BDP}. The main step in proving both the Finiteness Conjecture and the extension of the known cases to the more general cases of the Finiteness Conjecture is our Rotation Lemma (Theorem \ref{rotation}), which comes from the theory of semi-invariant pictures as developed in \cite{IOTW2}.

For each maximal green sequence there is a permutation $\sigma$ so that, for any arrow $i\to j$ in $Q$ there is an arrow $\sigma^{-1}(i)\to \sigma^{-1}(j)$ in the final quiver $Q'$. For example, in Figure \ref{fig01.2}, the permutation associated to the maximal green sequence $(2,1,3,2)$ is $\sigma=(12)$ making the final quiver $(\sigma^{-1}(3)\rightrightarrows \sigma^{-1}(2)\to\sigma^{-1}(1))=(3\rightrightarrows 1\to 2)$.

\begin{thm*}[Rotation Lemma, Theorem \ref{rotation}]
Let $(\kay_0,\kay_1,\dots,\kay_{m-1})$ be a maximal green sequence for any valued quiver $Q$ with associated permutation $\sigma$. Then the sequence
\[
	(\kay_1,\kay_2,\dots,\kay_{m-1},\sigma^{-1}(\kay_0))
\]
is a maximal green sequence on $\mu_{\kay_0}Q$ with the same permutation $\sigma$. More generally, if the original sequence is a reddening sequence (see Definition \ref{def:reddening}), then the rotated sequence has the same number of red mutations as the original sequence.
\end{thm*}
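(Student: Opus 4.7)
The plan is to interpret any reddening sequence $(k_0,k_1,\ldots,k_{m-1})$ for $Q$ as a path $\gamma$ in the semi-invariant picture $\mathcal{P}(Q)$: the path $\gamma$ starts in the outer, unbounded region (corresponding to the initial seed $Q$), each mutation $\mu_{k_i}$ crosses a single wall, and $\gamma$ terminates in the central region (corresponding to the shifted seed $Q[1]$). The color of $\mu_{k_i}$ is green (respectively red) precisely when the crossing is in the direction of (respectively opposite to) the wall's normal orientation.

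The central tool will be the Mutation Formula of \cite{IOTW2} for semi-invariant pictures, which is to furnish a canonical identification of $\mathcal{P}(Q)$ with $\mathcal{P}(\mu_{k_0}Q)$ carrying walls to walls and preserving normal orientations on every wall other than the distinguished wall $W_{k_0}$ bordering the outer region of $\mathcal{P}(Q)$. Under this identification, the region of $\mathcal{P}(Q)$ on the inner side of $W_{k_0}$ corresponds to the outer region of $\mathcal{P}(\mu_{k_0}Q)$; dually, the central region of $\mathcal{P}(Q)$, namely $Q[1]$, is adjacent to the central region of $\mathcal{P}(\mu_{k_0}Q)$, namely $(\mu_{k_0}Q)[1]$, across the wall whose $c$-vector index at $Q_m=\sigma(Q)[1]$ is precisely $\sigma^{-1}(k_0)$.

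Granting the Mutation Formula, the argument runs as follows. Truncating $\gamma$ by deleting its first segment produces a path $\gamma'$ in $\mathcal{P}(\mu_{k_0}Q)$ that starts in the outer region, realizes the mutations $(k_1,\ldots,k_{m-1})$, and terminates at $Q_m$. Appending to $\gamma'$ one crossing of the wall separating $Q_m$ from the central region of $\mathcal{P}(\mu_{k_0}Q)$ yields the rotated sequence $(k_1,\ldots,k_{m-1},\sigma^{-1}(k_0))$, ending in the center as required. Because normal orientations are preserved off $W_{k_0}$, each of the first $m-1$ mutations in the rotated sequence will have the same color as in the original. The two mutations that differ between the sequences, namely the discarded initial $\mu_{k_0}$ and the appended final $\mu_{\sigma^{-1}(k_0)}$, are both green: the former because every mutation leaving an initial seed is green (all $c$-vectors are positive there), and the latter because every mutation entering a central, shifted seed is green (all $c$-vectors at the center are negative, and a mutation at a single index flips only that one sign). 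Consequently the number of red mutations is preserved, and the maximal green case is recovered as the special case of no red mutations.

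The main obstacle is to establish the Mutation Formula at the precise level required: one must verify that the canonical identification of $\mathcal{P}(Q)$ with $\mathcal{P}(\mu_{k_0}Q)$ respects both the labeling of walls by $c$-vectors (so that the index $\sigma^{-1}(k_0)$ is read off correctly at $Q_m$) and their normal orientations (so that the colors of the middle mutations transport cleanly). Once this geometric and combinatorial input is in place, the proof of the Rotation Lemma reduces to the bookkeeping of wall-crossings and the observation above that both altered mutations lie in the \emph{always green} portions of their respective paths.
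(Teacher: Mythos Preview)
Your overall strategy matches the paper's: both rest on the Mutation Formula relating the $c$-matrices (equivalently, the wall-and-chamber pictures) of $Q$ and $\mu_{k_0}Q$, and both observe that the deleted first mutation and the appended last mutation are green. Where you diverge is presentation: the paper works algebraically with the identity $C'_s=X_{k_0}^{\varepsilon(s)}C_s$ and explicit matrix computations, whereas you phrase everything in terms of the semi-invariant picture. That is a legitimate translation, and for the maximal green case your sketch is essentially correct.

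There is, however, a genuine gap in your treatment of general reddening sequences. You assert that ``normal orientations are preserved off $W_{k_0}$, so each of the first $m-1$ mutations in the rotated sequence has the same color as in the original.'' This is false as stated. The Mutation Formula says that the sign of the mutated $c$-vector is preserved \emph{unless that $c$-vector is $\pm e_{k_0}$}, in which case it flips. In a maximal green sequence the path crosses $W_{k_0}$ exactly once (at the first step), so after truncation no further flips occur and your argument goes through. But an $r$-reddening sequence with $r>0$ may cross $W_{k_0}$ many times after the first step, and each such crossing changes color under the identification. Your bookkeeping does not account for this.

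The paper closes this gap with an additional counting lemma: in any reddening sequence the $c$-vector $+e_{k_0}$ is mutated exactly one more time than $-e_{k_0}$ (a consequence of the $k_0$-hemisphere argument). Hence among the middle $m-1$ steps there are exactly $p$ mutations at $+e_{k_0}$ and $p$ at $-e_{k_0}$ for some $p\ge 0$; under rotation these swap colors, contributing $+p$ and $-p$ to the red count respectively, for a net change of zero. You would need to supply this lemma, or an equivalent parity argument, to make your proof work beyond the $0$-reddening case.
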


The Rotation Lemma, together with Lemma \ref{lemmaB} and Corollary \ref{cor:general conjecture 1} imply the Target before Source Conjecture as stated in Corollary \ref{cor:ibeforej}: If there were a maximal green sequence which mutates $j$ before $i$, then there would be a rotation of that sequence which would produce a quiver $Q'$ (not necessarily acyclic) and a maximal green sequence for $Q'$ which has $\mu_j$ as the first mutation. By Lemma \ref{lemmaB} there would still be an arrow $j\to i$ in $Q'$ of infinite type and, by Corollary \ref{cor:general conjecture 1}, this is impossible in a maximal green sequence.

Note that the Target before Source Conjecture is not true if the word ``acyclic'' is removed. However, the Rotation Lemma which holds for any valued quiver, not necessarily acyclic, together with Theorem \ref{thm:ibeforej} implies the following version of the conjecture for general valued quivers. This is based on a suggestion made to us by Greg Muller.

\begin{thm*}[Corollary \ref{cor:general conjecture 1}] Consider any maximal green sequence on any valued quiver $Q$. Then, at each step, the mutation is at a vertex of the mutated quiver $Q'$ which is not the source of any arrow of infinite type.
\end{thm*}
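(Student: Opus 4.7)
The plan is to bootstrap Theorem \ref{thm:ibeforej}, which controls only the \emph{initial} mutation of a maximal green sequence, into a statement that holds at every step along the sequence. The mechanism for this is the Rotation Lemma, which allows us to ``shift'' a maximal green sequence so that any designated step becomes the initial step on an appropriately mutated quiver.

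More precisely, let $(\kay_0,\kay_1,\ldots,\kay_{m-1})$ be a maximal green sequence on $Q$ and write $Q^{(r)}=\mu_{\kay_{r-1}}\cdots\mu_{\kay_0}(Q)$ for the quiver obtained after $r$ mutations, with $Q^{(0)}=Q$. We want to show that for each $r$, the vertex $\kay_r$ is not the source of any arrow of infinite type in $Q^{(r)}$. Suppose for contradiction that this fails for some $r\ge 0$. First, I would apply the Rotation Lemma $r$ times in succession. Each application shifts the current first entry to the end (modified by the inverse of the associated permutation) and replaces the current quiver by its mutation at that entry, while preserving the property of being maximal green. After $r$ iterations we obtain a maximal green sequence on $Q^{(r)}$ whose very first mutation is at $\kay_r$.

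Next, I would invoke Theorem \ref{thm:ibeforej} applied to the valued quiver $Q^{(r)}$. Under our contradiction hypothesis, $\kay_r$ is the source of an arrow of infinite type in $Q^{(r)}$; but that theorem, which is the ``first-step'' version of the Target before Source statement and is valid for arbitrary (possibly non-acyclic) valued quivers, prohibits any maximal green sequence on $Q^{(r)}$ from beginning with a mutation at such a vertex. This contradiction completes the proof.

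The main obstacle is not the reduction itself, which is a routine iteration once the Rotation Lemma is in hand, but rather the content of Theorem \ref{thm:ibeforej}: one must know that even on a non-acyclic valued quiver, the semi-invariant picture argument is strong enough to forbid the initial mutation of a maximal green sequence from occurring at the source of an infinite-type arrow. Provided both ingredients are established, the corollary follows formally. One small bookkeeping point worth checking is that at every intermediate stage the rotation really produces a \emph{maximal green} sequence, not just a reddening one; this is exactly the ``maximal green'' half of the Rotation Lemma as stated, so nothing further is needed.
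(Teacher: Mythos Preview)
Your approach is exactly the paper's: rotate so the suspect step becomes the first mutation, then invoke Theorem~\ref{thm:ibeforej} on the rotated maximal green sequence. There is, however, one small imprecision. You describe Theorem~\ref{thm:ibeforej} as directly ``prohibit[ing] any maximal green sequence on $Q^{(r)}$ from beginning with a mutation at such a vertex.'' That is not quite what the theorem says: its hypothesis is that \emph{both} simple roots $e_i$ and $e_j$ occur in the maximal green tail, and its conclusion is that $e_i$ occurs before $e_j$. After rotation, the first $c$-vector is $e_j$ (since the initial $c$-matrix is $I_n$), but you must also verify that $e_i$ occurs somewhere in the sequence in order to apply the theorem and obtain the contradiction. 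This is precisely the role of Lemma~\ref{lemmaA}, which the paper explicitly invokes at this point: every maximal green sequence mutates at each simple root $e_k$ exactly once. With that one sentence added, your proof is complete and identical to the paper's.
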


\newsavebox{\quivvv}
\savebox{\quivvv}{\begin{minipage}[h]{0.13\linewidth}\begin{tikzpicture}
	\draw (0,0) node[right]{$3\leftarrow 2\leftarrow 1$};
	\begin{scope}
\draw[<-,thick] (1.7,.2)..controls(1.3,.5)and(.7,.5).. (.3,.2);
\draw[<-,thick] (1.75,.3)..controls(1.3,.6)and(.7,.6).. (.25,.3);
\end{scope}
	\end{tikzpicture}\end{minipage}}

\begin{figure}[h]
	\begin{center}
	\newdimen\dd
	\dd=6em

\includegraphics{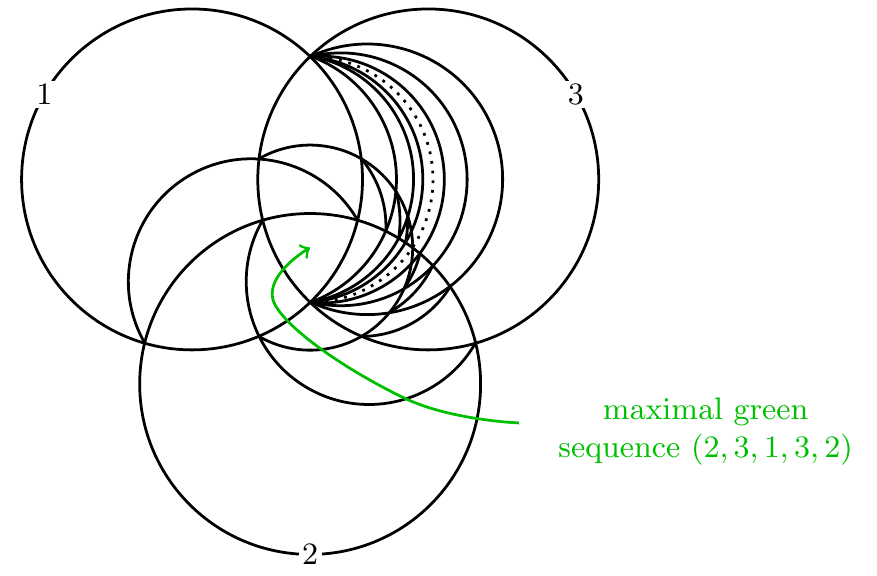}

	\end{center}
	\caption{The Target before Source Conjecture is not true if $Q$ is not acyclic. For the quiver \usebox{\quivvv} there is a double arrow $3\rightrightarrows1$ and a maximal green sequence $(2,3,1,3,2)$ which mutates 3 before 1.}\label{fig02}
\end{figure}

To prove the Finiteness Conjecture, we need to strengthen the theorem of \cite{BDP} to the following theorem.

\begin{thm*}[Theorem \ref{finiteness}]
Let $Q$ be any acyclic tame quiver and let $r\ge0$. Then there are at most finitely many reddening sequences on $Q$ with at most $r$ red mutations.
\end{thm*}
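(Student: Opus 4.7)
The plan is to induct on $r$. The base case $r=0$ is exactly the theorem of \cite{BDP}: an acyclic tame valued quiver admits only finitely many maximal green sequences.

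For the inductive step, assume the statement for $r$ and let $S=(k_0,\ldots,k_{m-1})$ be a reddening sequence on $Q$ with $r+1$ red mutations. First I would record a key structural observation: both $k_0$ and $k_{m-1}$ must be green. At the initial seed the $c$-matrix is the identity, so every $c$-vector is positive and any mutation is green; at the terminal seed every $c$-vector is negative, so arriving there requires flipping a positive $c$-vector to a negative one, which is a green mutation. Consequently, there is a well-defined \emph{last red} index $t$ with $1\le t\le m-2$, and the suffix $(k_{t+1},\ldots,k_{m-1})$ is itself a maximal green sequence from the intermediate seed $[T]$ (reached after $t+1$ mutations of $S$) to the sink.

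The main step is to use the Rotation Lemma (Theorem~\ref{rotation}) to produce an injection from the set of reddening sequences on $Q$ with $r+1$ red mutations into the product of (a) reddening sequences on $Q$ with $r$ red mutations with (b) a finite set of auxiliary data. Applying the Rotation Lemma $t+1$ times transports $S$ to a reddening sequence $\tilde S$ on the mutation-equivalent quiver $Q^{(t+1)}=\mu_{k_t}\cdots\mu_{k_0}Q$ of the same length and with the same number $r+1$ of red mutations. The rotated sequence $\tilde S$ now begins with the green suffix of $S$, which is a maximal green sequence on $Q^{(t+1)}$, and ends with the rotated image of the prefix of $S$ (including the image of the last red mutation $k_t$, tracked as auxiliary data). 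From this decomposition we extract a pair consisting of the green tail, the vertex $k_t$ and the intermediate seed $[T]$ (all finite in number by \cite{BDP} applied to the appropriate acyclic mutation-equivalent quiver, together with the inductive bound on the initial segment), and a sub-reddening sequence whose rotated image is a reddening sequence on $Q$ with only $r$ red mutations, bounded by the inductive hypothesis.

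The principal obstacle is that the quivers $Q^{(t+1)}$ appearing in the intermediate rotations are in general not acyclic, so one cannot apply \cite{BDP} directly to them. We handle this by iterating the Rotation Lemma: each rotation preserves the number of red mutations and keeps us inside the mutation class of $Q$, and by rotating sufficiently many times we can always return the analysis to reddening sequences on the original acyclic tame quiver $Q$, where both the inductive hypothesis and the base case of \cite{BDP} apply. Carefully tracking the permutation $\sigma$ and the intermediate seeds through these rotations then yields the required injection and completes the induction.
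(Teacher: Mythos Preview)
Your inductive scheme has a genuine gap at its core: the inductive hypothesis does not control the quantity you actually need. You want to decompose an $(r+1)$-reddening sequence as a prefix $(k_0,\ldots,k_t)$ ending at an intermediate seed $[T]$, followed by a green tail to the sink. But neither piece is a reddening sequence in the sense of the inductive hypothesis. The prefix goes from the source to $[T]$, not to the sink; the inductive hypothesis (finitely many $r$-reddening sequences) says nothing about how many seeds $[T]$ are reachable this way. Indeed, already for $r=0$ there are \emph{infinitely many} seeds reachable by purely green paths from the source in the tame case (just walk into the preprojective component), so ``the inductive bound on the initial segment'' does not exist. Likewise the green tail is a green path in $Q$'s framing from $[T]$ to $\Lambda$; it is not a maximal green sequence on $Q^{(t+1)}$ in the usual sense, because rotation only preserves the \emph{number} of red mutations, not which individual mutations are red (see the proof of the Rotation Lemma: colors flip precisely at $c$-vectors $\pm e_{k_0}$), and the sink of the exchange graph in $Q^{(t+1)}$'s framing is not $\Lambda$ in general. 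So you cannot invoke \cite{BDP} to bound the tails either. The closing appeal to ``iterating the Rotation Lemma \ldots\ to return the analysis to $Q$'' is circular: after $m$ rotations you recover $S$ itself, and at no intermediate stage do you obtain an $r$-reddening sequence on $Q$.

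The paper's argument is of a completely different nature and does not proceed by induction on $r$. It uses the semi-invariant picture for the tame hereditary algebra: one defines nested regions $\cV_k,\cW_k\subset\RR^n$ cut out by the preinjective and preprojective roots, and shows two things. First (Proposition~\ref{second proposition}), any reddening sequence that meets $\cV_{rm}\backslash\cW_{rm}$ must have at least $r$ red mutations; this is where tameness enters, via the null root $\eta$ and the defect formula $\tau^m\alpha=\alpha+\delta(\alpha)\eta$. Second (Lemma~\ref{lem: only finitely many roots outside V-W} and Lemma~\ref{lemmaC}), for each fixed $k$ only finitely many clusters lie outside $\cV_k\backslash\cW_k$, and an $r$-reddening sequence visits any cluster at most $r+1$ times; hence only finitely many $r$-reddening sequences avoid $\cV_k\backslash\cW_k$. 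Combining these with $k=(r+1)m$ gives the result. The essential content---that an $r$-reddening sequence is confined to a finite set of clusters---has no analogue in your proposal and is precisely what your induction would need but cannot supply.
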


To see that this theorem implies the Finiteness Conjecture, choose a fixed mutation sequence $(j_1,\dots,j_r)$ from the acyclic tame quiver $Q$ to a mutation equivalent quiver $Q'$ which might not be acyclic. For each maximal green sequence $(\kay_0,\dots,\kay_{m-1})$ on $Q'$ we associate a reddening sequence $(j_r,\dots, j_1,j_1,\dots,j_r,\kay_0,\dots,\kay_{m-1})$ on $Q'$ with exactly $r$ red mutations. Then, by applying the Rotation Lemma $r$ times, we see that
\[
	(j_1,\dots,j_r,\kay_0,\dots,\kay_{m-1},\sigma^{-1}(j_r),\dots,\sigma^{-1}(j_1))
\]
is a reddening sequence on $Q$ with exactly $r$ red mutations. By the theorem, there are at most finitely many such sequences. Therefore, $Q'$ has at most finitely many maximal green sequences.

A motivation for the Target before Source and Finiteness Conjectures comes from the physics literature: In \cite{ACCERV} an explicit method is given to construct maximal green sequences for all quivers stemming from a triangulation of a surface. They cut the surface into manageable pieces and then glue the quiver together according to a rule that implicitly uses the Target before Source Conjecture: the setup is such that automatically the maximal green sequence would first mutate the target of a multiple arrow.
In Section 4.2 of \cite{X}, the Target before Source Conjecture is explicitly formulated by saying if there are two green vertices connected by double arrows, then one cannot mutate the source in order to obtain a finite chamber. The Rotation Lemma is also suggested by Figure 11 in \cite{X}.


\section{Preliminaries}\label{sec: Preliminaries}

The context of this work is that of cluster algebras. Later on, we will restrict to the acyclic case. We will use the following notation and definitions throughout the paper. 

An $n\times n$ integer matrix $B$ is called \emph{skew-symmetrizable} if there is a diagonal matrix $D$ with positive integer diagonal entries so that $DB$ is skew-symmetric. We recall the notion of matrix mutation, given in \cite{FZ} as an ingredient of the definition of cluster algebras:

\begin{defn} For any $m \times n$ matrix $\widetilde B=(b_{ij})$ with $m \ge n$, and any $1\le k\le n$, the \emph{mutation $\mu_k \widetilde B$ of $\widetilde B$ in the $k$-direction} is defined to be the matrix $\widetilde B'=(b_{ij}')$ given by
\begin{equation}\label{eq: mutation of B}
	b_{ij}'=\begin{cases}  -b_{ij}& \text{if } i=k \text{ or } j=k\\
   b_{ij}+b_{ik}|b_{kj}| & \text{if } b_{ik}b_{kj}>0\\
  b_{ij} & \text{otherwise}
    \end{cases}
\end{equation}
For any finite sequence of positive integers $1 \le k_1,k_2,\cdots,k_r\le n$ we have the \emph{iterated mutation} $\mu_{k_r}\cdots \mu_{k_1}\widetilde B$ of $\widetilde B$.
\end{defn}

A \emph{valued quiver} is a (not necessarily acyclic) quiver $Q$ with set of vertices $Q_0$ and set of arrows $Q_1$ endowed with positive integer valuations on arrows as $ i\xrightarrow{(d_{ij},d_{ji})} j $ and a positive integer valuation $f_i $ for each vertex $i\in Q_0$ satisfying $d_{ij}f_j=d_{ji}f_i$.
The associated Euler matrix $E=(E_{ij})$ of this quiver is given by $E_{ii} = f_i$ for all $i\in Q_0$ and $E_{ij} = - d_{ij}$ for each arrow $ i\xrightarrow{(d_{ij},d_{ji})} j $ in $Q_1$.
The associated skew-symmetrizable matrix $B$ is determined by the equation $DB=E^t\!-\!E$ where $D$ is the diagonal matrix with entries $f_i$ on the diagonal. Conversely, given $B$ and $D$, the valued quiver $Q$ is given as follows. There is an arrow $i\to j$ in $Q$ whenever $b_{ij}>0$ with valuation $(-b_{ji},b_{ij})$. The integer valuations $f_i$ are the diagonal entries of $D$. 

We refer to \cite{DR} or \cite{IOTW2} for more details on valued quivers, and their relation to modulated quivers and their representations in the acyclic case. For the general case, see \emph{e.g.}, \cite{N}.

\begin{eg} \label{example}
We use the following example of a valued quiver $Q$ to illustrate some of the definitions:
\[
3\xrightarrow{(3,1)} 2 \xrightarrow{(1,1)} 1 \text { and } \ \  f_3=3,\  f_2=1,\  f_1=1.
\]

The Euler matrix $E$, the skew-symmetrizable matrix $B$ and the diagonal matrix $D$ in this case will be:

\[
E=\left[\begin{matrix}1&0&0\\
-1&1&0\\
0&-3&3
\end{matrix}\right] \qquad 
B=\left[\begin{matrix}0&-1&0\\
1&0&-3\\
0&1&0
\end{matrix}\right]
\qquad D=\left[\begin{matrix}1&0&0\\
0&1&0\\
0&0&3
\end{matrix}\right].
\]
\end{eg}

The construction of the skew-symmetrizable matrix $B$ associated to a valued quiver $Q$ deletes all loops and oriented two-cycles in $Q$, so we will assume that $Q$ has no oriented cycles of length one or two. In fact, the construction above induces a bijection between skew-symmetrizable integer matrices and valued quivers  without loops and oriented two-cycles. Mutation of the matrices $B$ corresponds to quiver mutation.

We recall from \cite{FZ} that the skew-symmetrizable matrix $B$ determines a family of matrices called \emph{$c$-matrices} whose columns are called \emph{$c$-vectors}: If $B$ is of size $n\times n$ we form the $(2n)\times n$ extended matrix $\widetilde{B}=\mat{B\\I_n}$ by appending the $n\times n$ identity matrix $I_n$ to the bottom of $B$. A $c$-matrix is then any $n\times n$ matrix $C$ appearing as the bottom half of some $\widetilde{B}'=\mat {B'\\ C}$ obtained from $\widetilde{B}$ by successive mutations. If we denote by $Q'$ the quiver obtained from $Q$ by the same sequence of mutations, that is, $Q'$ is the valued quiver corresponding to $B'$, we may call the matrix $C$ the $c$-matrix of $Q'$. The corresponding $c$-vectors are naturally in bijection with the vertices of $Q'$. It should be noted that strictly speaking, the $c$-matrices and $c$-vectors depend on the sequence of mutations taken to arrive at $Q'$, and not directly on $Q'$ itself, see  \cite{FZ} or  \cite{NZ} for details. This paper is concerned precisely with sequences of mutations and the associated $c$-matrices and $c$-vectors.

A vertex of $Q'$ is \emph{green} (resp. \emph{red}) if all entries of the corresponding $c$-vector are nonnegative (resp. nonpositive). A mutation sequence $(k_0,k_1,\dots, k_{m-1})$ is a \emph{maximal green sequence} if each vertex $k_s$ is green in $Q_s=\mu_{k_{s-1}}\cdots\mu_{k_0}Q$ and moreover all vertices of $Q'=Q_{m}$ are red.

\begin{rem}
A skew-symmetrizable integer matrix is called sign coherent if the entries of any $c$-vector are either all nonnegative or all nonpositive, \emph{i.e.}, are either green or red. Generalizing results obtained earlier for skew-symmetric matrices \cite{DWZ}, it has recently been shown in \cite{GHKK} that every skew-symmetrizable integer matrix is sign coherent. A proof for acyclic valued quivers is also given in \cite{IOTW2}.
Thus, any vertex of the quiver  $Q'$ above is either green or red. 

It should be noted that the determinant of the $c$-matrix is always $\pm1$ since the mutation $\mu_k$ changes the sign of the $k$-th column and adds integer multiples of this column to certain other columns. Also, by \cite{NZ}, the $c$-matrix $C$ determines $B'$ by the equation: $DB'=C^tDBC$.
\end{rem}

Here we work with the more general notion of reddening sequences introduced in \cite{Keller}.

\begin{defn}\label{def:reddening}
A \emph{reddening sequence} is a mutation sequence $(k_0,k_1,\dots, k_{m-1})$ in which all vertices of the final quiver $Q_{m}$ are red. It will sometimes be convenient to call a reddening sequence with exactly $r$ mutations at red vertices an \emph{$r$-reddening sequence}. In particular, a $0$-reddening sequence is just a maximal green sequence.
\end{defn}


\section{Rotation Lemma for Reddening Sequences}

In order to prove the Rotation Lemma we need  precise formulas which relate the sequence of $c$-matrices obtained by a mutation sequence $(\kay_0,\kay_1,\dots,\kay_m)$ of a skew-symmetrizable matrix $B$ and the sequence of $c$-matrices obtained by the mutation sequence $(\kay_1,\kay_2,\dots,\kay_m)$ of the skew-symmetrizable matrix $\mu_{\kay_0}B$. In both cases the initial $c$-matrix is the identity matrix $I=I_n$.

\subsection{Mutation formula}
In Theorem \ref{formula} we give a formula which relates $c$-vectors of a skew-symmetrizable matrix and its once-mutated matrix. This formula corresponds to the identity given in proposition 1.4  of \cite{NZ}, but we give here a different version of computing the signs. Our proof uses the concept of  $k$-hemispheres which we will also need later on. We also use the notion of $g$-matrices and  sign-consistency of $g$-vectors. For our purpose it is not necessary to introduce the definition of a  $g$-matrix, we rather use the result of Nakanishi and Zelevinsky in \cite{NZ} that the $g$-matrix $G$, which by its original definition \cite{FZ} is an integer matrix, is related to the $c$-matrix $C$ as $G=(DC^{-1}D^{-1})^t$. The sign-coherence of $c$-vectors implies by \cite{NZ} that $g$-vectors are sign-consistent, \emph{i.e.}, the rows of the $g$-matrix $G$ are sign coherent. 
Of particular importance is the case when a $c$-vector is a simple root, that is of the form $\pm e_k$ where $e_k$ denotes the $k$-th standard vector.
We show in  the following lemma that the occurrence of a simple root implies some condition on the valuations $f_i$ of the vertices.

\begin{lem}\label{lem: CD=DC}
If the $j$-th column of a $c$-matrix $C$ is $\pm e_k$ then $f_k=f_j$. In particular, if $C$ is a permutation matrix or negative permutation matrix then $CD=DC$.
\end{lem}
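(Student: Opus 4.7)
The plan is to invoke the Nakanishi--Zelevinsky duality $G=(DC^{-1}D^{-1})^t$ between the $c$-matrix $C$ and the $g$-matrix $G$ of a mutated seed, which the authors have already recalled. This immediately rearranges (using that $D$ is diagonal) to the clean identity
\[
DC=(G^{-1})^{t}\,D,
\]
so the $(i,j)$ entry reads $f_i\, C_{ij}=(G^{-1})_{ji}\,f_j$.

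Now I would plug in the hypothesis: the $j$-th column of $C$ equals $\pm e_k$, i.e.\ $C_{ij}=\pm\delta_{ik}$. For $i\neq k$ this forces $(G^{-1})_{ji}=0$, and for $i=k$ it gives $(G^{-1})_{jk}=\pm f_k/f_j$. In other words, the $j$-th row of $G^{-1}$ is $\pm(f_k/f_j)\,e_k^{\,t}$. Since $C$ is integral with $\det C=\pm 1$, the matrix $G=(DC^{-1}D^{-1})^t$ is also integral (the conjugation by $D$ is just row/column scaling that cancels in the formula producing $G$), and $\det G=\pm 1$, so $G^{-1}$ is integral as well. In particular $f_k/f_j$ is a positive integer.

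The symmetric bound comes from the determinant. Expanding $\det G^{-1}$ along the $j$-th row, which has only the single nonzero entry $\pm f_k/f_j$, one obtains
\[
\pm 1=\det G^{-1}=\pm\tfrac{f_k}{f_j}\,M_{jk},
\]
where $M_{jk}$ is the corresponding minor of $G^{-1}$, an integer. Thus $f_k/f_j$ is a positive integer dividing $1$, i.e.\ $f_j=f_k$, which is exactly the first claim. I do not expect any serious obstacle here; the only subtle point is making sure the integrality of $G^{-1}$ is correctly inferred from the formula and the fact that $\det C=\pm 1$.

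For the moreover clause, suppose $C=\pm P_\sigma$ is a (signed) permutation matrix, so that each column $j$ is $\pm e_{\sigma(j)}$. Applying the first part column by column yields $f_{\sigma(j)}=f_j$ for every $j$. Then
\[
(DC)_{ij}=f_i C_{ij}=\pm f_i\delta_{i,\sigma(j)}=\pm f_{\sigma(j)}\delta_{i,\sigma(j)}=\pm f_j\delta_{i,\sigma(j)}=C_{ij}f_j=(CD)_{ij},
\]
so $DC=CD$, completing the proof.
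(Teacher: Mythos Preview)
Your argument is correct and essentially identical to the paper's: both use the Nakanishi--Zelevinsky identity $(G^t)^{-1}=DCD^{-1}$ to see that the $j$-th column (equivalently, the $j$-th row of $G^{-1}$) has the single nonzero entry $\pm f_k/f_j$, then use integrality of $G^{-1}$ and $\det G^{-1}=\pm1$ to force $f_k/f_j=1$. One small caution: your parenthetical that integrality of $G$ follows because ``the conjugation by $D$ \dots cancels in the formula'' is not a valid general argument---the entries of $DC^{-1}D^{-1}$ are $f_i(C^{-1})_{ij}/f_j$, which need not be integers a priori; rather, integrality of $G$ is a known fact from its original definition in \cite{FZ}, already recorded in the paper, and that is what both proofs are actually invoking.
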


\begin{proof}
Let $G$ be the corresponding $g$-matrix. Then $G$ is an integer matrix with determinant $\pm1$ and $(G^t)^{-1}=DCD^{-1}$. The $j$-th column of the latter matrix is $\pm f_kf_j^{-1}e_k$. The coefficient $\pm f_kf_j^{-1}$ is an integer which divides $\det C=\pm1$. Therefore, $f_k=f_j$.
\end{proof}

Let $B=B_0$ be a skew-symmetrizable matrix and let $\widetilde B_0=\mat{B_0\\I_n}$ be the extended matrix with initial $c$-matrix $C_0=I=I_n$. Consider the two sequences of mutations, both closely related to the matrix $B$:

\begin{equation}\label{ast}
\begin{aligned}
\widetilde B_0=\mat{B_0\\I}\xrightarrow{\mu_{\kay_0}}\mat{B_1\\C_1}\xrightarrow{\mu_{\kay_1}} &\mat{B_2\\C_2} \xrightarrow{\mu_{\kay_2}} \dots
\xrightarrow{\mu_{\kay_{m-1}}} \mat{B_m\\C_m},  \text { and} \\
\widetilde B'_1=\mat{B_1\\I}\xrightarrow{\mu_{\kay_1}} &\mat{B_2\\C'_2}\xrightarrow{\mu_{\kay_2}}
 \dots
\xrightarrow{\mu_{\kay_{m-1}}} \mat{B_m\\C'_m}.
\end{aligned}
\end{equation}

For each $\mutindex\geq 1$ we will express the $c$-matrix $C'_\mutindex$ in terms of the $c$-matrix $C_\mutindex$. For this it is convenient to write matrix mutation in terms of column operations given by multiplication with the following matrices $X^+_j$, $X^-_j$.

\begin{defn}\label{Xmatrix}
Let $B$ be an $n\times n$ skew-symmetrizable matrix. For $\varepsilon=\pm$ define $X_j^\varepsilon$ to be the matrix equal to the identity matrix $I_n$ except for its $j$-th row which is given by
 \[
 	(X_j^\varepsilon)_{jk}=\begin{cases} -1 & \text{if } k=j\\
   \max(\varepsilon b_{jk},0) & \text{if } k\neq j
    \end{cases}
 \]
\end{defn}

\begin{eg} For the quiver $ 3\xrightarrow{(3,1)} 2 \xrightarrow{(1,1)} 1 \text { with } f_3\!=\!3,\  f_2\!=\!1,\  f_1\!=\!1$ as in Example \ref{example} with matrix $B$, the matrices $X_2^+$ and  $X_2^-$ are:
\[
B=\left[\begin{matrix}0&-1&0\\
1&0&-3\\
0&1&0
\end{matrix}\right]
\qquad
X_2^+=\left[\begin{matrix}1&0&0\\
1&-1&0\\
0&0&1
\end{matrix}\right]
\qquad 
X_2^-=\left[\begin{matrix}1&0&0\\
0&-1&3\\
0&0&1
\end{matrix}\right].
\qquad 
\]
 \end{eg}
 
 We state without proof some of the basic properties of these matrices.  
 
\begin{lem} \label{XX} Let $B$ be an $n\times n$ skew-symmetrizable matrix. Then:
\begin{enumerate}
\item $X_j^+X_j^+=I_n=X_j^-X_j^-$ for $j\in\{1,2,\dots,n\}$, that is, mutation is an involution,
\item $X_j^+X_j^-=I_n+J_jB$,  where $J_j$ is the diagonal matrix with $d_{jj}=1$ and $d_{ii}=0$ for  all $i\neq j$.\qed
\end{enumerate}
\end{lem}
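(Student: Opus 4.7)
The plan is to prove both identities by direct row-by-row matrix multiplication, exploiting the fact that $X_j^{\varepsilon}$ differs from $I_n$ only in row $j$. Since the $i$-th row of $X_j^{\varepsilon}$ equals $e_i^T$ for every $i\neq j$, in any product $X_j^{\varepsilon}X_j^{\delta}$ the $i$-th row ($i\neq j$) is automatically the $i$-th row of the second factor, which is $e_i^T$. So in each case everything reduces to computing row $j$ of the product.

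For Part (1), fix $\varepsilon\in\{+,-\}$ and set $M = X_j^{\varepsilon} X_j^{\varepsilon}$. Row $j$ of the first factor is $-e_j^T + \sum_{\ell\neq j}\max(\varepsilon b_{j\ell},0)\,e_\ell^T$, so
\[
M_{jk} = (-1)(X_j^{\varepsilon})_{jk} + \sum_{\ell\neq j}\max(\varepsilon b_{j\ell},0)\,\delta_{\ell k}.
\]
Splitting on $k = j$ versus $k\neq j$ gives $M_{jj} = (-1)(-1) = 1$ and, for $k\neq j$, $M_{jk} = -\max(\varepsilon b_{jk},0) + \max(\varepsilon b_{jk},0) = 0$. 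Hence $M = I_n$.

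For Part (2), the same reduction leaves only row $j$ of $X_j^+ X_j^-$ to check. For $k\neq j$,
\[
(X_j^+ X_j^-)_{jk} = (-1)\max(-b_{jk},0) + \max(b_{jk},0) = b_{jk},
\]
using the elementary identity $\max(b,0) - \max(-b,0) = b$. For $k=j$, $(X_j^+ X_j^-)_{jj} = (-1)(-1) = 1$. On the other side, $J_j$ has a single nonzero entry in position $(j,j)$, so $J_j B$ agrees with $B$ in row $j$ and is zero elsewhere; thus $(I_n + J_j B)_{jj} = 1 + b_{jj}$ and $(I_n + J_j B)_{jk} = b_{jk}$ for $k\neq j$. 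Skew-symmetrizability forces $b_{jj} = 0$, so the two rows match.

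No step is expected to cause trouble: the whole argument is routine bookkeeping once one notices that $X_j^{\pm}$ is a single-row modification of $I_n$, and the only arithmetic input is the sign-splitting identity $\max(b,0) - \max(-b,0) = b$. This is presumably why the authors leave the lemma with a $\qed$ attached to the statement.
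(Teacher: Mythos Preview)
Your proof is correct and is exactly the kind of direct entrywise verification the authors have in mind; the paper explicitly states this lemma without proof (note the \qed\ on the statement and the preceding sentence ``We state without proof some of the basic properties of these matrices''), so there is nothing further to compare.
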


\begin{lem}\label{mujC}
 Let $B$ be a skew-symmetrizable matrix,  $\widetilde B=\mat{B\\C}$ and 
 $j\in\{1,\dots,n\}$. Let $\mu_jB$  and $\mu_jC$ be the mutated matrices $B$ and $C$ in the direction of the $j$-th column. Then: 
\begin{enumerate}
\item $\mu_jC= C X_j^+$ if the $j$-th column of the matrix $C$ has entries $\geq 0$ and \\$\mu_jC= C X_j^-$ if the $j$-th column of the matrix $C$ has entries $\leq 0$.
\item $\mu_jI=  X_j^+$  for $j\in\{1,\dots,n\}$.
\item The $g$-matrices $G,\mu_jG$ corresponding to $C,\mu_j G$ differ only in their $j$-th columns.
\end{enumerate}
\end{lem}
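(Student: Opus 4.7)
The plan is to prove parts (1) and (2) by direct unwinding of the mutation formula \eqref{eq: mutation of B} applied to $C$, and then derive part (3) from part (1) via the identification $G = (DC^{-1}D^{-1})^t$ recalled in the preceding paragraph.

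For part (1), I would fix the sign $\varepsilon = \pm$ matching the $j$-th column of $C$ and check the identity $\mu_j C = CX_j^\varepsilon$ one column at a time. The $j$-th column of $X_j^\varepsilon$ is $-e_j$ (all off-diagonal entries of $X_j^\varepsilon$ sit in its $j$-th row, and $(X_j^\varepsilon)_{jj}=-1$), so the $j$-th column of $CX_j^\varepsilon$ is $-c_{*j}$, matching the rule $c_{ij}'=-c_{ij}$. For $l\ne j$, only the $(j,l)$ entry of $X_j^\varepsilon$ contributes off the diagonal, giving $(CX_j^\varepsilon)_{il} = c_{il} + c_{ij}\,\max(\varepsilon b_{jl},0)$. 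The sign assumption $\varepsilon c_{ij}\ge 0$ then lets one check case-by-case that $c_{ij}\,\max(\varepsilon b_{jl},0)$ equals $c_{ij}|b_{jl}|$ precisely when $c_{ij}b_{jl}>0$ and equals $0$ otherwise, matching the piecewise formula for $c_{il}'$. Part (2) is immediate from (1), since the $j$-th column of $I$ is $e_j\ge 0$, whence $\mu_j I = I\cdot X_j^+ = X_j^+$.

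For part (3), I would express $\mu_j G$ using the identification and simplify. Using $(X_j^\varepsilon)^{-1}=X_j^\varepsilon$ from Lemma \ref{XX}(1),
\[
\mu_j G \;=\; \bigl(D(\mu_j C)^{-1}D^{-1}\bigr)^t \;=\; \bigl(D\,X_j^\varepsilon\,C^{-1}D^{-1}\bigr)^t \;=\; G\cdot M,\qquad M:=D^{-1}(X_j^\varepsilon)^t D.
\]
Now $X_j^\varepsilon$ differs from $I_n$ only in its $j$-th row, so $(X_j^\varepsilon)^t$ differs from $I_n$ only in its $j$-th column, and diagonal conjugation by $D$ preserves this off-diagonal zero pattern. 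Hence $M$ agrees with $I_n$ outside its $j$-th column, so $GM$ and $G$ can differ only in their $j$-th columns.

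The only real obstacle is the careful sign bookkeeping in part (1): one must verify that the expression $c_{ij}\max(\varepsilon b_{jl},0)$ reproduces exactly the three-branch rule $c_{ij}|b_{jl}|$ or $0$ from \eqref{eq: mutation of B}, and it is precisely here that the sign-coherence hypothesis on the $j$-th column of $C$ enters. Once that case check is done, (2) is a single substitution and (3) is the short matrix manipulation above.
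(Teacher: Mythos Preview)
Your proposal is correct and follows essentially the same approach as the paper: the paper dismisses (1) as ``follows directly from the definition of matrix mutation'' (citing also \cite{NZ}), derives (2) from (1), and for (3) computes $D^{-1}(\mu_jG)^tD=X_j^\varepsilon C^{-1}$, noting this differs from $C^{-1}=D^{-1}G^tD$ only in its $j$-th row. Your version simply spells out the column-by-column verification for (1) that the paper omits, and for (3) you take the transpose before rather than after the key observation, arriving at $\mu_jG=G\cdot D^{-1}(X_j^\varepsilon)^tD$; both routes are the same short manipulation.
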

\begin{proof}
(1) follows directly from the definition of matrix mutation. It is also formulated in proposition 1.3 of \cite{NZ}. (2) follows from (1). (3) follows from (1) since
\[
	D^{-1}(\mu_jG)^tD=(\mu_jC)^{-1}=(CX_j^{\varepsilon})^{-1}=(X_j^\varepsilon)^{-1} C^{-1}=X_j^\varepsilon C^{-1}
\]
which differs from $C^{-1}=D^{-1}G^tD$ only in its $j$-th row.
\end{proof}

Sign consistency of $g$-vectors means that, for any fixed $\widetilde B_s$, the $\kay$-th coordinate of the corresponding $g$-vectors all have the same sign, \emph{i.e.} the $k$-th row of the $g$-matrix $G$ is sign coherent for each $k\in\{1,\dots,n\}$.
We use this fact in the following definition.

\begin{defn}
Let $k\in \{1,2,\dots, n\}$. Define $H_{k}^+$ to be the set of all $c$-matrices $C$ so that the corresponding $g$-vectors have $k$-th coordinate $\ge0$. Let $H_{k}^-$ be the other $c$-matrices (whose $g$-vectors have nonpositive $k$-th coordinates). We call $H_{k}^+,H_{k}^-$ the \emph{$k$-hemispheres}.
\end{defn}

Of course one could define in the same way hemispheres using the $c$-vectors, the difference lies in the index: if $C$ has its $j$-th column $\ge 0$, then the corresponding $g$-matrix $G$ has its $k$-th row $\ge 0$, for some index $k$. It turns out to be more convenient following the index $k$. 
A case of particular importance is when the $j$-th column of $C$ is $\pm e_k$, as discussed in Lemma \ref{lem: CD=DC}. 
The following  lemma gives a precise description when a $c$-matrix can change hemispheres through mutation, it turns out that this is possible only when mutating at a simple root. It applies to any $k\in \{1,2,\dots, n\}$, but we mainly use it for $k=\kay_0$, \emph{i.e.}, for the $c$-vector of the first mutation in the  sequence of mutations (\ref{ast}).

\begin{lem}\label{hemispheres} Let $C$ be a $c$-matrix and $G$ the corresponding $g$-matrix.
\begin{enumerate}
\item If any $c$-vector in the matrix $C$ is $e_{k}$ (resp. $-e_k$) then the $k$-th row in $G$ has all entries $\geq 0$ (resp. $\le0$), hence the matrix $C$ is in $H_{k}^+$ (resp. $H_k^-$).
\item The matrices $C$ and $\mu_jC$ are in different $k$-hemispheres if and only if the $j$-th vector in $C$ is $\pm e_{k}$.
\end{enumerate}
\end{lem}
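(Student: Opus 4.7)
The plan is to work throughout via the duality $G^t=DC^{-1}D^{-1}$ from \cite{NZ}, which exchanges the $j$-th column of $C$ (a $c$-vector) with the $k$-th row of $G$ (the collection of $k$-th coordinates of all $g$-vectors). In particular, the ``$k$-hemisphere'' condition, which is a statement about the $k$-th row of $G$, can be translated into a statement about a single column of $C$.

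For part (1), I would suppose the $j$-th column of $C$ is $\varepsilon e_k$ with $\varepsilon\in\{\pm1\}$. Then $Ce_j=\varepsilon e_k$ inverts to $C^{-1}e_k=\varepsilon e_j$, and the duality $C^{-1}=D^{-1}G^tD$ yields $G^te_k=\varepsilon(f_j/f_k)e_j$. Hence the $k$-th row of $G$ has a single nonzero entry $\varepsilon(f_j/f_k)$ in position $j$; integrality of $G$ combined with $|\det G|=1$ (expanding along the $k$-th row) forces this entry to be $\pm 1$, which with $f_j,f_k>0$ gives $f_j=f_k$ (recovering Lemma~\ref{lem: CD=DC}) and identifies the $k$-th row as exactly $\varepsilon e_j^t$. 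Its entries all have sign $\varepsilon$, so $C\in H_k^\varepsilon$.

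The ``if'' direction of part (2) then follows immediately: matrix mutation flips the sign of the $j$-th column of $C$, so if that column is $\varepsilon e_k$ in $C$ it becomes $-\varepsilon e_k$ in $\mu_jC$, and applying part (1) to each of $C$ and $\mu_jC$ places them in opposite $k$-hemispheres.

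The main effort is the ``only if'' direction, and the key input is Lemma~\ref{mujC}(3): $G$ and $\mu_jG$ agree except in column $j$, so their $k$-th rows agree except at position $j$. Assuming the two matrices lie in opposite $k$-hemispheres, sign coherence forces every entry of the $k$-th row of $G$ outside position $j$ to be simultaneously $\geq 0$ and $\leq 0$, hence zero; since $\det G=\pm 1$ no row of $G$ can vanish entirely, so the entry at position $j$ is nonzero; and expanding $\det G$ along the $k$-th row pins this entry at $\pm 1$. Thus the $k$-th row of $G$ is $\pm e_j^t$. Reversing the duality calculation of part (1) (using $C^{-1}=D^{-1}G^tD$ and unimodularity of $C$ to absorb the factor $f_j/f_k$) then shows the $j$-th column of $C$ is $\pm e_k$. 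The main bookkeeping obstacle is tracking the rational factor $f_j/f_k$ introduced by conjugation by $D$; in every instance it will be eliminated by the unimodularity of $C$ or $G$.
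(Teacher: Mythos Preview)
Your proof is correct and follows essentially the same route as the paper: both exploit the Nakanishi--Zelevinsky duality $G^t=DC^{-1}D^{-1}$ together with Lemma~\ref{mujC}(3). The one noteworthy difference is in part~(1): the paper extracts only the single entry $G_{k,j}=\pm1$ from the identity $G^tDC=D$ and then invokes sign-consistency of $g$-vectors to conclude that the entire $k$-th row of $G$ has that sign, whereas your computation of $G^t e_k$ shows directly that the $k$-th row of $G$ equals $\varepsilon e_j^t$, i.e.\ has a \emph{single} nonzero entry. Your argument is therefore slightly more self-contained in that step (it does not need sign-consistency for part~(1)) and already produces the stronger intermediate fact that both proofs use in the converse direction of part~(2). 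For part~(2) the two arguments are effectively identical; the paper passes from the $k$-th row of $G$ to the $j$-th row of $G^{-1}$ and then to the $j$-th column of $C=(DG^{-1}D^{-1})^t$, while you pass through $C^{-1}=D^{-1}G^tD$ and then invert---these are the same duality read two ways, and your explicit tracking of the factor $f_j/f_k$ and its elimination by unimodularity makes precise a step the paper leaves implicit.
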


\begin{proof} (1) Suppose that the $j$-th column of the matrix $C$ is $\pm e_k$. Using the equation $G^tDC=D$ from \cite{NZ}, we see that the $(k,j)$ entry of the matrix $G$ must be $\pm1$. Therefore, by sign-consistency of $g$-vectors, the $k$-th row of $G$ must have the same sign. This proves (1). \\
(2) Furthermore, the $j$-th column of $\mu_jC$ will be $\mp e_k$ and the sign of the $k$-th row of $\mu_jG$ will be the opposite of that of $G$. This proves the implication $(\Leftarrow)$. Conversely, suppose that $C,\mu_jC$ are in opposite $k$-hemispheres. Then the $k$-th rows of $G,\mu_jG$ have opposite sign. By Lemma \ref{mujC}(3), $G,\mu_jG$ differ only in their $j$-th columns. So, the $k$-th rows of $G$ and $\mu_jG$ can have only one nonzero entry in position $(k,j)$. Then the $j$-th row of $G^{-1}$ has only one nonzero entry in position $(j,k)$. So, $C=(DG^{-1}D^{-1})^t$ has only one nonzero entry in its $j$-th column in position $(k,j)$, \emph{i.e.}, the $j$-th column of $C$ is $\pm e_k$.
\end{proof}

\begin{thm} [Mutation formula]  \label{formula} Let $B=B_0$ be a skew-symmetrizable matrix. Consider the two sequences of mutations in \eqref{ast}.

Then for all $\mutindex\ge0$ we have: 

\[
C'_\mutindex=X_{\kay_0}^{\varepsilon(\mutindex)}C_\mutindex \qquad\qquad \text{ where }  \qquad\qquad 
{\varepsilon(\mutindex)=\begin{cases}+ & \text{if } C_\mutindex\in H_{\kay_0}^-\\
   - & \text{if } C_\mutindex\in H_{\kay_0}^+
   \end{cases}}.
\]
  
\end{thm}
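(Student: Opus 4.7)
I would proceed by induction on $\mutindex$. The base case $\mutindex=1$ is quick: Lemma \ref{mujC}(2) gives $C_1=X_{\kay_0}^+$ and by construction $C'_1=I$; Definition \ref{Xmatrix} shows the $\kay_0$-th column of $X_{\kay_0}^+$ is $-e_{\kay_0}$, so Lemma \ref{hemispheres}(1) places $C_1\in H_{\kay_0}^-$ and hence $\varepsilon(1)=+$, reducing the claim to the involution identity $X_{\kay_0}^+X_{\kay_0}^+=I$ of Lemma \ref{XX}(1). The case $\mutindex=0$ is handled analogously under the convention $C'_0=X_{\kay_0}^-$.

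For the inductive step, assume $C'_{\mutindex}=X_{\kay_0}^{\varepsilon(\mutindex)}C_{\mutindex}$. By Lemma \ref{mujC}(1), the next mutations act by right multiplication $C_{\mutindex+1}=C_{\mutindex}X_{\kay_{\mutindex}}^{\delta_{\mutindex}}$ and $C'_{\mutindex+1}=C'_{\mutindex}X_{\kay_{\mutindex}}^{\delta'_{\mutindex}}$, where $\delta_{\mutindex},\delta'_{\mutindex}\in\{+,-\}$ are the signs of the $\kay_{\mutindex}$-th columns of $C_{\mutindex}$ and $C'_{\mutindex}$ respectively. Lemma \ref{hemispheres}(2) gives a natural case split.

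In the easy case, assume the $\kay_{\mutindex}$-th column of $C_{\mutindex}$ is not $\pm e_{\kay_0}$. Then $\varepsilon(\mutindex+1)=\varepsilon(\mutindex)$ by Lemma \ref{hemispheres}(2), and since left multiplication by $X_{\kay_0}^{\varepsilon(\mutindex)}$ only alters the $\kay_0$-th row, the $\kay_{\mutindex}$-th columns of $C_{\mutindex}$ and $C'_{\mutindex}$ agree outside position $\kay_0$. Cofactor expansion together with $\det C_{\mutindex}=\pm1$ rules out a column of the form $c\,e_{\kay_0}$ with $|c|\ge2$, so the $\kay_{\mutindex}$-th column of $C_{\mutindex}$ has a nonzero entry outside position $\kay_0$, and sign coherence of $c$-vectors forces $\delta_{\mutindex}=\delta'_{\mutindex}$. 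The inductive hypothesis then yields $C'_{\mutindex+1}=X_{\kay_0}^{\varepsilon(\mutindex+1)}C_{\mutindex+1}$ directly.

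The main obstacle is the hemisphere-switching case: the $\kay_{\mutindex}$-th column of $C_{\mutindex}$ equals $\epsilon e_{\kay_0}$ for some $\epsilon\in\{+,-\}$. Here Lemma \ref{hemispheres}(1) forces $C_{\mutindex}\in H_{\kay_0}^\epsilon$, so $\varepsilon(\mutindex)=-\epsilon$, while $\varepsilon(\mutindex+1)=\epsilon$ by Lemma \ref{hemispheres}(2); moreover $\delta_{\mutindex}=\epsilon$ and $\delta'_{\mutindex}=-\epsilon$ (the $\kay_{\mutindex}$-th column of $C'_{\mutindex}=X_{\kay_0}^{-\epsilon}C_{\mutindex}$ is $-\epsilon e_{\kay_0}$). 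Invoking Lemma \ref{XX}(2) together with the nilpotency $(J_{\kay_0}B_{\mutindex})^2=0$, the required equality reduces to the matrix identity
\[
J_{\kay_0}B_{\mutindex}C_{\mutindex}=C_{\mutindex}J_{\kay_{\mutindex}}B_{\mutindex}.
\]
Both sides are supported in the $\kay_0$-th row, so this amounts to showing that the $\kay_0$-th row of $B_{\mutindex}C_{\mutindex}$ equals $\epsilon$ times the $\kay_{\mutindex}$-th row of $B_{\mutindex}$. I would establish this by combining the column condition $C_{\mutindex}e_{\kay_{\mutindex}}=\epsilon e_{\kay_0}$ (which via Lemma \ref{lem: CD=DC} also gives $f_{\kay_{\mutindex}}=f_{\kay_0}$) with the compatibility relation $DB_{\mutindex}=C_{\mutindex}^tDB_0C_{\mutindex}$ recorded after Lemma \ref{mujC}; reading off the $\kay_{\mutindex}$-th row of $DB_{\mutindex}$ via that formula yields the half-identity $(B_{\mutindex})_{\kay_{\mutindex},\cdot}=\epsilon(B_0C_{\mutindex})_{\kay_0,\cdot}$, and a parallel extraction using the specific column structure of $C_{\mutindex}$ should pin down $(B_{\mutindex}C_{\mutindex})_{\kay_0,\cdot}$ and close the identity.
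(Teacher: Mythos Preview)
Your overall architecture matches the paper's: induction on $s$, a case split according to whether the $k_s$-th column of $C_s$ is $\pm e_{k_0}$, and in the hemisphere-switching case a reduction to a single matrix identity via Lemma~\ref{XX}(2). The paper carries an auxiliary sign statement $(a_s)$ along in the induction, while you replace that by the cofactor/determinant argument; that substitution is fine.

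There is, however, a genuine slip in the hard case. The matrices $X_{k_0}^{\pm}$ in the formula $C'_s=X_{k_0}^{\varepsilon(s)}C_s$ are the ones built from $B_0$, not from $B_s$: this is forced already by the base case, since $C_1=\mu_{k_0}I=X_{k_0}^{+}$ is computed from the initial $B_0$. Hence Lemma~\ref{XX}(2) gives $X_{k_0}^{+}X_{k_0}^{-}=I+J_{k_0}B_0$, and the identity to which the inductive step reduces is
\[
J_{k_0}B_0\,C_s \;=\; C_s\,J_{k_s}B_s,
\]
not $J_{k_0}B_s C_s=C_sJ_{k_s}B_s$ as you wrote. (The nilpotency you mention is likewise $(J_{k_0}B_0)^2=0$, and is in fact not needed for the reduction.) Once you make this correction, the ``half-identity'' you already extracted from $DB_s=C_s^{t}DB_0C_s$, namely $(B_s)_{k_s,\cdot}=\epsilon(B_0C_s)_{k_0,\cdot}$, is exactly the required identity: both sides of $J_{k_0}B_0C_s=C_sJ_{k_s}B_s$ are supported in row $k_0$, the left side there being $(B_0C_s)_{k_0,\cdot}$ and the right side being $\epsilon(B_s)_{k_s,\cdot}$ because the $k_s$-th column of $C_s$ is $\epsilon e_{k_0}$. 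So no ``parallel extraction'' of $(B_sC_s)_{k_0,\cdot}$ is needed, and your proof closes immediately after the correction. The paper reaches the same identity and proves it in the same way, writing the column condition as $C_sJ_{k_s}=\pm J_{k_0}P_\tau$ before applying the Nakanishi--Zelevinsky relation.
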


\begin{rem}
Proposition 1.4  of \cite{NZ} gives a formula relating the two matrices $C'_\mutindex$ and $C_\mutindex$ by multiplication with a matrix, however this matrix depends on a sign that has to be traced back along the sequence of mutations. We give a formula that allows one to compute the sign using the $\kay_0$-hemisphere. This formula can be derived from the mutation formula for $g$-matrices in \cite{GHKK,R13}. We reprove it in a form more convenient for our purposes.
\end{rem}

\begin{proof}
The proof will be by induction on $s$ for the following two statements:
\begin{enumerate}
\item[$(a_{\mutindex})$] For each $k$ let $c_k,c_k'$ denote the $k$-th columns of the matrices $C_\mutindex$ and $C'_\mutindex$. Then $c_k,c_k'$ have the same sign unless $c_k=\pm e_{\kay_0}$ in which case $c_k'=-c_k$.\\
\item[$(b_{\mutindex})$] $C'_\mutindex=X_{\kay_0}^{\varepsilon (\mutindex)}C_\mutindex$ where $\varepsilon(\mutindex)$ is as defined above.\\
\end{enumerate}

$(\mutindex=1)$ $(a_1)$ By definition $C_1=\mu_{\kay_0}C_0=\mu_{\kay_0}I_n=X_{\kay_0}^+$ by Lemma \ref{mujC} (2). 
 Notice that the matrix $X_{\kay_0}^+$ has $-1$ in the $(\kay_0,\kay_0)$ entry, positive diagonal entries and only positive entries stemming from the $\kay_0$-th row of the matrix $B$, hence all but the $\kay_0$-th columns are positive, while the $\kay_0$-th column is  $-e_{\kay_0}$.
 
The matrix $C' _1$ equals $I_n$, hence all columns are positive. Therefore $(a_1)$ holds.\\
$(b_{1})$  $C'_1=I_n=X_{\kay_0}^+X_{\kay_0}^+=X_{\kay_0}^+C_1$ follows from Lemma \ref{XX} (1) and Lemma \ref{mujC} (2).
  
Assume $(a_{\mutindex})$ and $(b_{\mutindex})$ hold. 
  
\underline{Claim}: $(b_{\mutindex+1})$ holds, \emph{i.e.} $C'_{\mutindex+1}=X^{\varepsilon(\mutindex+1)}_{\kay_0}C_{\mutindex+1}$. \\
Proof of the claim: We know that $C'_\mutindex=X^{\varepsilon(\mutindex)}_{\kay_0}C_\mutindex$. Consider $C'_{\mutindex+1}=\mu_{\kay_{\mutindex}}C'_\mutindex$ and $C_{\mutindex+1}=\mu_{\kay_{\mutindex}}C_\mutindex$.\\

\underline{Case 1}: The $\kay_\mutindex$-th columns in both $C_\mutindex$ and $C'_\mutindex$ are positive. Then, by Lemma \ref{mujC} (1) it follows that $C'_{\mutindex+1}=\mu_{\kay_\mutindex}C'_\mutindex=C'_\mutindex X^+_{\kay_\mutindex}$ and $C_{\mutindex+1}=\mu_{\kay_\mutindex}C_\mutindex=C_\mutindex X^+_{\kay_\mutindex}$. Therefore $C'_{\mutindex+1}=\mu_{\kay_{\mutindex}}C'_\mutindex=C'_\mutindex X^+_{\kay_\mutindex}= X^{\varepsilon(\mutindex)}_{\kay_0}C_\mutindex X^+_{\kay_\mutindex}= X^{\varepsilon(\mutindex)}_{\kay_0}C_{\mutindex+1}$. We only need to show that $\varepsilon(\mutindex)=\varepsilon(\mutindex+1)$.
To see that, we notice that by the induction hypothesis and assumption that they have the same sign, the $\kay_\mutindex$-th columns of $C_\mutindex$ and $C'_\mutindex$ are not $\pm e_{\kay_0}$. Therefore, by Lemma \ref{hemispheres}, $C_{\mutindex+1}$ and $C_\mutindex$ are in the same $\kay_0$-hemisphere. Hence $\varepsilon(\mutindex+1)=\varepsilon(\mutindex)$  by the definition of $\varepsilon$. Therefore $C'_{\mutindex+1}=X^{\varepsilon(\mutindex+1)}_{\kay_0}C_{\mutindex+1}$.\\

\underline{Case 2}: The $\kay_\mutindex$-th columns in both $C_\mutindex$ and $C'_\mutindex$ are negative. The proof is the same as in case 1, using the matrix $X^-_{\kay_\mutindex}$.\\

\underline{Case 3}: The $\kay_\mutindex$-th columns in  $C_\mutindex$ and $C'_\mutindex$ have opposite signs. By induction hypothesis $(a_\mutindex)$, this means that the $\kay_\mutindex$-th columns of $C'_\mutindex$ and $C_\mutindex$ are $e_{\kay_0}$ and $-e_{\kay_0}$ (or conversely  $-e_{\kay_0}$ and $e_{\kay_0}$). In matrix form this condition is:
   
    \[
    	C_\mutindex J_{\kay_\mutindex}=\pm J_{\kay_0}P_\tau
    \]
where $J_{\kay_\mutindex}$ is as in Lemma \ref{XX} and $P_\tau$ is the permutation matrix of the transposition $\tau=(\kay_0,\kay_\mutindex)$. ($P_\tau D=DP_\tau$ by Lemma \ref{lem: CD=DC}.) Since $e_{\kay_0}$ has entries $\geq 0$  and $-e_{\kay_0}$ has entries $\leq 0$ it follows by Lemma \ref{hemispheres} that:  
\[
C_\mutindex\in H^+_{\kay_0}, \ \ \ \ C'_\mutindex\in H^-_{\kay_0}, \ \ \ \ C_{\mutindex+1}=\mu_{\kay_\mutindex}C_\mutindex\in H^-_{\kay_0}, \ \ \ \ C'_{\mutindex+1}=\mu_{\kay_\mutindex}C'_\mutindex\in H^+_{\kay_0}.
\]
Applying now Lemma \ref{mujC}, it follows that:
\[
C'_{\mutindex+1}=\mu_{\kay_{\mutindex}}C'_\mutindex=C'_{\mutindex}X^-_{\kay_\mutindex}\quad \text{ and  } \quad C_{\mutindex+1}=\mu_{\kay_{\mutindex}}C_\mutindex=C_{\mutindex}X^+_{\kay_\mutindex}.
\]
By the induction hypothesis $(b_\mutindex)$ and the fact  that $C_\mutindex\in H^+_{\kay_0}$ it follows that 
\[
C'_\mutindex=X^{\varepsilon(\mutindex)}_{\kay_0}C_\mutindex= X^-_{\kay_0}C_ j\quad \text{ and therefore } \quad C'_{\mutindex+1}=C'_{\mutindex}X^-_{\kay_\mutindex}= X_{\kay_0}^{-}C_\mutindex X^-_{\kay_\mutindex}.\]
In order to prove $(b_{\mutindex+1})$ in this case and using the fact that  $C_{\mutindex+1}\in H^-_{\kay_0}$, we want to show: 
\[
C'_{\mutindex+1}=X^{\varepsilon(\mutindex+1)}_{\kay_0}C_{\mutindex+1}=X^+_{\kay_0}C_{\mutindex+1}.
\]

From the above formulas, it is enough to show:
\[
X_{\kay_0}^{-}C_\mutindex X^-_{\kay_\mutindex}=X^+_{\kay_0}C_{\mutindex+1}=X^+_{\kay_0}C_{\mutindex}X^+_{\kay_\mutindex}.
\]
Using Lemma \ref{XX} (1) and (2), it will be enough to show:
\[
X^+_{\kay_0}X_{\kay_0}^{-}C_\mutindex=C_{\mutindex}X^+_{\kay_\mutindex}X^-_{\kay_\mutindex} \ \ \text{ or, equivalently, }\ \ \ (I_n+J_{\kay_0}B_0)C_\mutindex=C_{\mutindex}(I_n+J_{\kay_\mutindex}B_\mutindex).
\]
Since $I_nC_\mutindex=C_\mutindex I_n$, it suffices to show that $J_{\kay_0}B_0C_\mutindex=C_{\mutindex}J_{\kay_\mutindex}B_\mutindex$. But, we have $C_\mutindex J_{\kay_\mutindex}=J_{\kay_0}P_\tau$. So,
\[
	C_\mutindex J_{\kay_\mutindex}D^{-1}C_\mutindex^tD=C_\mutindex J_{\kay_\mutindex}D^{-1}J_{\kay_\mutindex}C_\mutindex^tD=J_{\kay_0}P_\tau D^{-1}P_\tau J_{\kay_0}D=J_{\kay_0} D^{-1} J_{\kay_0}D=J_{\kay_0}.
\]
Multiplying both sides by $B_0C_\mutindex$ and using the equation $B_\mutindex=D^{-1}C_\mutindex^tDB_0C_\mutindex$ from \cite{NZ} we get:
\[
	C_\mutindex J_{\kay_\mutindex}B_\mutindex=C_\mutindex J_{\kay_\mutindex}D^{-1}C_\mutindex^tDB_0C_\mutindex=J_{\kay_0}B_0C_\mutindex
\]
as required. That was the last step which now implies $(b_{\mutindex+1})$, \emph{i.e.}, $C'_{\mutindex+1}=X^{\varepsilon(\mutindex+1)}_{\kay_0}C_{\mutindex+1}.$

\underline{Claim}: Assuming $(a_{\mutindex})$ and $(b_{\mutindex})$ the statement $(a_{\mutindex+1})$ holds. \\
The proof of this claim uses the fact that $(b_{\mutindex+1})$ holds from above, \emph{i.e.}  $C'_{\mutindex+1}=X^{\varepsilon(\mutindex+1)}_{\kay_0}C_{\mutindex+1}.$ The matrix $X^{\varepsilon(\mutindex+1)}_{\kay_0}$ has nonnegative entries in all rows different from $\kay_0$-th. 
Hence each row different from $\kay_0$-th row  in the matrices  $C'_{\mutindex+1}$ and $C_{\mutindex+1}$ will have the same sign. Because of sign coherence of $c$-vectors, this means that each pair of corresponding columns in $C'_{\mutindex+1}$ and $C_{\mutindex+1}$ which are $\neq \pm e_{\kay_0}$ must have the same sign. 
If the $k$-th column vector in  $C_{\mutindex+1}$  is $\pm e_{\kay_0}$ then the $k$-th column vector in $C'_{\mutindex+1}$ will be $ \mp e_{\kay_0}$ since the $(\kay_0,\kay_0)$ entry in $X^{\varepsilon(\mutindex+1)}_{\kay_0}$ is $=-1$.  Therefore $(a_{\mutindex+1})$ holds. 
\end{proof}

\subsection{Rotation Lemma}

To state the Rotation Lemma we need the following.

\begin{lem}\label{all red is -Ps}
If all entries of a $c$-matrix $C$ are non-positive then $C$ is a negative permutation matrix. In particular, any reddening sequence ends in a negative permutation matrix $-P_\sigma$, whose $j$-th column is $-e_{\sigma(j)}$, for some permutation $\sigma$.
\end{lem}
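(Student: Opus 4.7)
The plan is to leverage sign-coherence of both $c$-vectors (columns of $C$) and $g$-vectors (rows of $G$) through the Nakanishi--Zelevinsky identity $G^{t}DC=D$. Rewriting this identity as $C^{-1} = D^{-1}G^{t}D$, I would first compute $(C^{-1})_{ij} = (f_j/f_i)\,G_{ji}$. This shows that the $j$-th column of $C^{-1}$ is a positive rescaling of the $j$-th row of $G$, so by the sign-consistency of $g$-vectors recalled just before Lemma \ref{lem: CD=DC}, each column of $C^{-1}$ is sign-coherent.

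Next, for each $j\in\{1,\dots,n\}$ I would set $v := C^{-1}e_j$, so that $Cv = e_j$. Since $v$ is sign-coherent and every column of $C$ is non-positive by hypothesis, a non-negative $v$ would make $Cv$ a non-negative combination of non-positive vectors, forcing $Cv\le 0$ componentwise and contradicting $Cv = e_j$. Hence $v\le 0$; writing $u := -v\ge 0$ gives $Cu = -e_j$, with all entries of $u$ non-negative integers.

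The core combinatorial step is then to examine $Cu = -e_j$ row by row. For each $i\ne j$ the equation $\sum_k C_{ik}u_k = 0$ is a sum of non-positive terms and must vanish termwise, so for every $k$ with $u_k>0$ the $k$-th column of $C$ is supported only in row $j$. Linear independence of the columns of $C$ (guaranteed by $\det C = \pm 1$) then forces the support of $u$ to be a single index $k_0$; the row-$j$ equation $u_{k_0}C_{jk_0} = -1$ together with integrality and positivity yields $u_{k_0}=1$ and $C_{jk_0}=-1$, so the $k_0$-th column of $C$ is exactly $-e_j$.

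Varying $j$ produces a map $j\mapsto k_0(j)$ which must be a bijection, and setting $\sigma := k_0^{-1}$ gives that column $j$ of $C$ equals $-e_{\sigma(j)}$; that is, $C = -P_\sigma$. The statement about reddening sequences follows immediately from Definition \ref{def:reddening}, since by definition the final quiver's $c$-matrix has all columns non-positive, so the first part applies. I expect the step requiring the most care to be the bridge from sign-coherence of rows of $G$ to sign-coherence of columns of $C^{-1}$ via the identity $C^{-1}=D^{-1}G^{t}D$; once that translation is carefully set up (tracking the $f_j/f_i$ rescaling so no signs are spuriously flipped), the subsequent sign and integrality arguments are routine linear algebra.
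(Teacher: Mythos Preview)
Your argument is correct and genuinely different from the paper's. You use the Nakanishi--Zelevinsky identity to transport row sign-coherence of $G$ to column sign-coherence of $C^{-1}$, and then argue by pure linear algebra: the integer solution $u\ge 0$ to $Cu=-e_j$ must have singleton support by looking at the vanishing rows and invoking $\det C=\pm1$. The paper instead first shows $C$ is all nonpositive iff $G$ is, and then for each $j$ applies the mutation $\mu_j$ and the hemisphere Lemma~\ref{hemispheres}(2): since $C\in H_k^-$ for all $k$ but $\mu_jC$ has a positive $j$-th column (hence $\mu_jG$ has a positive row, placing $\mu_jC$ in some $H_k^+$), that lemma forces the $j$-th column of $C$ to be $-e_k$. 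So the paper leans on the mutation/hemisphere machinery already built up, whereas your route is self-contained modulo the NZ identity and sign-coherence, and never mutates anything. Your approach is more elementary and portable; the paper's is shorter given the surrounding framework and reinforces the hemisphere viewpoint used throughout Section~2.
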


\begin{defn}
We call $\sigma$ the \emph{permutation} associated to the reddening sequence.
\end{defn}

\begin{proof}[Proof of Lemma \ref{all red is -Ps}]
We first note that $C$ is all negative (\emph{i.e.}, all nonzero entries are negative) if and only if the corresponding $g$-matrix $G$ is all negative. This follows from the equation $DCD^{-1}G^t=I_n$. For any $k$, the product of the $k$-th row of $DCD^{-1}$ with the $k$-th column of $G^t$ is 1. So, one of the entries in the $k$-th column of $G^t$ must be negative. By sign consistency of $g$-vectors, all nonzero entries in the $k$-th column of $G^t$ are negative. Since this holds for all $k$, all nonzero entries of $G$ are negative. The converse follows from the equation $D^{-1}G^tDC=I_n$.

Now suppose that $C$ is all negative. Then, $C$ lies in $H_k^-$ for all $k$. For each $j$, $\mu_jC$ has positive $j$-th column. Therefore, $\mu_jG$ must also have a positive entry, say in the $k$-th row. Then $\mu_jC$ lies in $H_k^+$ which implies, by Lemma \ref{hemispheres}(2), that the $j$-th column of $C$ must be $-e_k$. Since this holds for all $j$, $C$ is a negative permutation matrix.
\end{proof}

We will show that the Rotation Lemma follows from the Mutation Formula in Theorem \ref{formula} and the following lemma.

\begin{lem}\label{one-more-time}
Let $B$ be a skew-symmetrizable matrix. For any $1\le k\le n$ every reddening sequence for $B$ will mutate the $c$-vector $+e_k$ one more time than it mutates the $c$-vector $-e_k$.
\end{lem}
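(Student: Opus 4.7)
The plan is to track which $k$-hemisphere the $c$-matrix $C_s$ occupies as $s$ runs from $0$ to $m$, and to show that each mutation of the $c$-vector $\pm e_k$ corresponds to a directed ``crossing'' between hemispheres, with the sign determining the direction.

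First I would set up the endpoints. At $s=0$, the matrix $C_0=I$ has $+e_k$ as its $k$-th column, so Lemma~\ref{hemispheres}(1) puts $C_0\in H_{\kay}^+$. By Lemma~\ref{all red is -Ps}, a reddening sequence terminates in a negative permutation matrix $-P_\sigma$, which has $-e_k$ as one of its columns, so again by Lemma~\ref{hemispheres}(1) the final $c$-matrix lies in $H_{\kay}^-$.

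Next I would read off the behavior of a single step. By Lemma~\ref{hemispheres}(2), $C_s$ and $C_{s+1}=\mu_{k_s}C_s$ lie in opposite $k$-hemispheres if and only if the mutated $c$-vector (the $k_s$-th column of $C_s$) is $\pm e_k$; otherwise the hemisphere is preserved. Moreover, if the mutated column is $+e_k$ then by Lemma~\ref{hemispheres}(1) we necessarily have $C_s\in H_{\kay}^+$, and after mutation the column becomes $-e_k$ so $C_{s+1}\in H_{\kay}^-$; symmetrically a $-e_k$-mutation sends $H_{\kay}^-$ to $H_{\kay}^+$. In other words, hemisphere crossings are forced to alternate between ``$+$-crossings'' (from $H_{\kay}^+$ to $H_{\kay}^-$) and ``$-$-crossings'' (from $H_{\kay}^-$ to $H_{\kay}^+$), in lockstep with the mutations at $+e_k$ and $-e_k$ respectively.

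Finally I would conclude by a parity/counting argument. The sequence of hemispheres $C_0,C_1,\dots,C_m$ begins in $H_{\kay}^+$ and ends in $H_{\kay}^-$, so the crossings along the way must form an odd-length alternating pattern that starts with a $+$-crossing and ends with a $+$-crossing. Therefore the number of $+$-crossings exceeds the number of $-$-crossings by exactly one, which by the correspondence above says that $+e_k$ is mutated exactly once more than $-e_k$, as required.

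The only real obstacle is making sure the ``crossing = $\pm e_k$-mutation'' correspondence is clean, which is already bundled into Lemma~\ref{hemispheres}; once that is in hand, the argument is a short parity count on a walk between the two hemispheres.
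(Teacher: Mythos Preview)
Your proof is correct and follows essentially the same approach as the paper: track which $k$-hemisphere each $C_s$ lies in, use Lemma~\ref{hemispheres} to identify hemisphere crossings with mutations at $\pm e_k$, and then count that a path from $H_k^+$ to $H_k^-$ has exactly one more positive-to-negative crossing than negative-to-positive. Your write-up is somewhat more explicit about the alternation and direction of crossings, but the underlying argument is the same.
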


\begin{proof}
Consider the sequence of $c$-matrices $I_n=C_0,\dots,C_m=-P_\sigma$ of a reddening sequence on $B$. The first $c$-matrix $C_0=I_n$ lies in $H_k^+$ for every $k$ and the last $c$-matrix lies in $H_k^-$ for every $k$. So, during the mutation sequence it must pass from the positive to the negative side of the hyperplane $H_k$ one more time than it goes from the negative to the positive side. By Lemma \ref{hemispheres}(2), the first event occurs when the mutated $c$-vector is $+e_k$. The second event occurs when the mutated $c$-vector is $-e_k$. The lemma follows.
\end{proof}

\begin{thm}[Rotation Lemma]\label{rotation}
Let $B$ be a skew-symmetrizable matrix, let\break $(\kay_0,\kay_1,\dots,\kay_{m-1})$ be an $r$-reddening sequence for $B$ with associated permutation $\sigma$ and let $B'=\mu_{\kay_0}B$. Then 
the last $c$-matrix in the mutation sequence

\[
	(\kay_1,\kay_2,\dots,\kay_{m-1},\sigma^{-1}(\kay_0))
\]
is $-P_\sigma$, i.e., this is a reddening sequence for $B'$ with the same permutation as the reddening sequence for $B$. Furthermore, this new reddening sequence has exactly $r$ red mutations. In particular, a maximal green sequence for $B$ gives a maximal green sequence for $B'$.

\end{thm}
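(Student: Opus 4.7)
The plan is to apply the Mutation Formula (Theorem~\ref{formula}) to the length-$(m+1)$ sequence obtained by appending $\sigma^{-1}(\kay_0)$ to $(\kay_0,\dots,\kay_{m-1})$, together with its rotation obtained by appending the same index to $(\kay_1,\dots,\kay_{m-1})$; since the proof of Theorem~\ref{formula} is an induction on the length of the sequence, nothing prevents applying it with one extra step. Because $C_m=-P_\sigma$ is all non-positive, the argument in the proof of Lemma~\ref{all red is -Ps} shows that the corresponding $g$-matrix is non-positive in every row, so $C_m\in H_{\kay_0}^-$ and $\varepsilon(m)=+$, giving $C'_m=X_{\kay_0}^+(-P_\sigma)$.

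Since the $\sigma^{-1}(\kay_0)$-th column of $-P_\sigma$ equals $-e_{\kay_0}$ and the $\kay_0$-th column of $X_{\kay_0}^+$ is $-e_{\kay_0}$, the $\sigma^{-1}(\kay_0)$-th column of $C'_m$ equals $X_{\kay_0}^+(-e_{\kay_0})=+e_{\kay_0}$. Therefore the appended mutation is green, switches the $\kay_0$-hemisphere by Lemma~\ref{hemispheres}(2), and produces $\varepsilon(m+1)=-$. Combining this with $C_{m+1} = (-P_\sigma)\,X_{\sigma^{-1}(\kay_0)}^-(B_m)$ (Lemma~\ref{mujC}(1)) and $X_j^-X_j^-=I$ (Lemma~\ref{XX}(1)), the desired equality $C'_{m+1}=-P_\sigma$ reduces to the identity
\[
X_{\kay_0}^-(B_0)\,(-P_\sigma)\;=\;(-P_\sigma)\,X_{\sigma^{-1}(\kay_0)}^-(B_m).
\]
Row-by-row comparison shows both sides automatically agree outside row $\kay_0$; matching row $\kay_0$ amounts to checking $(B_0)_{\kay_0,\sigma(k)}=(B_m)_{\sigma^{-1}(\kay_0),k}$ for every $k\neq\sigma^{-1}(\kay_0)$. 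This is immediate from the base-change formula $B_m=D^{-1}C_m^tDB_0C_m$ of~\cite{NZ} combined with $DP_\sigma=P_\sigma D$ (Lemma~\ref{lem: CD=DC}), which collapses to $B_m=P_\sigma^{-1}B_0P_\sigma$. I expect this identity, intertwining column mutation, base change, and permutation conjugation via two different ambient matrices $B_0$ and $B_m$, to be the main obstacle.

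For the count of red mutations, I will use statement $(a_\mutindex)$ from the inductive proof of Theorem~\ref{formula}: corresponding columns of $C_\mutindex$ and $C'_\mutindex$ have the same sign unless they equal $\pm e_{\kay_0}$, in which case they are opposite. Consequently, for each $t\in\{1,\dots,m-1\}$ the color of the mutation at $\kay_t$ is preserved between the two sequences except precisely at those steps where the $\kay_t$-th column of $C_t$ is $\pm e_{\kay_0}$, where it flips. By Lemma~\ref{one-more-time} the original sequence mutates $+e_{\kay_0}$ exactly once more than $-e_{\kay_0}$; since the (green) $\mutindex=0$ step accounts for one $+e_{\kay_0}$ mutation, the remaining occurrences of $+e_{\kay_0}$ and $-e_{\kay_0}$ at $\mutindex\in\{1,\dots,m-1\}$ balance, so the color flips cancel. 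Hence the first $m-1$ mutations of the rotated sequence contribute the same number $r$ of red mutations as the original, and the appended final mutation at $\sigma^{-1}(\kay_0)$ is green (its column being $+e_{\kay_0}$), yielding a total of exactly $r$ red mutations; in particular a maximal green sequence ($r=0$) rotates to a maximal green sequence.
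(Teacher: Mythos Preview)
Your proof is correct and follows essentially the same route as the paper: both use the Mutation Formula to obtain $C'_m=X_{\kay_0}^+(-P_\sigma)$, both reduce the key computation to $B_m=P_\sigma^tB_0P_\sigma$ via \cite{NZ} and Lemma~\ref{lem: CD=DC}, and the red-count argument via statement~$(a_\mutindex)$ and Lemma~\ref{one-more-time} is identical. The only organizational difference is that the paper computes $\mu_{\sigma^{-1}(\kay_0)}C'_m$ directly from the mutation rule with respect to $B_m$, whereas you push the Mutation Formula one step further to $\mutindex=m+1$ and package the same computation as the intertwining identity $X_{\kay_0}^-(B_0)(-P_\sigma)=(-P_\sigma)X_{\sigma^{-1}(\kay_0)}^-(B_m)$; these are two equivalent bookkeepings of the same calculation.
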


\begin{proof}
We use the same notation as in the mutation formula. The reddening sequence\break $(\kay_0,\dots,\kay_{m-1})$ gives the mutation sequences
\[
	\mat{B\\ I_n}=\mat{B_0\\ C_0}\xrightarrow{\mu_{\kay_0}}\mat{B_1\\C_1}\xrightarrow{\mu_{\kay_1}}\mat{B_2\\C_2}\xrightarrow{\phantom{\mu_{\kay_0}}}\cdots\xrightarrow{\phantom{\mu_{\kay_0}}}\mat{B_m\\C_m}
\]
By Nakanishi-Zelevinsky we have:
\[
	DB_s=C_s^tDB_0C_s
\]
Since $(\kay_0,\kay_1,\dots,\kay_{m-1})$ is a reddening sequence we have $C_m=-P$ where $P=P_\sigma$. By Lemma \ref{lem: CD=DC} we have that $P$ commutes with $D$. Thus, $B_m=D^{-1}P^tD B_0P=P^tB_0P$. Let $j=\sigma^{-1}(\kay_0)$ so that the $j$-th column of $P$ is the unit vector $e_{\kay_0}$. By the mutation formula in \ref{formula} we get:
\[
	\widetilde B'_m=\mat{B_m\\C_m'}=\mat{P^tB_0P\\-C_1P}.
\]

\noindent The $c$-matrix $C_1$ is equal to the identity matrix $I_n$ except for its $\kay_0$-th row which is given by
\[
	(C_1)_{\kay_0\ell}=\begin{cases} -1 & \text{if } \ell=\kay_0\\
    \max(b_{\kay_0\ell},0)& \text{if } \ell\neq\kay_0
    \end{cases}
\]

Multiplication by $-P_\sigma$ gives the matrix $-C_1P_\sigma$ which is equal to $-P_\sigma$ except for its $j$-th row ($j=\sigma^{-1}(\kay_0)$) where
\[
	-(C_1P_\sigma)_{j\ell}=\begin{cases} 1 & \text{if } \ell=j\\
   - \max(b_{\sigma(j)\sigma(\ell)},0)& \text{if } \ell\neq j
    \end{cases}
\]
But, $b_{\sigma(j)\sigma(\ell)}=b_{\kay_0\sigma(\ell)}$ is exactly the $(j,\ell)$ entry of the matrix $B_m=P_\sigma^tB_0P_\sigma$. When $b_{\sigma(j)\sigma(\ell)}>0$ we add $b_{\sigma(j)\sigma(\ell)}$ times the $j$-th column of $C_m'=-C_1P_\sigma$ (which is $e_{\kay_0}$) to its $\ell$-th column
\[
	-C_1P_\sigma e_\ell=-e_{\sigma(\ell)}-\max(b_{\kay_0\sigma(\ell)},0)e_{\kay_0}
\]
to get $-e_{\sigma(\ell)}$. Then we change the sign of the $j$-th column to produce the $c$-matrix $C_{m+1}'=-P_\sigma$. This proves that $(\kay_1,\dots,\kay_{m-1},\sigma^{-1}(\kay_0))$ is a reddening sequence for $B'$ with the same associated permutation $\sigma$.

It remains to show that this new reddening sequence has the same number of red mutations as the original reddening sequence.

Let $r\ge0$ be the number of red mutations in the first sequence. This includes $p$ mutations at the $c$-vector $-e_{\kay_0}$ and $q=r-p$ mutations at other negative $c$-vectors. By  lemma \ref{one-more-time}, there will be exactly $p+1$ mutations at the positive $c$-vector $e_{\kay_0}$. The first mutation will be one of these. Of the remaining $m-1$ mutations in the first mutation sequence, exactly $p$ will be at the $c$-vector $e_{\kay_0}$ and exactly $p$ will be at the $c$-vector $-e_{\kay_0}$.

By the mutation formula, the sign of the mutation $C_s\xrightarrow{\mu_{k_s}} C_{s+1}$ will be the same as the sign of the mutation $C_s'\to C_{s+1}'$ if the $c$-vector being mutated is not equal to $\pm e_{\kay_0}$. This means that both mutation sequences have the same number $q$ of mutations at negative $c$-vectors not equal to $-e_{\kay_0}$. The mutation formula also tells us that, if the $c$-vector being mutated in $C_s$ is $\pm e_{\kay_0}$ then the $c$-vector being mutated in $C_s'$ will be the negative of that vector. Thus, the $p$ red mutations at $-e_{\kay_0}$ for $C_s$ will become $p$ green mutations for $C_s'$ and vice versa. The number of red mutations for the rotated sequence will thus be $p+q=r$. (The last mutation is at the positive $c$-vector $e_{\kay_0}$.) This completes the proof of the Rotation Lemma.
\end{proof}

\begin{prop}
Let $(\kay_0,\dots,\kay_{m-1})$ be a reddening sequence for $B$ and let $t$ be maximal so that $\kay_t$ is a mutation on vector $e_{\kay_0}$. Let $c_0,\dots,c_{m-1}$ be the $c$-vector labeling of the same mutation sequence. (So, $c_0=c_t=e_{\kay_0}$.) Let $c_1',\dots,c_m'$ be the $c$-vector labeling of the rotated reddening sequence for $B'$. Then $c_\mutindex=C_1c_\mutindex'=X_{\kay_0}^+c_\mutindex'$ for all $t<\mutindex<m$.
\end{prop}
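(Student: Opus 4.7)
The core approach is to apply the Mutation Formula (Theorem~\ref{formula}), which yields $C'_\mutindex = X_{\kay_0}^{\varepsilon(\mutindex)} C_\mutindex$, where the sign $\varepsilon(\mutindex)$ depends only on which $\kay_0$-hemisphere contains $C_\mutindex$. The claim reduces to showing that $\varepsilon(\mutindex) = +$ for every $\mutindex$ with $t < \mutindex < m$, so that $C'_\mutindex = X_{\kay_0}^+ C_\mutindex$; taking the $\kay_\mutindex$-th column and using involutivity of $X_{\kay_0}^+$ (Lemma~\ref{XX}(1)) then gives $c_\mutindex = X_{\kay_0}^+ c'_\mutindex$, and the identity $C_1 = X_{\kay_0}^+$ from Lemma~\ref{mujC}(2) supplies the middle equality $c_\mutindex = C_1 c'_\mutindex$.

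The heart of the argument is therefore a hemisphere-tracking argument. By Lemma~\ref{hemispheres}(2), the $\kay_0$-hemisphere containing $C_\mutindex$ changes between consecutive steps if and only if the mutated $c$-vector equals $\pm e_{\kay_0}$. Combined with Lemma~\ref{one-more-time}, which guarantees that in the reddening sequence the number of mutations at $+e_{\kay_0}$ is exactly one more than at $-e_{\kay_0}$, together with the observation that $C_0 = I \in H_{\kay_0}^+$ while $C_m = -P_\sigma \in H_{\kay_0}^-$, the mutations on $\pm e_{\kay_0}$ must form an alternating sequence starting with $+e_{\kay_0}$ (the first mutation, since $c_0 = e_{\kay_0}$) and, by the maximality of $t$, ending with $+e_{\kay_0}$ at step $t$.

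It follows that for every $\mutindex$ with $t < \mutindex < m$, no mutation is at $\pm e_{\kay_0}$, so $C_\mutindex$ remains in a fixed hemisphere throughout this range. Since $c_t = e_{\kay_0}$ places $C_t \in H_{\kay_0}^+$ by Lemma~\ref{hemispheres}(1), and the step-$t$ mutation flips the $\kay_t$-th column to $-e_{\kay_0}$, we get $C_{t+1} \in H_{\kay_0}^-$. Hence $C_\mutindex \in H_{\kay_0}^-$ for all $t < \mutindex < m$, so $\varepsilon(\mutindex) = +$ and the proof is complete.

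No significant obstacle is expected: every ingredient (the Mutation Formula, hemisphere behavior under mutation, and the counting Lemma~\ref{one-more-time}) is already in hand, and the argument is essentially a bookkeeping assembly. The one delicate point is the alternation argument of the second paragraph, which uses both that the start and end $c$-matrices lie in opposite $\kay_0$-hemispheres and that the alternation cannot ``overshoot'' past step $t$ without producing another $+e_{\kay_0}$ mutation contradicting the maximality of $t$.
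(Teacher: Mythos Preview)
Your proof is correct and follows essentially the same approach as the paper: both reduce to showing $C_\mutindex\in H_{\kay_0}^-$ for $t<\mutindex<m$ and then apply the Mutation Formula. The paper's version is terser, observing directly that since the reddening sequence ends in $H_{\kay_0}^-$ and $t$ is the last mutation at $+e_{\kay_0}$, the sequence cannot leave $H_{\kay_0}^-$ after step $t$ (else it would need another $+e_{\kay_0}$ mutation to return); your detour through Lemma~\ref{one-more-time} and the full alternation argument is unnecessary but not incorrect.
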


\begin{proof} Since a reddening sequence must end in $H_{\kay_0}^-$, it cannot leave the region after entering it for the last time. Therefore, for $\mutindex>t$, the $c$-matrix $C_\mutindex$ must remain in the negative part $H_{\kay_0}^-$ of the hyperplane $H_{\kay_0}$. The mutation formula then gives $c_\mutindex=C_1c_\mutindex'=X_{\kay_0}^+c_\mutindex'$ as claimed.
\end{proof}

\begin{figure}[h]  
	\begin{center}
	\newdimen\dd   
	\dd=3em		   
	
	\newdimen\sp   
	\sp=5\dd

\includegraphics{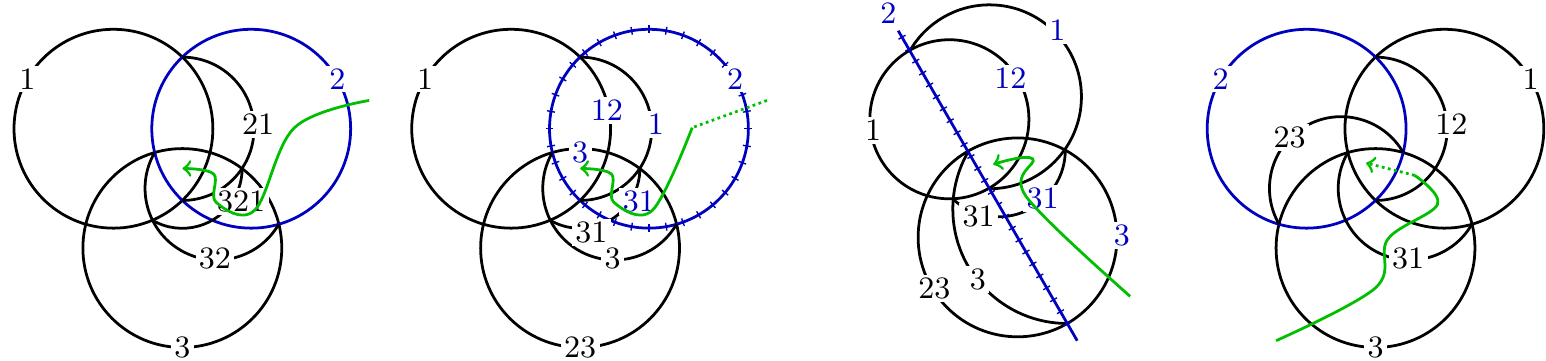}

    \end{center}
	
	\caption{Illustration of the Rotation Lemma. Rotating at $2$ turns the maximal green sequence $(2,3,1,3,2)$ of the quiver $Q:3\to 2\to 1$ into the maximal green sequence $(3,1,3,2,3)$ on $Q'=\mu_2Q$. In the second picture, the normal orientation of the sphere $D(2)$ is reversed and the wall labels change. In the third picture $D(2)$ expands out to ``infinity'' and in the fourth picture closes up with the correct normal orientation. The dotted green lines indicate the start of the old and the end of the new maximal green sequence.}
\label{fig:rotation}\label{fig03}
\end{figure}

\begin{eg}
We further illustrate the rotation lemma in the example from Figure \ref{fig:rotation}, namely rotating the maximal green sequence $(2,3,1,3,2)$ of $Q:3\to 2\to 1$ to the maximal green sequence $(3,1,3,2,3)$ of $Q'=\mu_2Q$. The corresponding $c$-matrix sequence for $Q$ is 

\begin{footnotesize}
\[
\left[\begin{array}{c>{\columncolor{green!25}}cc}
1 & 0 & 0 \\ 0 & 1 & 0 \\ 0 & 0 & 1 \\ 
\end{array}\right],
\left[\begin{array}{cc>{\columncolor{green!25}}c}
1 & 0 & 0 \\ 1 & -1 & 0 \\ 0 & 0 & 1 \\
\end{array}\right],
\left[\begin{array}{>{\columncolor{green!25}}ccc}
1 & 0 & 0 \\ 1 & -1 & 0 \\ 1 & 0 & -1 \\
\end{array}\right],
\left[\begin{array}{cc>{\columncolor{green!25}}c}
-1 & 0 & 1 \\ -1 & -1 & 1 \\ -1 & 0 & 0 \\
\end{array}\right],
\left[\begin{array}{c>{\columncolor{green!25}}cc}
0 & 1 & -1 \\ 0 & 0 & -1 \\ -1 & 0 & 0 \\
\end{array}\right],
\left[\begin{array}{ccc}
0 & -1 & 0 \\ 0 & 0 & -1 \\ -1 & 0 & 0 \\
\end{array}\right]
\]
\end{footnotesize}

where the highlighted columns indicate the $c$-vector where the mutation occurs.
Likewise the $c$-matrix sequence for $Q'$ is

\begin{footnotesize}
\[
\left[\begin{array}{cc>{\columncolor{green!25}}c}
1 & 0 & 0 \\ 0 & 1 & 0 \\ 0 & 0 & 1 \\
\end{array}\right],
\left[\begin{array}{>{\columncolor{green!25}}ccc}
1 & 0 & 0 \\ 0 & 1 & 0 \\ 1 & 0 & -1 \\
\end{array}\right],
\left[\begin{array}{cc>{\columncolor{green!25}}c}
-1 & 0 & 1 \\ 0 & 1 & 0 \\ -1 & 0 & 0 \\
\end{array}\right],
\left[\begin{array}{c>{\columncolor{green!25}}cc}
0 & 1 & -1 \\ 0 & 1 & 0 \\ -1 & 0 & 0 \\
\end{array}\right],
\left[\begin{array}{cc>{\columncolor{green!25}}c}
0 & -1 & 0 \\ 0 & -1 & 1 \\ -1 & 0 & 0 \\ 
\end{array}\right],
\left[\begin{array}{ccc}
0 & -1 & 0 \\ 0 & 0 & -1 \\ -1 & 0 & 0 \\
\end{array}\right].
\]
\end{footnotesize}

\end{eg}

\subsection{Greg Muller's example} 

This section provides some explanations why our Rotation Lemma does not contradict Greg Muller's example. 

At first sight, the Rotation Lemma might seem contradictory to Greg Muller's recent preprint \cite{M} where he provides examples showing that the existence of a maximal green sequence is not invariant under quiver mutation. However, as 0-reddening sequences are maximal green sequences, the Rotation Lemma shows: 
Given a maximal green sequence $(\kay_0,\kay_1,\dots,\kay_{m-1})$ on $Q$ with associated permutation $\sigma$,
\[
	(\kay_1,\kay_2,\dots,\kay_{m-1},\sigma^{-1}(\kay_0))
\]
is a maximal green sequence on $\mu_{\kay_0}Q$ with the same permutation $\sigma$. So the existence of maximal green sequences, and their respective length,  is in fact preserved under mutation {\it for those quivers that appear along the maximal green sequence.}
This yields a restriction on which quivers in the mutation class of $Q$ can appear along the maximal green sequence for $Q$, an effect that can already be illustrated for a quiver $Q$ of type $A_3$:
 
We reproduce Figure 5 from \cite{BDP} of the oriented mutation graph for the linear oriented quiver $Q$ of type $A_3$. Maximal green sequences are the oriented paths starting at the vertex labeled $\Lambda[1]$ and ending in the vertex labeled $\Lambda$. In particular, there are maximal green sequences of lengths 3, 4, 5 and 6.
	
\begin{figure}[h]
\begin{center}

\begin{tikzpicture}[scale = .65]
	\begin{scope}[decoration={
		markings,
		mark=at position 0.6 with {\arrow{angle 45}}}
		] 
		\foreach \y in {0} 
		{
		\fill (3,\y+1) circle (.1);
		\fill (3,\y-1) circle (.1);

		\fill (7,\y+1) circle (.1);
		\fill (7,\y-1) circle (.1);

		\fill (11,\y+1) circle (.1);
		\fill (11,\y-1) circle (.1);

		\fill (7,\y-3) circle (.1);
		\fill (7,\y+3) circle (.1);

		\foreach \x in {2,4,6,8,10,12}
		{ 
		\fill (\x,\y) circle (.1);
		}

		\draw (3,\y-1) node[anchor=north east]{$\Lambda[1]$} ;
		\draw (11,\y+1) node[anchor=south west]{$\Lambda$} ;

		\draw[postaction={decorate}] (2,\y) -- (3,\y+1) node[annfar,midway,anchor=south east]{$3$} ; 
		\draw[postaction={decorate}] (4,\y) -- (3,\y+1) node[annfar,midway,anchor=south west]{$1$} ;

		\draw[postaction={decorate}] (3,\y-1) -- (2,\y) node[annfar,midway,anchor=north east]{$1$} ;
		\draw[postaction={decorate}] (3,\y-1) -- (4,\y) node[annfar,midway,anchor=north west]{$3$} ;

		\draw[postaction={decorate}] (4,\y) -- (6,\y) node[annfarer,midway,above]{$32$} ;

		\draw[postaction={decorate}] (6,\y) -- (7,\y+1) node[annfar,midway,anchor=south east]{$321$} ;
		\draw[postaction={decorate}, color=blue!75!black] (6,\y) -- (7,\y-1) node[annfar,midway,anchor=north east]{$2$} ;
		\draw[postaction={decorate}, color=blue!75!black] (7,\y+1) -- (8,\y) node[annfar,midway,anchor=south west]{$2$} ;
		\draw[postaction={decorate}] (7,\y-1) -- (8,\y) node[annfar,midway,anchor=north west]{$321$} ;

		\draw[postaction={decorate}] (8,\y) -- (10,\y) node[annfarer,midway,above]{$21$} ;

		\draw[postaction={decorate}] (10,\y) -- (11,\y+1) node[annfar,midway,anchor=south east]{$1$} ;
		\draw[postaction={decorate}] (11,\y-1) -- (10,\y) node[annfar,midway,anchor=north east]{$3$} ;
		\draw[postaction={decorate}] (12,\y) -- (11,\y+1) node[annfar,midway,anchor=south west]{$3$} ;
		\draw[postaction={decorate}] (11,\y-1) -- (12,\y) node[annfar,midway,anchor=north west]{$1$} ;

		\draw[postaction={decorate}, color=blue!75!black] (2,\y) -- (1,\y) node[annfarer,midway,above]{$2$} ;
		\draw[postaction={decorate}, color=blue!75!black] (13,\y) -- (12,\y) node[annfarer,midway,above]{$2$} ;

		\draw[postaction={decorate}] (3,\y+1) -- (7,\y+3) node[annfar,midway,anchor=south east]{$32$} ;
		\draw[postaction={decorate}] (7,\y+1) -- (7,\y+3) node[annfarer,midway,right]{$1$} ;
		\draw[postaction={decorate}, color=blue!75!black] (7,\y+3) -- (11,\y+1) node[annfar,midway,anchor=south west]{$2$} ;

		\draw[postaction={decorate}, color=blue!75!black] (3,\y-1) -- (7,\y-3) node[annfar,midway,anchor=north east]{$2$} ;
		\draw[postaction={decorate}] (7,\y-3) -- (7,\y-1) node[annfarer,midway,left]{$3$} ;
		\draw[postaction={decorate}] (7,\y-3) -- (11,\y-1) node[annfar,midway,anchor=north west]{$21$} ;

		}
	\end{scope}
					
\end{tikzpicture}			
\caption{The oriented mutation graph of $Q:3\to 2\to 1$.}\label{fig04}
\label{oriented-graph3}
\end{center}
\end{figure}

On the other hand, the cyclic oriented simple graph $Q'=\mu_2Q$ with three vertices yields the oriented mutation graph shown in Figure 6. We see that the maximal green sequences are of lengths 4 or 5 in this case, even if the quivers $Q$ and $Q'$ are mutation equivalent. So it is certainly not the case that {\it every} maximal green sequence of $Q$ yields a maximal green sequence (of the same length) of the mutated quiver $Q'$.
The structural result implied by the Rotation Lemma is: The cyclically oriented quiver $Q'$ cannot occur along a maximal green sequence for $Q$ of length 3 or 6. In fact, the only quivers appearing along these maximal green sequences are the acyclic quivers of type $A_3$, and those admit sequences of length 3 and 6. Thus the membership to certain maximal green sequences yields a finer subdivision of the mutation class of $Q$.

Note how the Rotation Lemma can be observed in the figures: the orientation of the overall graph changes by applying mutation, but along a maximal oriented path of length $n$ the orientation stays the same in the last $n-1$ arrows.

\begin{figure}[h]
\begin{center}

\begin{tikzpicture}[scale = .65]
	\begin{scope}[decoration={
		markings,
		mark=at position 0.6 with {\arrow{angle 45}}}
		] 
		\foreach \y in {0} 
		{
		\fill (3,\y+1) circle (.1);
		\fill (3,\y-1) circle (.1);

		\fill (7,\y+1) circle (.1);
		\fill (7,\y-1) circle (.1);

		\fill (11,\y+1) circle (.1);
		\fill (11,\y-1) circle (.1);

		\fill (7,\y-3) circle (.1);
		\fill (7,\y+3) circle (.1);

		\foreach \x in {2,4,6,8,10,12}
		{ 
		\fill (\x,\y) circle (.1);
		}

		\draw (7,\y-3) node[anchor=north east]{$\Lambda'[1]$} ;
		\draw (7,\y+3) node[anchor=south west]{$\Lambda'$} ;

		\draw[postaction={decorate}] (2,\y) -- (3,\y+1) node[annfar,midway,anchor=south east]{$23$} ; 
		\draw[postaction={decorate}] (4,\y) -- (3,\y+1) node[annfar,midway,anchor=south west]{$1$} ;

		\draw[postaction={decorate}] (3,\y-1) -- (2,\y) node[annfar,midway,anchor=north east]{$1$} ;
		\draw[postaction={decorate}] (3,\y-1) -- (4,\y) node[annfar,midway,anchor=north west]{$23$} ;

		\draw[postaction={decorate}] (4,\y) -- (6,\y) node[annfarer,midway,above]{$3$} ;

		\draw[postaction={decorate}] (6,\y) -- (7,\y+1) node[annfar,midway,anchor=south east]{$31$} ;
		\draw[postaction={decorate}, color=blue!75!black] (7,\y-1) -- (6,\y) node[annfar,midway,anchor=north east]{$2$} ;
		\draw[postaction={decorate}, color=blue!75!black] (8,\y) -- (7,\y+1) node[annfar,midway,anchor=south west]{$2$} ;
		\draw[postaction={decorate}] (7,\y-1) -- (8,\y) node[annfar,midway,anchor=north west]{$31$} ;

		\draw[postaction={decorate}] (8,\y) -- (10,\y) node[annfarer,midway,above]{$1$} ;

		\draw[postaction={decorate}] (10,\y) -- (11,\y+1) node[annfar,midway,anchor=south east]{$12$} ;
		\draw[postaction={decorate}] (11,\y-1) -- (10,\y) node[annfar,midway,anchor=north east]{$3$} ;
		\draw[postaction={decorate}] (12,\y) -- (11,\y+1) node[annfar,midway,anchor=south west]{$3$} ;
		\draw[postaction={decorate}] (11,\y-1) -- (12,\y) node[annfar,midway,anchor=north west]{$12$} ;

		\draw[postaction={decorate}, color=blue!75!black] (1,\y) -- (2,\y) node[annfarer,midway,above]{$2$} ;
		\draw[postaction={decorate}, color=blue!75!black] (12,\y) -- (13,\y) node[annfarer,midway,above]{$2$} ;

		\draw[postaction={decorate}] (3,\y+1) -- (7,\y+3) node[annfar,midway,anchor=south east]{$3$} ;
		\draw[postaction={decorate}] (7,\y+1) -- (7,\y+3) node[annfarer,midway,right]{$1$} ;
		\draw[postaction={decorate}, color=blue!75!black] (11,\y+1) -- (7,\y+3) node[annfar,midway,anchor=south west]{$2$} ;

		\draw[postaction={decorate}, color=blue!75!black] (7,\y-3) -- (3,\y-1) node[annfar,midway,anchor=north east]{$2$} ;
		\draw[postaction={decorate}] (7,\y-3) -- (7,\y-1) node[annfarer,midway,left]{$3$} ;
		\draw[postaction={decorate}] (7,\y-3) -- (11,\y-1) node[annfar,midway,anchor=north west]{$1$} ;

		}
	\end{scope}
					
\end{tikzpicture}

\caption{The oriented mutation graph of the cyclic quiver $Q'$ with 3 vertices.}\label{fig05}
\label{oriented-graph4}
\end{center}
\end{figure}

The situation becomes more dramatic in Greg Muller's example where we consider the acyclic quiver $Q_{0,1,2}:3 \rightrightarrows 2 \to 1$. It does admit the following maximal green sequences: $(2,1,3,2)$ which we already considered in the introduction, as well as $(2,1,2,3)$ and the minimal sink reflection sequence $(1,2,3)$. 
The quivers appearing along these maximal green sequences are $Q_{0,1,2}$ and its source-sink reflections, as well as $ Q\op_{2,1,2}$, where we denote by $Q_{a,b,c}$ the cyclic quiver with $a$ arrows from 1 to 3, $b$ arrows from 2 to 1 and $c$ arrows from 3 to 2.
These are exactly the quivers in the mutation class of $Q_{2,1,0}$ that admit maximal green sequences. The mutation class of $Q_{2,1,0}$ is infinite, and the minimal quiver in this class not having a maximal green sequence is the quiver $Q_{2,3,2}$ discussed in Corollary 2.3.3 of \cite{M}.
However, as illustrated in Figures 11 and 18 of \cite{M}, the quiver $Q_{2,3,2}$ has a 1-reddening sequence of length 6 passing through $Q_{2,1,0}$.


\section{The Target before Source Conjecture}

For proving the general form of the Target before Source Conjecture, it will be convenient to introduce the following notion.

\begin{defn}
The \emph{maximal green tail} of a reddening sequence $(\kay_0,\kay_1,\dots,\kay_{m-1})$ is the subsequence $(\kay_\ell,\kay_{\ell+1}, \dots,\kay_{m-1})$ where $\kay_{\ell-1}$ is the last red mutation.
\end{defn}

Note that the maximal green tail of a reddening sequence need not itself be a maximal green sequence, as illustrated in the following example.

\begin{eg}
In the Kronecker quiver $2\rightrightarrows1$ the sequence $(1,2,1,1)$ is a reddening sequence. Its maximal green tail is the single mutation $(1)$ which is green, but not maximal.
\end{eg}

\begin{defn}
For an arrow $\alpha:\jay\to\aye$ of a valued quiver $Q$, denote by $Q[\alpha]$ the rank 2 quiver consisting of the single arrow $\alpha:\jay\to\aye$ with the same valuation as in $Q$.\\
An arrow $\alpha:\jay\to\aye$ with valuation $(d_{\jay\aye},d_{\aye\jay})$ is of \emph{infinite type} if $d_{\jay\aye}d_{\aye\jay}\ge 4$, or equivalently if $Q[\alpha]$ is representation infinite.
\end{defn}

\subsection{Recursion for rank 2 preinjective roots}

In order to prove the Target before Source Conjecture we need to relate the $c$-vectors of the reddening sequence to the preinjective roots of the rank 2 quiver $Q[\alpha]$ where $\alpha$ is an infinite type arrow of $Q$. Throughout this subsection we fix an infinite type arrow $\alpha:\jay\to\aye$ of $Q$. For simplicity, set $a=d_{\aye\jay}$ and $b=d_{\jay\aye}$.

The preinjective roots of $Q[\alpha]$ are linearly ordered by position in the Auslander-Reiten quiver. Denote by $q_t$ the root of $Q$ obtained by extending the $t$-th preinjective root of $Q[\alpha]$ by zero on vertices $k\ne \aye,\jay$. In particular we have $q_0=e_{\jay}$. 

In order to give a uniform description of the vectors $q_t$, we introduce the following family of polynomials.

\begin{defn}\label{chebyshev}
Define a family of Chebyshev-like polynomials $U_n(x,y)$ for $n\ge -1$ by $U_{-1}(x,y)=0$, $U_0(x,y)=1$ and for $n\ge 1$ by the recursion
\[
U_n(x,y)=xU_{n-1}(y,x)-U_{n-2}(x,y).
\]
\end{defn}

\begin{rem}
The ordinary Chebyshev polynomials (of the second kind) $U_n(x)$ are recovered from the $U_n(x,y)$ by the substitution $x,y\mapsto x/2$. The normalization factor ${1\over2}$ is chosen to simplify the following formula for the roots $q_t$.
\end{rem}

\begin{lem}
The roots $q_t$ of $Q$ have coordinates $q_t(\aye)=U_{t-1}(b,a)$, $q_t(\jay)=U_t(a,b)$ and $q_t(k)=0$ for $k\neq \jay,\aye$.
\end{lem}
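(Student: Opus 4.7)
The plan is to proceed by induction on $t$, with the bulk of the work being to match the mesh relations of the AR quiver of the rank 2 acyclic valued quiver $Q[\alpha]$ against the polynomial recursion defining $U_n(x,y)$.

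First I would verify the base cases. For $t=0$ the identity $q_0=e_{\jay}$ gives $q_0(\aye)=0=U_{-1}(b,a)$ and $q_0(\jay)=1=U_0(a,b)$, which matches the formula. The assertion $q_t(\kay)=0$ for $\kay\ne\aye,\jay$ is immediate from the definition of $q_t$ as the extension by zero of a preinjective root of $Q[\alpha]$. For $t=1$, since $q_0=S_{\jay}$ is the injective at the source $\jay$, the next preinjective in the AR quiver is obtained from the almost split sequence with right-hand term $q_1$, yielding dimension vector $q_1=e_{\aye}+a\cdot e_{\jay}$, that is, $q_1(\aye)=1=U_0(b,a)$ and $q_1(\jay)=a=U_1(a,b)$, as required.

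For the induction step, I would use the mesh (almost split) relations in the preinjective component of the AR quiver of the rank 2 modulated hereditary algebra $Q[\alpha]$ with valuation $(d_{\jay\aye},d_{\aye\jay})=(b,a)$. In this rank 2 situation the preinjective component is a half-line and the almost split sequences give the coupled relations
\begin{align*}
q_t(\jay)+q_{t-2}(\jay) &= a\cdot q_t(\aye),\\
q_t(\aye)+q_{t-2}(\aye) &= b\cdot q_{t-2}(\jay),
\end{align*}
valid for all $t\ge2$. These are standard for acyclic rank 2 modulated quivers (Dlab--Ringel), reflecting the fact that the Coxeter transformation acts on the orbit by a $2\times 2$ linear map encoded by the valuation.

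To close the induction, set $P_t:=U_t(a,b)$ and $Q_t:=U_t(b,a)$. The defining recursion $U_n(x,y)=x\,U_{n-1}(y,x)-U_{n-2}(x,y)$ from Definition \ref{chebyshev} immediately yields $P_t=a\,Q_{t-1}-P_{t-2}$ and $Q_t=b\,P_{t-1}-Q_{t-2}$. Under the identifications $q_t(\jay)\leftrightarrow P_t$ and $q_t(\aye)\leftrightarrow Q_{t-1}$, these are exactly the mesh relations above, so the inductive hypothesis $q_{t-1}(\jay)=P_{t-1}$, $q_{t-1}(\aye)=Q_{t-2}$ (and similarly at $t-2$) combined with the mesh relations forces $q_t(\jay)=P_t=U_t(a,b)$ and $q_t(\aye)=Q_{t-1}=U_{t-1}(b,a)$, completing the proof.

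The main obstacle is really just bookkeeping: stating the mesh relations in a form that has the valuations $a$ and $b$ in the right places requires care, since the modulation is non-symmetric when $a\ne b$ and the AR sequences at $q_t$ alternate in structure with the parity of $t$. Once the two mesh relations above are recorded, the identification with the polynomial recursion is a mechanical check.
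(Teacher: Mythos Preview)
Your proof is correct and follows essentially the same approach as the paper: both verify the base cases $t=0,1$ via the injective dimension vectors, then use the Auslander--Reiten translate relation $q_t=\tau q_{t-2}$ (which you phrase equivalently as the two mesh relations) to match the recursion defining $U_n(x,y)$. The paper writes down the Coxeter matrix $\tau=\begin{pmatrix}-1&b\\-a&ab-1\end{pmatrix}$ explicitly and computes, whereas you encode the same linear map through the pair of coupled mesh identities, but the content is identical.
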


\begin{proof}
Number the vertices of $Q$ so that $\aye=1$ and $\jay=2$. With this numbering, the Auslander-Reiten translate $\tau$ of $Q$ is given on dimension vectors  by
\[
\tau=\mat{-1 & b & * \\ -a & ab-1 & * \\ * & * & * }.
\]
In the quiver $Q[\alpha]$ the injective roots are $\undim I_2=\mat{ 0 \\ 1}$ and $\undim I_1=\mat{ 1 \\ a}$, so the lemma is true for $t=0,1$. For $t\ge2$ the roots $q_t$ are related by $q_t=\tau q_{t-2}$. Thus the coordinates of the $q_t$ satisfy the simultaneous recursion
\begin{equation*}
\begin{aligned}
q_t(\aye)&=bq_{t-2}(\jay)-q_{t-2}(\aye) \\
q_t(\jay)&=(ab-1)q_{t-2}(\jay)-aq_{t-2}(\aye)=aq_t(\jay)-q_{t-2}(\jay).
\end{aligned}
\end{equation*}

By induction we have 
\begin{equation*}
\begin{aligned}
q_t(\aye)&=bU_{t-2}(a,b)-U_{t-3}(b,a)=U_{t-1}(b,a) \\
q_t(\jay)&=aU_{t-1}(b,a)-U_{t-2}(a,b)=U_t(a,b)
\end{aligned}
\end{equation*}
proving the lemma.
\end{proof}

Denote by $q'_t$ for $t\ge 0$ the extension of the preinjective roots of $Q'[\alpha']$ to $Q'=\mu_{\jay}Q$ where $\alpha':\aye\to\jay$. Denote by $q_{-1}$ the vector with $q_{-1}(\aye)=-1$ and all other coordinates 0. The following lemma explains the relationship between the roots $q_t$ of $Q$ to the roots $q'_t$ of $Q'$.

\begin{lem}\label{preinjectives}
For every $t\ge0$ one has $q'_{t-1}=X_{\jay}^+q_t$ where $X_{\jay}^+$ is as in Definition \ref{Xmatrix}.
\end{lem}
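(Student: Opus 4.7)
My plan is to verify the identity coordinate-by-coordinate using the explicit formula for preinjective roots from the previous lemma. Since $X_\jay^+$ differs from the identity matrix only in its $\jay$-th row, and since both $q_t$ and $q'_{t-1}$ are supported on $\{\aye, \jay\}$ by construction, it suffices to compare the $\aye$- and $\jay$-coordinates.

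First I would compute the preinjective roots $q'_s$ of $Q'[\alpha']$ by reapplying the previous lemma to the rank 2 quiver $Q'[\alpha']$. This requires tracking how the valuation changes under $\mu_\jay$. Starting with $\alpha:\jay\to\aye$ of valuation $(b,a)$, the matrix entries are $b_{\jay\aye} = a$ and $b_{\aye\jay} = -b$; after $\mu_\jay$ these signs flip, so in $Q'$ the reversed arrow $\alpha':\aye\to\jay$ carries valuation $(a,b)$. The roles of source-to-target and target-to-source valuations are therefore swapped. Applying the formula for preinjective roots to $Q'[\alpha']$ (now with source $\aye$ and target $\jay$) yields
\[
q'_{t-1}(\aye) = U_{t-1}(b, a), \qquad q'_{t-1}(\jay) = U_{t-2}(a, b).
\]

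On the right-hand side, $(X_\jay^+)_{\jay\aye} = \max(b_{\jay\aye},0) = a$ while the other entries of the $\jay$-th row vanish on the support of $q_t$, so
\[
(X_\jay^+ q_t)_\aye = q_t(\aye), \qquad (X_\jay^+ q_t)_\jay = -q_t(\jay) + a\, q_t(\aye).
\]
Substituting $q_t(\aye) = U_{t-1}(b,a)$ and $q_t(\jay) = U_t(a,b)$, the $\aye$-equation matches $q'_{t-1}(\aye)$ immediately. The $\jay$-equation reduces to the identity
\[
-U_t(a,b) + a\, U_{t-1}(b,a) = U_{t-2}(a,b),
\]
which is exactly the defining Chebyshev-like recursion $U_t(a,b) = a\,U_{t-1}(b,a) - U_{t-2}(a,b)$.

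The main obstacle is bookkeeping the valuations carefully after mutation: the subtle point is that under $\mu_\jay$ the two valuation parameters swap, and this swap is precisely what causes the Chebyshev recursion to align with the action of $X_\jay^+$. The only indexing subtlety is the base case $t=0$, where $q'_{-1}$ should be interpreted as $-e_\jay$ (consistent with extending the recursion via $U_{-2}(x,y) = -1$); one then verifies directly $X_\jay^+ q_0 = X_\jay^+ e_\jay = -e_\jay$ from the definition of $X_\jay^+$.
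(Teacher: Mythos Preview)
Your proposal is correct and follows essentially the same approach as the paper: compute $X_\jay^+ q_t$ coordinate-by-coordinate and invoke the Chebyshev recursion $U_t(a,b)=aU_{t-1}(b,a)-U_{t-2}(a,b)$ to match the $\jay$-entry, with the $t=0$ case handled separately. The only cosmetic difference is that you first derive $q'_{t-1}(\aye)=U_{t-1}(b,a)$ and $q'_{t-1}(\jay)=U_{t-2}(a,b)$ by reapplying the previous lemma to $Q'[\alpha']$ (tracking the swap of valuations under $\mu_\jay$), whereas the paper writes $X_\jay^+$ as an explicit matrix with $\aye=1$, $\jay=2$ and reads off the same identities directly from the product.
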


\begin{proof}
Upon numbering the vertices of $Q$ so that $1=\aye$ and $2=\jay$, the matrix $X_\jay^+$ is given by 
\[
X_{\jay}^+=\mat{1 & 0 & 0 \\ a & -1 & * \\ 0 & 0 & I_{n-2}}
\]
and so
\[
X_{\jay}^+q_t=\mat{1 & 0 & 0 \\ a & -1 & * \\ 0 & 0 & I_{n-2}}\mat{U_{t-1}(b,a) \\ U_t(a,b) \\ 0}=\mat{U_{t-1}(b,a) \\ aU_{t-1}(b,a)-U_t(a,b) \\ 0}=\mat{U_{t-1}(b,a) \\ U_{t-2}(a,b) \\ 0}=q'_{t-1}
\]
provided that $t>0$. When $t=0$ one calculates
\[
X_{\jay}^+q_t=\mat{1 & 0 & 0 \\ a & -1 & * \\ 0 & 0 & I_{n-2}}\mat{0 \\ 1 \\ 0}=\mat{0 \\ -1 \\ 0}=q'_{-1}
\]
proving the lemma.
\end{proof}

\subsection{Target before source for reddening sequences}

To prove the Target before Source Conjecture we first prove the more general Theorem \ref{thm:ibeforej} which states that mutation at the target comes before mutation at the source in the green tail of a reddening sequence.

\begin{lem}\label{lemmaD}
Suppose $Q$ is a valued quiver, $\alpha:\jay\to\aye$ is a fixed arrow and $\mgs{\kay}$ is a reddening sequence. Consider the two reddening sequences
\begin{enumerate}
\renewcommand{\theenumi}{\alph{enumi}}
\item $\mgs{\kay}'=(\jay,\jay,\kay_0,\kay_1,\dots,\kay_{m-1})$ which is a reddening sequence of $Q$
\item $\mgs{\kay}''$ which is the reddening sequence of $Q'=\mu_{\jay}Q$ given by rotating $\mgs{\kay}'$.
\end{enumerate}

\noindent If the $c$-vectorx $e_{\jay},e_\aye$ occurs at vertices $k_\ell,k_p$ in the maximal green tail of $\mgs{\kay}'$ with $p>\ell$, then:
\begin{enumerate}
\item $C_\ell\in H_\aye^+\cap H_\jay^+$
\item $C'_\ell\in H_\aye^+\cap H_\jay^-$
\item the $c$-vector $e_{\aye}$ occurs before the $c$-vector $e_{\jay}$ in the maximal green tail of $\mgs{\kay}''$ (\emph{i.e.}, in the opposite order in which they occur in $\mgs{\kay}'$).
\end{enumerate}
\end{lem}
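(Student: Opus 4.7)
The plan is to establish (1), (2), (3) in order, using the Mutation Formula (Theorem \ref{formula}) to compare the $c$-matrices of $\mgs{\kay}'$ and $\mgs{\kay}''$ together with the hemisphere tools from Lemma \ref{hemispheres}. The first two parts are essentially bookkeeping; part (3) is the main obstacle because statement (a) in the proof of Theorem \ref{formula} only guarantees that aligned columns of $C_s$ and $C'_s$ share the same sign (with $\pm e_\jay$ flipped), so one cannot simply read off where $e_\aye$ appears in $\mgs{\kay}''$ from its position in $\mgs{\kay}'$.

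For (1), since $\mgs{\kay}'$ mutates $e_\jay$ at position $\ell$, the $k_\ell$-th column of $C_\ell$ equals $e_\jay$, and Lemma \ref{hemispheres}(1) yields $C_\ell\in H_\jay^+$; likewise $C_p\in H_\aye^+$. The transition $H_\aye^-\to H_\aye^+$ is forbidden in a green tail (it would require a red mutation at $-e_\aye$), so $C_\ell\in H_\aye^-$ would force $C_s\in H_\aye^-$ for all $\ell\le s\le p$, contradicting $C_p\in H_\aye^+$. Hence $C_\ell\in H_\aye^+$.

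For (2), Theorem \ref{formula} with $\varepsilon(\ell)=-$ (from $C_\ell\in H_\jay^+$) gives $C'_\ell=X_\jay^- C_\ell$, so the $k_\ell$-th column of $C'_\ell$ equals $X_\jay^- e_\jay=-e_\jay$ and $C'_\ell\in H_\jay^-$ by Lemma \ref{hemispheres}(1). For $C'_\ell\in H_\aye^+$, note that $C\in H_k^+$ iff the $k$-th column of $C^{-1}$ is nonnegative (from $G=(DC^{-1}D^{-1})^t$ together with sign coherence of $g$-vectors). Since $(C'_\ell)^{-1}=C_\ell^{-1}X_\jay^-$ and $b_{\jay\aye}=d_{\aye\jay}>0$ makes $(X_\jay^-)_{\jay\aye}=\max(-b_{\jay\aye},0)=0$, the $\aye$-th column of $X_\jay^-$ is $e_\aye$, and the $\aye$-th column of $(C'_\ell)^{-1}$ coincides with the nonnegative $\aye$-th column of $C_\ell^{-1}$.

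For (3), hemisphere tracking in the green tail of $\mgs{\kay}'$ forces $\ell$ to be its unique $+e_\jay$ mutation (a second would require an intervening red $-e_\jay$). By statement (a) this translates into a red $-e_\jay$ mutation of $\mgs{\kay}''$ at the step $C'_\ell\to C'_{\ell+1}$; any other red of $\mgs{\kay}''$ arises from a red mutation of $\mgs{\kay}'$ at some $s<\ell$ and occurs at a strictly earlier step. The final extra mutation $\sigma^{-1}(\jay)$ of $\mgs{\kay}''$ takes the penultimate $c$-matrix $C'_{m+2}=X_\jay^+(-P_\sigma)$ to $-P_\sigma$, and the direct calculation $-X_\jay^+P_\sigma e_{\sigma^{-1}(\jay)}=-X_\jay^+e_\jay=e_\jay$ shows this is a green mutation at $+e_\jay$. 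Hence the green tail of $\mgs{\kay}''$ consists of the mutations starting from $C'_{\ell+1}$, and the hemisphere argument makes $+e_\jay$ appear there only at the final step. Finally, by (2) $C'_\ell\in H_\aye^+$, and since mutating $-e_\jay\neq\pm e_\aye$ preserves the $\aye$-hemisphere, $C'_{\ell+1}\in H_\aye^+$; as the final $c$-matrix $-P_\sigma\in H_\aye^-$, exactly one $+e_\aye$ mutation occurs in this green tail, and it must come strictly before the final $+e_\jay$ mutation.
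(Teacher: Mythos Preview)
Your proof is correct and follows the same approach as the paper: hemisphere bookkeeping via Lemma~\ref{hemispheres} for (1), the Mutation Formula for (2) (you phrase it in terms of columns of $C^{-1}$ rather than of $G^t$, which is equivalent since $G^t=DC^{-1}D^{-1}$ with $D$ positive diagonal), and tracking the $H_\aye$-crossing in the rotated sequence for (3). Your treatment of (3) is in fact more thorough than the paper's compressed argument: you explicitly establish that the green tail of $\mgs{\kay}''$ begins precisely at step $\ell+1$, whereas the paper only exhibits an $e_\aye$-mutation after step $\ell$ and an $e_\jay$-mutation at the final step without verifying that both lie in the green tail of $\mgs{\kay}''$.
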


\begin{proof}

(1) By Lemma \ref{hemispheres}, the $c$-matrix $C_\ell\in H_\aye^+$ and $C_p\in H_\jay^+$. However, all of the mutations in the maximal green tail of $\mgs{\kay}$ are green and so the $c$-matrices cannot go from $H_{\jay}^-$ to $H_{\jay}^+$. Since $\ell<p$, we have $C_\ell\in H_{\jay}^+$ as claimed.

(2) Since $C_\ell\in H_\jay^+$, we have $C_\ell'=X_\jay^-C_\ell$ where
\[ 
X_\jay^+=\mat{1 & 0 & 0 \\ a & -1 & * \\ 0 & 0 & I_{n-2}} \qquad \text{ and } \qquad X_\jay^-=\mat{1 & 0 & 0 \\ 0 & -1 & * \\ 0 & 0 & I_{n-2}}
\]
(assuming $\aye=1$, $\jay=2$ for simplicity). So, $(G'_\ell)^t=D(C'_\ell)^{-1}D^{-1}=DC_\ell^{-1}X_\jay^{-1}D^{-1}=G_\ell^tDX_j^-D^{-1}$. By (1), columns $\aye$ and $\jay$ of $G_\ell^t$ are positive. So, columns $\aye$ and $\jay$ of $(G'_\ell)^t$ are positive and negative, respectively. This is equivalent to (2).

(3) By (2) $C_\ell'\in H^-_\jay$. Since all mutations in the maximal green tail of $\mgs{\kay}$ are green, the last $c$-matrix is in $H_i^-$. By Lemma \ref{hemispheres} there is a $q>\ell$ so that the mutation $\kay_q$ of $C_q'$ is at $c$-vector $e_i$. The last mutation of $\mgs{\kay}''$ is at the $c$-vector $e_j$ by rotation, proving the lemma.
\end{proof}

We now turn to the main theorem of this section.

\begin{thm}\label{thm:ibeforej}
Suppose that $Q$ is a valued quiver having an infinite type arrow $\alpha:\jay\to\aye$ and $\mgs{\kay}=(\kay_0,\kay_1,\dots,\kay_{m-1})$ is a reddening sequence in which the $c$-vector sequence of the maximal green tail of $\mgs{\kay}$ contains the simple roots $e_i$ and $e_j$. Then $e_{\aye}$ must occur before $e_{\jay}$.
\end{thm}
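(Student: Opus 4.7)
We argue by contradiction. Assume $e_{\jay}$ occurs at position $\ell$ and $e_{\aye}$ at position $p>\ell$ in the maximal green tail of the reddening sequence $\mgs{\kay}=(\kay_0,\ldots,\kay_{m-1})$.

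The plan is to apply Lemma \ref{lemmaD} iteratively. First, form the reddening sequence $\mgs{\kay}'=(\jay,\jay,\kay_0,\ldots,\kay_{m-1})$ for $Q$: the two leading mutations at $\jay$ compose to the identity on the $c$-matrix, so the maximal green tail of $\mgs{\kay}'$ agrees with that of $\mgs{\kay}$ and the hypothesis of Lemma \ref{lemmaD} is satisfied. Rotating $\mgs{\kay}'$ at the initial $\jay$ via Theorem \ref{rotation} yields a reddening sequence $\mgs{\kay}''$ on $Q^{(1)}:=\mu_\jay Q$, and Lemma \ref{lemmaD}(3) guarantees that $e_\aye$ now precedes $e_\jay$ in the maximal green tail of $\mgs{\kay}''$. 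The arrow in $Q^{(1)}$ is reversed to $\alpha'\colon \aye\to\jay$, still of infinite type since the product $d_{\aye\jay}d_{\jay\aye}$ is preserved, so $\mgs{\kay}''$ is a configuration of the same form as $\mgs{\kay}$, but with the roles of $\aye$ and $\jay$ swapped.

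I would then iterate the same construction: apply Lemma \ref{lemmaD} to $\mgs{\kay}''$ with $\aye$ playing the role of $\jay$, producing a reddening sequence on $Q^{(2)}:=\mu_\aye\mu_\jay Q$ in which $e_\jay$ again precedes $e_\aye$. Continuing, we obtain a cascade $\mgs{\kay}^{(n)}$ of reddening sequences on $Q^{(n)}$, alternating the ordering of the simple roots $e_\aye,e_\jay$ in their maximal green tails.

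The infinite type hypothesis enters through Lemma \ref{preinjectives} together with the Chebyshev recursion of Definition \ref{chebyshev}. Tracking the Mutation Formula (Theorem \ref{formula}) through each rotation, the $c$-vectors at the positions in $\mgs{\kay}^{(n)}$ labeled by $e_\aye$ and $e_\jay$, pulled back to the basis of $Q$, must evolve along the preinjective sequence $q_0=e_\jay,q_1,q_2,\ldots$ of the rank two subquiver $Q[\alpha]$ via the relation $q'_{t-1}=X_\jay^+ q_t$. The formulas $q_t(\aye)=U_{t-1}(b,a)$ and $q_t(\jay)=U_t(a,b)$, together with $ab\ge 4$, force both coordinates to be strictly positive for all $t\ge 1$, so no $q_t$ with $t\ge 1$ is a simple root. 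Therefore after sufficiently many rotations the position supposedly occupied by $e_\aye$ (or $e_\jay$) must be some $q_t$ with $t\ge 1$, contradicting its simplicity.

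The main obstacle is to pin down this pullback correspondence: one must use Theorem \ref{formula} and Lemma \ref{hemispheres} to verify that each rotation at the source of the infinite type arrow really does advance the relevant $c$-vector exactly one step along $q_0,q_1,q_2,\ldots$, rather than producing some unrelated vector outside the rank two subquiver. Once this tracking is established, the unboundedness of the preinjective coordinates for infinite-type rank two quivers delivers the contradiction and hence the theorem.
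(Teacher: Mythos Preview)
Your overall strategy---iterate Lemma~\ref{lemmaD} and track how the simple roots evolve through the preinjective sequence via Lemma~\ref{preinjectives}---is exactly the mechanism the paper uses. The gap is in your final sentence: the contradiction you state is not a contradiction. You write that after many rotations the position ``supposedly occupied by $e_{\aye}$ (or $e_{\jay}$) must be some $q_t$ with $t\ge1$, contradicting its simplicity.'' But nothing is contradicted here. That position carries the $c$-vector $e_{\aye}$ \emph{with respect to $Q^{(n)}$} and $q_t$ \emph{with respect to $Q$}; these are computed in different frames and there is no reason the pullback should be simple.

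The genuine contradiction---and this is what the paper extracts---is that each such position lies in the maximal green tail of the \emph{original} sequence $\mgs{\kay}$, where its $c$-vector is $q_t$. Since the $q_t$ are pairwise distinct when $ab\ge4$, you have located infinitely many distinct $c$-vectors inside a finite mutation sequence. The paper organizes this as a clean induction on $s$: assuming $q_0,\dots,q_s$ already appear in the green tail of $\mgs{\kay}$, one passes to $\mgs{\kay}''$, applies the inductive hypothesis there (with $\aye,\jay$ swapped) to locate $q'_s$, and then uses the Mutation Formula together with $X_{\jay}^+q'_s=q_{s+1}$ to conclude that $q_{s+1}$ appears back in $\mgs{\kay}$. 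Two points make this work that your sketch does not address: the position carrying $q'_s$ in $\mgs{\kay}''$ is never the appended last mutation (that one carries $e_{\jay}\ne q'_s$), so it genuinely corresponds to a position in $\mgs{\kay}$; and that position lies past $\ell$, hence in $H_{\jay}^-$, so the correct sign $\varepsilon=+$ in the Mutation Formula gives $c=X_{\jay}^+q'_s=q_{s+1}$ rather than $X_{\jay}^-q'_s$. Once you replace ``contradicting its simplicity'' with ``so the finite sequence $\mgs{\kay}$ contains every $q_t$,'' your argument and the paper's coincide.
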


\begin{proof}
Suppose $\mgs{\kay}$ is a reddening sequence for $Q$ in which $e_{\jay}$ occurs before $e_{\aye}$ in the maximal green tail of $\mgs{\kay}$. Let $\kay_\ell$ be the first vertex in the maximal green tail of $\mgs{\kay}$ with corresponding $c$-vector $e_{\jay}$.

We claim that for each integer $s\ge 0$ there is a subsequence 
\[
(\kay_{\ell_0},\kay_{\ell_1},\dots,\kay_{\ell_s})
\]
of the maximal green tail of $\mgs{\kay}$ with $\kay_{\ell_0}=\kay_\ell$ and corresponding $c$-vectors $c_{\ell_t}=q_t$ for $0\le t\le s$. This provides a contradiction, as the sequence $\mgs{\kay}$ is finite.

The claim is proven by induction on $s$. The statement holds when $s=0$ since $\kay_\ell=\kay_{\ell_0}=e_{\jay}=q_0$ by definition. Suppose that the claim holds for some $s$. Consider the extended reddening sequence $\mgs{\kay}'=(\jay,\jay,\kay_0,\kay_1,\dots,\kay_{m-1})$ as in Lemma \ref{lemmaD}.

By induction, there is a subsequence $(\kay_{\ell_0},\dots,\kay_{\ell_s})$ of the maximal green tail of $\mgs{\kay}$ (which is the same as that of $\mgs{\kay}'$), with corresponding $c$-vectors $q_t$ for $0\le t\le s$. By Lemma \ref{preinjectives} the subsequence $(\kay_1,\kay_2,\dots,\kay_s)$ of the tail of the rotated sequence $\mgs{\kay}''$ has associated $c$-vectors $q'_0,q'_1,\dots, q'_{s-1}$. Moreover, by Lemma \ref{lemmaD} $c'_{\kay_1}=e_{\aye}$ and mutation at $e_{\jay}$ occurs after $\kay_1$ so by induction there is a vertex $\kay_{\ell_{s+1}}$ of the maximal green tail of $\mgs{\kay}''$ with corresponding $c$-vector $q'_s$. Since the matrix $X_{\jay}^+$ satisfies $X_{\jay}^+X_{\jay}^+=I$, the $c$-vector of $Q$ corresponding to $\kay_{\ell_{s+1}}$ of the unrotated sequence $\mgs{\kay}'$ is $X_{\jay}^+q'_s=
q_{s+1}$. Thus by induction, the claim holds.
\end{proof}

\subsection{Proof of Target before Source Conjecture}

This conjecture derives its name from Corollary \ref{cor:ibeforej}, which follows from Theorem \ref{thm:ibeforej} using two lemmas.

\begin{lem}\label{lemmaA}
If $Q$ is any valued quiver then any maximal green sequence mutates at each simple root $e_k$ exactly once.
\end{lem}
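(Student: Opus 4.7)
The plan is to derive this as a direct consequence of Lemma \ref{one-more-time} together with the definition of a maximal green sequence. Recall that a maximal green sequence is precisely a $0$-reddening sequence, meaning every mutation $\kay_\mutindex$ is performed at a vertex whose corresponding $c$-vector is green, i.e., has all nonnegative entries. In particular, no mutation in a maximal green sequence can occur at the negative simple root $-e_k$, since $-e_k$ is a red $c$-vector.

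First, I would apply Lemma \ref{one-more-time} to the reddening sequence in question: for each fixed $k \in \{1,\dots,n\}$, the number of mutations at the $c$-vector $+e_k$ exceeds the number of mutations at the $c$-vector $-e_k$ by exactly one. Then, using the fact that a maximal green sequence contains no red mutations whatsoever, the count of mutations at $-e_k$ is zero. Plugging this into the equation from Lemma \ref{one-more-time} yields that the count of mutations at $+e_k$ is exactly one.

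This establishes the claim: in a maximal green sequence each simple root $e_k$ appears exactly once as the $c$-vector at which a mutation is performed. There is no real obstacle here; the entire content is already packaged in Lemma \ref{one-more-time}, and this lemma is essentially the specialization of that statement to the case of $0$-reddening sequences. The only thing worth being careful about is the interpretation of the phrase ``mutates at $e_k$'' — it refers to a mutation step whose associated $c$-vector (column of the current $c$-matrix indexed by the vertex being mutated) equals $+e_k$, not to mutation at the $k$-th vertex of the quiver itself.
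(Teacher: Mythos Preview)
Your proof is correct and takes essentially the same approach as the paper: both rely on the hemisphere-crossing argument encoded in Lemma~\ref{hemispheres}. The paper's proof applies that argument directly (the sequence starts in $H_k^+$, ends in $H_k^-$, and crossings correspond to mutations at $\pm e_k$, with only $+e_k$ possible in a green sequence), whereas you route through Lemma~\ref{one-more-time}, which is itself proved by the same hemisphere count---so the difference is purely presentational.
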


\begin{proof} 
A maximal green sequence crosses each hyperplane $H_k$. By Lemma \ref{hemispheres}, crossing $H_k$ amounts to mutating at the simple root $\pm e_k$. 
Since only green mutations are being performed, all of these mutations must be at $+e_k$. The maximal green sequence starts on the $+$-side of all hyperplanes, end on the $-$-side of all hyperplanes, and so must mutate each $e_k$.
\end{proof}

\begin{cor}\label{cor:general conjecture 1}
Consider any maximal green sequence on any valued quiver $Q$. Then, at each step, the mutation is at a vertex of the mutated quiver $Q'$ which is not the source of any arrow of infinite type.
\end{cor}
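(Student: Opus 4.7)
The plan is to reduce the statement at the $s$-th step of the maximal green sequence to the statement at the very first step, using the Rotation Lemma, and then to derive a contradiction from Theorem \ref{thm:ibeforej} together with Lemma \ref{lemmaA}.

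First I would fix a maximal green sequence $\mgs{\kay}=(\kay_0,\dots,\kay_{m-1})$ on $Q$ and, for $\mutindex\ge 0$, set $Q_\mutindex = \mu_{\kay_{\mutindex-1}}\cdots\mu_{\kay_0}Q$ (with $Q_0=Q$). Applying the Rotation Lemma (Theorem \ref{rotation}) $\mutindex$ times in succession produces a maximal green sequence on $Q_\mutindex$ whose first mutation is at vertex $\kay_\mutindex$; the crucial point is that the Rotation Lemma preserves the property of being maximal green (the $r=0$ case of the lemma's statement). Thus it suffices to prove the claim when $\mutindex=0$, \emph{i.e.}, to establish the following reduced statement: if a maximal green sequence on a valued quiver $R$ starts with mutation at vertex $\kay$, then $\kay$ is not the source of any arrow of infinite type in $R$.

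For the reduced claim I would argue by contradiction. Suppose that $R$ contains an arrow $\alpha:\kay\to\aye$ of infinite type. Since the initial $c$-matrix of a maximal green sequence is $I_n$, the $c$-vector mutated at the first step is $e_\kay$. By Lemma \ref{lemmaA}, the sequence also mutates at the simple root $e_\aye$ at some later step. Because the sequence is maximal green, its maximal green tail is the entire sequence, so Theorem \ref{thm:ibeforej}, applied to the infinite-type arrow $\alpha$, forces $e_\aye$ to occur strictly before $e_\kay$ in the sequence. This contradicts the fact that $e_\kay$ is the very first $c$-vector, completing the proof.

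The substantive content of the corollary is entirely absorbed into Theorem \ref{thm:ibeforej}; once that is available, no genuine obstacle remains. The only verification I would make carefully is that iterated rotation at steps $\kay_0,\kay_1,\dots,\kay_{\mutindex-1}$ does indeed yield a maximal green sequence on $Q_\mutindex$ whose first letter is $\kay_\mutindex$, which is immediate by induction on $\mutindex$ from the Rotation Lemma.
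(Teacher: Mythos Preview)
Your proof is correct and follows essentially the same approach as the paper: rotate so that the step in question becomes the first mutation, then invoke Lemma~\ref{lemmaA} to guarantee a later mutation at $e_\aye$ and derive a contradiction from Theorem~\ref{thm:ibeforej}. The paper's argument is just a terser version of what you wrote.
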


\begin{proof}
If this occurs, use the Rotation Lemma to make $Q'$ the initial quiver. Then we have an arrow of infinite type $j\to i$ and the first mutation is at $c$-vector $e_j$. There must be a mutation at $e_i$ later by the lemma above, contradicting Theorem \ref{thm:ibeforej}.
\end{proof}

We now restrict to acyclic quivers since, by the example in Figure \ref{fig02}, the following lemma and corollary do not hold for quivers with oriented cycles.

\begin{lem}\label{lemmaB}
Suppose $Q$ is an acyclic valued quiver having an arrow $\alpha:\jay\to\aye$ of infinite type. If $\kay_1,\kay_2,\dots, \kay_s$ is any sequence of vertices with each $\kay_t\ne\aye,\jay$, then $Q'=\mu_{\kay_s}\mu_{\kay_{s-1}}\cdots\mu_{\kay_1}Q$ has an arrow $\alpha':\jay\to\aye$ of infinite type.
\end{lem}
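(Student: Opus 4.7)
My plan is to proceed by induction on $s$. The case $s=0$ is trivial. The case $s=1$ carries the essential geometric content: the mutation formula alters $b_{\jay\aye}$ only when $b_{\jay\kay}b_{\kay\aye}>0$. If $b_{\jay\kay},b_{\kay\aye}>0$ then there is a path $\jay\to\kay\to\aye$ in $Q$ running parallel to $\alpha$, and the formula gives $b^{(1)}_{\jay\aye}=b_{\jay\aye}+b_{\jay\kay}b_{\kay\aye}\ge b_{\jay\aye}$, strengthening the arrow. If instead $b_{\jay\kay},b_{\kay\aye}<0$ there would be arrows $\kay\to\jay$ and $\aye\to\kay$, which together with $\alpha:\jay\to\aye$ form an oriented $3$-cycle---impossible by the acyclicity of $Q$. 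Hence after a single mutation at $\kay\ne\aye,\jay$ the arrow persists with valuation at least as large as before, so in particular it remains of infinite type.

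For the inductive step with $s\ge 2$ the intermediate quiver $Q^{(s-1)}$ need not be acyclic, so the $s=1$ analysis does not apply to the last step in isolation. Instead I would bypass the intermediate quivers and compare $B^{(s)}$ with the original $B$ directly via the Nakanishi--Zelevinsky identity $B^{(s)}=D^{-1}(C^{(s)})^{t}DB\,C^{(s)}$. The key structural observation is that, because we never mutate at $\aye$ or $\jay$, the rows of $C^{(t)}$ indexed by $\aye$ and $\jay$ remain equal to $e_{\aye}^{\,t}$ and $e_{\jay}^{\,t}$ for every $t$. Indeed, by Lemma~\ref{mujC} each step writes $C^{(t+1)}=C^{(t)}X_{\kay}^{\pm}$ with $\kay\ne\aye,\jay$, and $X_{\kay}^{\pm}$ agrees with $I_{n}$ in every row other than its $\kay$-th, so right-multiplication by $X_{\kay}^{\pm}$ preserves rows $\aye$ and $\jay$. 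Starting from $C^{(0)}=I$ this property carries through the entire sequence. Combined with sign-coherence (which makes the $\aye$-th and $\jay$-th columns of $C^{(s)}$ nonnegative, since both contain the entry $1$), this gives $C^{(s)}$ the block decomposition $\bigl(\begin{smallmatrix}I_{2} & 0\\ M & C'\end{smallmatrix}\bigr)$ with respect to the index partition $\{\aye,\jay\}\cup O$, where $O:=\{1,\dots,n\}\setminus\{\aye,\jay\}$, and $M$ is a nonnegative $(n-2)\times 2$ block whose columns are $\bar u:=(c^{(s)}_{\jay})_{O}$ and $\bar v:=(c^{(s)}_{\aye})_{O}$.

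Substituting this block form into the Nakanishi--Zelevinsky identity (and viewing $\bar u,\bar v$ as $n$-vectors supported on $O$) yields the explicit decomposition
\[
B^{(s)}_{\jay\aye}\;=\;B_{\jay\aye}+\Delta^{(s)},\qquad
\Delta^{(s)}\;=\;d_{\jay}^{-1}\bigl(e_{\jay}^{\,t}DB\,\bar v+\bar u^{\,t}DB\,e_{\aye}+\bar u^{\,t}DB\,\bar v\bigr).
\]
The remaining, and hardest, step is to establish $\Delta^{(s)}\ge 0$. Once that is shown, skew-symmetrizability $d_{\aye}B^{(s)}_{\aye\jay}=-d_{\jay}B^{(s)}_{\jay\aye}$ upgrades $B^{(s)}_{\jay\aye}\ge B_{\jay\aye}>0$ to $|B^{(s)}_{\aye\jay}B^{(s)}_{\jay\aye}|\ge|B_{\aye\jay}B_{\jay\aye}|\ge 4$, so $\alpha':\jay\to\aye$ is still of infinite type. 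The acyclicity of $Q$ is essential for this last positivity---Figure~\ref{fig02} shows that on a quiver with an oriented cycle the analogous $\Delta^{(s)}$ can be negative enough to destroy the infinite-type arrow---and I expect the sign of $\Delta^{(s)}$ is cleanest to verify representation-theoretically, by reading it as an Euler-form count of homomorphisms minus extensions between the Schur modules of $kQ$ with dimension vectors $\bar u$ and $\bar v$ (using that $(c^{(s)}_{\aye})_{\aye}=(c^{(s)}_{\jay})_{\jay}=1$ and $(c^{(s)}_{\aye})_{\jay}=(c^{(s)}_{\jay})_{\aye}=0$, so the relevant modules have no extra $S_{\aye}$- or $S_{\jay}$-composition factors), or alternatively by a direct combinatorial accounting that balances parallel and reverse paths contributed by earlier mutations.
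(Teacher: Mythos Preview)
Your setup is sound: the block form of $C^{(s)}$ is correct, and the Nakanishi--Zelevinsky identity indeed gives
\[
d_j\,B^{(s)}_{ji}=c_j^{\,t}DBc_i=\langle c_i,c_j\rangle-\langle c_j,c_i\rangle,
\]
with $c_i=e_i+\bar v$, $c_j=e_j+\bar u$. But the argument stops at the decisive point. You explicitly flag ``the remaining, and hardest, step is to establish $\Delta^{(s)}\ge0$'' and then offer only a sketch: an Euler-form interpretation or an unspecified combinatorial bookkeeping. Neither is carried out, and neither is routine. The antisymmetrized Euler form $\langle c_i,c_j\rangle-\langle c_j,c_i\rangle$ has no reason to dominate $\langle e_i,e_j\rangle-\langle e_j,e_i\rangle$ from the block structure and sign-coherence alone; you need to know something substantive about the modules $M_{c_i},M_{c_j}$ and their Hom/Ext relations, and that information is not encoded in the nonnegativity of $\bar u,\bar v$. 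Your own $s=1$ analysis shows why: acyclicity of $Q$ was essential there to rule out the ``bad'' sign pattern, but $Q^{(s-1)}$ is typically cyclic, so the one-step argument does not iterate, and you have not supplied a replacement invariant that survives the intermediate mutations.

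The paper's proof avoids this positivity question entirely by passing to the cluster category $\mathcal C_Q$. Because none of the mutations touch $i$ or $j$, the projectives $P_i,P_j$ remain summands of the cluster-tilting object $T'$, and the valuation $(d'_{ji},d'_{ij})$ is read off from the bimodule $\mathrm{Irr}_B(P_i,P_j)$ of irreducible maps in the cluster-tilted algebra $B=\mathrm{End}_{\mathcal C_Q}(T')^{\mathrm{op}}$. Comparing this with $\mathrm{Irr}_Q(P_i,P_j)$ gives the inequality directly, without ever expanding $B^{(s)}_{ji}$ as a sum of Euler pairings. If you pursue your representation-theoretic suggestion honestly---identifying $c_i,c_j$ with dimension vectors of the simple $B$-modules at $i,j$ and computing their extensions---you will find yourself reconstructing exactly this cluster-category comparison. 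So the gap is real: either supply a genuinely combinatorial proof of $\Delta^{(s)}\ge0$ (which would be new), or invoke the categorical input the paper uses.
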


\begin{proof}
Let $T=P_1\oplus\cdots\oplus P_n$ be the projective cluster in the cluster category $\cC_Q$ of $Q$, and let $T'$ be the cluster-tilting object given by $\mu_{\kay_s}\mu_{\kay_{s-1}}\cdots\mu_{\kay_1}T$ (\emph{cf.}, \cite{BMRRT}). The quiver $Q'$ is the Gabriel quiver of the cluster tilted algebra $B=\End_{\cC_Q}(T')\op$, and the valuation $(d'_{\jay\aye},d'_{\aye\jay})$ of $\alpha':\jay\to\aye$ in $Q'$ is given by $d'_{\jay\aye}=\dim_{F_{\aye}}\Irr_{B}(P_{\aye},P_{\jay})$ and $d'_{\aye\jay}=\dim_{F_{\jay}}\Irr_{B}(P_{\aye},P_{\jay})$ where $\Irr_B(P_{\aye},P_{\jay})$ denotes the space of irreducible $B$-linear maps from $P_{\aye}\to P_{\jay}$.

Since $\Irr_{B}(P_{\aye},P_{\jay})$ is the quotient of $\Irr_{\cC_Q}(P_{\aye},P_{\jay})$ by the ideal of morphisms $P_{\aye}\to P_{\jay}$ factoring through objects in the cluster $T'$ not equal to $P_{\jay}$ or $P_{\aye}$, the natural map $\Irr_{Q}(P_{\aye},P_{\jay})\to\Irr_{B}(P_{\aye},P_{\jay})$ is surjective. Hence $d'_{\jay\aye}d'_{\aye\jay}\ge d_{\jay\aye}d_{\aye\jay}$. In particular, if $\alpha:\jay\to\aye$ is infinite type in $Q$, $\alpha':\aye\to\jay$ is infinite type in $Q'$.
\end{proof}

\begin{cor}[Target before Source Conjecture]\label{cor:ibeforej}
If $Q$ is an acyclic valued quiver with an infinite type arrow $\alpha:\jay\to\aye$, any maximal green sequence mutates at the vertex $\aye$ before the vertex $\jay$.
\end{cor}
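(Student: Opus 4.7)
The plan is a short proof by contradiction that combines the Rotation Lemma with Corollary \ref{cor:general conjecture 1}, using Lemma \ref{lemmaB} to transport the infinite-type arrow through the intermediate mutations. Suppose for contradiction that some maximal green sequence $\mgs{\kay}=(\kay_0,\dots,\kay_{m-1})$ on $Q$ mutates at the source $\jay$ before mutating at the target $\aye$. Let $t$ be the smallest index with $\kay_t=\jay$. By minimality of $t$ and the standing assumption that $\aye$ does not occur before $\jay$, none of the entries $\kay_0,\dots,\kay_{t-1}$ equals $\aye$ or $\jay$.

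Set $Q'=\mu_{\kay_{t-1}}\cdots\mu_{\kay_0}Q$. I would next invoke Lemma \ref{lemmaB}: since the first $t$ mutations avoid the vertices $\aye$ and $\jay$ and the original quiver $Q$ is acyclic, the mutated quiver $Q'$ still contains an arrow $\alpha':\jay\to\aye$ of infinite type. This is the only step in the whole argument that uses the acyclicity hypothesis on $Q$, and Figure \ref{fig02} confirms that the statement genuinely fails without it, so this step cannot be bypassed.

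Finally, I would apply the Rotation Lemma $t$ times in succession to $\mgs{\kay}$. Each rotation turns a maximal green sequence on the current quiver into a maximal green sequence on its next mutation with the same associated permutation, so after $t$ rotations we obtain a maximal green sequence on $Q'$ whose first mutation is at the vertex $\kay_t=\jay$. But in $Q'$ the vertex $\jay$ is the source of the infinite-type arrow $\alpha':\jay\to\aye$, so this first mutation is at a source of an infinite-type arrow of $Q'$. That directly contradicts Corollary \ref{cor:general conjecture 1}, completing the proof.

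The main obstacle here is really conceptual rather than computational: essentially all of the substantive work has been done already in the Rotation Lemma, Lemma \ref{lemmaB}, and Corollary \ref{cor:general conjecture 1}, and the remaining task is just to assemble these ingredients in the right order. The one thing worth double-checking is that the rotation steps are really available, i.e.\ that each successive rotation is applied to an honest maximal green sequence rather than to some partial segment; this is immediate because the Rotation Lemma takes a maximal green sequence to a maximal green sequence, and can therefore be iterated.
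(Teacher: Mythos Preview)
Your proof is correct and follows essentially the same route as the paper's: assume for contradiction, rotate until the first mutation is at $\jay$, use Lemma~\ref{lemmaB} to keep the infinite-type arrow, and derive a contradiction. The only cosmetic difference is that you invoke Corollary~\ref{cor:general conjecture 1} at the end, while the paper invokes Lemma~\ref{lemmaA} together with Theorem~\ref{thm:ibeforej} directly; since Corollary~\ref{cor:general conjecture 1} is proved from exactly those two statements, the arguments are the same. One small remark: because Corollary~\ref{cor:general conjecture 1} already applies ``at each step'' of a maximal green sequence (it performs the rotation internally in its own proof), your explicit $t$-fold rotation is redundant---you could apply that corollary directly at step $t$ of the original sequence---but this is harmless.
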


\begin{proof}
Suppose that the first occurrence of $\jay$ precedes the first occurrence of $\aye$ in the maximal green sequence of $Q$. Rotate the sequence to form a maximal green sequence of a quiver $Q'$ having $\jay$ as the first mutation. By Lemma \ref{lemmaB} the quiver $Q'$ still has an infinite type arrow $\alpha':\jay\to\aye$.
The first mutation of the rotated sequence occurs at the $c$-vector $e_{\jay}$. By Lemma \ref{lemmaA}, the rotated sequence eventually mutates at the $c$-vector $e_{\aye}$. Since $Q'$ has an infinite type arrow $\alpha':\jay\to\aye$, this contradicts Theorem \ref{thm:ibeforej}, proving the corollary.
\end{proof}


\section{Finite number of reddening sequences}

In this section we prove the following theorem.

\begin{thm}\label{thm: tame quivers have finitely many r-red sequences}
If $Q$ is a quiver which is mutation equivalent to an acyclic tame quiver then $Q$ has at most finitely many $r$-reddening sequences for every $r\ge0$. In particular, $Q$ has at most finitely many maximal green sequences.
\end{thm}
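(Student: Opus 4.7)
The plan is to reduce this statement to Theorem \ref{finiteness} (finiteness of $r$-reddening sequences on acyclic tame quivers) via the Rotation Lemma. Let $Q_{\mathrm{ac}}$ denote an acyclic tame quiver mutation equivalent to $Q$, and fix a mutation sequence $(j_1,\ldots,j_r)$ so that $Q=\mu_{j_r}\cdots\mu_{j_1}Q_{\mathrm{ac}}$.

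Given any $r'$-reddening sequence $\mgs{\kay}=(\kay_0,\ldots,\kay_{m-1})$ on $Q$, I would form the extended mutation sequence on $Q$
\[
\tilde{\mgs{\kay}}=(j_r,\ldots,j_1,\,j_1,\ldots,j_r,\,\kay_0,\ldots,\kay_{m-1}).
\]
The first $r$ mutations move from $Q$ to $Q_{\mathrm{ac}}$ and the next $r$ retrace this path back to $Q$. Since mutation of matrices is an involution, by Lemma \ref{XX}(1) the $2r$-step ``loop'' returns the $c$-matrix to the identity, so $\tilde{\mgs{\kay}}$ continues from the initial state of $\mgs{\kay}$ and therefore ends at $-P_\sigma$, where $\sigma$ is the permutation associated to $\mgs{\kay}$. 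In particular $\tilde{\mgs{\kay}}$ is a reddening sequence on $Q$. Let $p$ be the number of red mutations occurring within the $2r$-step loop; since the loop depends only on the fixed sequence $(j_1,\ldots,j_r)$ and not on $\mgs{\kay}$, the integer $p$ is a constant (and in fact bounded by $2r$). Thus $\tilde{\mgs{\kay}}$ is a $(p+r')$-reddening sequence on $Q$.

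Next I would apply the Rotation Lemma (Theorem \ref{rotation}) exactly $r$ times to rotate the initial mutations $j_r,\ldots,j_1$ to the end. Each rotation preserves the number of red mutations and the associated permutation, so after $r$ rotations I obtain a $(p+r')$-reddening sequence on $\mu_{j_1}\cdots\mu_{j_r}Q=Q_{\mathrm{ac}}$ of the form
\[
(j_1,\ldots,j_r,\,\kay_0,\ldots,\kay_{m-1},\,\sigma^{-1}(j_r),\ldots,\sigma^{-1}(j_1)).
\]
By Theorem \ref{finiteness}, there are only finitely many reddening sequences on the acyclic tame quiver $Q_{\mathrm{ac}}$ with at most $p+r'$ red mutations. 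The assignment $\mgs{\kay}\mapsto\tilde{\mgs{\kay}}\mapsto(\text{rotated sequence})$ is injective: the first $r$ entries $(j_1,\ldots,j_r)$ are fixed, the permutation $\sigma$ is recovered from the final $c$-matrix $-P_\sigma$, the last $r$ entries are determined by $\sigma$ and the fixed $(j_1,\ldots,j_r)$, and the middle block recovers $\mgs{\kay}$. Hence $Q$ has only finitely many $r'$-reddening sequences; taking $r'=0$ gives the claim about maximal green sequences.

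The main technical obstacle is not in this reduction but in Theorem \ref{finiteness} itself, which strengthens the result of \cite{BDP} (who treated only the case of maximal green sequences on acyclic quivers, i.e.\ $r=0$) to allow an arbitrary number $r$ of red mutations. Once that strengthening is in hand, the Rotation Lemma does all the heavy lifting: it is the mechanism that transfers a bound on the total number of red mutations from the (non-acyclic) mutated quiver $Q$ back to the acyclic representative $Q_{\mathrm{ac}}$ where finiteness can be established directly.
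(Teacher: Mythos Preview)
Your reduction is correct and is precisely the argument the paper gives: extend the sequence by a back-and-forth loop to the acyclic quiver, then rotate $r$ times to land on $Q_{\mathrm{ac}}$ with a bounded number of red mutations (in fact exactly $r$ extra red mutations, by the involution property: each forward step and its reverse have opposite colors). One caveat on your referencing: in the paper, Theorem~\ref{finiteness} is literally the same statement as Theorem~\ref{thm: tame quivers have finitely many r-red sequences}, not just the acyclic case; the acyclic case you actually need is the content of Propositions~\ref{prop: only finitely many green seq of Class 1} and~\ref{second proposition}, so your appeal to ``Theorem~\ref{finiteness}'' as a black box is circular as written.
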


By the Rotation Lemma, it suffices to prove the theorem in the case when $Q$ is any acyclic tame (valued) quiver. The proof uses domains of semi-invariants and the easy observation that every cluster contains at least one preprojective or preinjective component. We begin with the basic definitions and an outline of the proof.

\subsection{Definitions and outline of proof}

Let $\Lambda$ be a fixed tame hereditary algebra which is finite dimensional over a field $K$. Let $P_1,\cdots,P_n$ be the indecomposable projective $\Lambda$-modules. The dimension vectors of indecomposable modules are called the \emph{positive roots} of $\Lambda$. Let $\pi_i=\undim P_i$ be the \emph{projective roots}. We also consider negative roots such as $-\pi_i=\undim P_i[1]$. The (positive) \emph{real Schur roots} of $\Lambda$ are the dimension vectors of indecomposable rigid modules (also called \emph{exceptional modules}). Since rigid modules are determined by their dimension vectors we write $M_\beta$ for the exceptional module with dimension vector $\beta$.

We recall that Auslander-Reiten translation $\tau$ is given on nonprojective roots by:
\[
	\tau \beta=-E^{-1}E^t\beta
\]
where $E$ is the Euler matrix of $\Lambda$. Note that $-\tau\pi_i$ is the dimension vector of the $i$-th injective module. The matrix $-E^{-1}E^t$ is an invertible integer matrix. Recall that the \emph{Euler-Ringel pairing} $\brk{\cdot,\cdot}:\ZZ^n\times \ZZ^n\to \ZZ$ is given by $\brk{x,y}=x^tEy$ and, for all modules $M,N$, we have
\[
	\brk{\undim M,\undim N}=\dim_K\Hom_\Lambda(M,N)-\dim_K\Ext^1_\Lambda(M,N).
\]
We have Auslander-Reiten duality: 
$
	\left<\alpha,\tau\beta\right>=-\left<\beta,\alpha\right>
$ and $\tau$ is an isometry:

$
	\left<\tau\alpha,\tau\beta\right>=\left<\alpha,\beta\right>.
$

Let $\cP_1$ be the set of projective roots and, for all $k\ge1$, let $\cP_k$ be the set of preprojective roots given by $\cP_k:=\cP_1\cup \tau^{-1}\cP_1\cup \cdots\cup \tau^{-(k-1)}\cP_1$.

Similarly, let $\cI_k:=\cI_1\cup \tau\cI_1\cup\cdots\cup \tau^{k-1}\cI_1$ where $\cI_1$ is the set of injective roots. Notice that $\cP_k,\cI_k$ are both finite with $kn$ elements.

For every $k\ge1$ let $\cW_k$ be subsets of $\RR^n$ defined by
\[
	\cW_k:=\{x\in \RR^n\,:\, \brk{x,\alpha}> 0\text{ for some }\alpha\in\cP_k\}.
\]
It is easy to see that the only roots in $\cW_k$ are those in $\cP_k$.
Note that the complement of $\cW_k$ is
\[
	\RR^n\backslash \cW_k=\{x\in \RR^n\,:\, \brk{x,\alpha}\le 0\text{ for all }\alpha\in\cP_k\}.
\]

For every $k\ge1$ let $\cV_k\subseteq\RR^n$ be defined by
\[
	\cV_k:=\{x\in \RR^n\,:\, \brk{x,\beta}\ge 0\text{ for all }\beta\in\cI_k\}.
\]
This set contains all roots except for those in $\cI_{k-1}$ since $\brk{x,\beta}<0$ for some $\beta\in\cI_k$ is equivalent to the statement $\brk{\gamma,x}>0$ for some $\gamma\in\cI_{k-1}$ (letting $\gamma=\tau^{-1}\beta$).

For each cluster tilting object $T=\bigoplus_{i=1}^n T_i$ in the cluster category of $\Lambda$ we have the simplicial cone 
\[
	R(T):=\left\{\sum_{i=1}^n a_i\undim T_i\,:\, a_i\ge0\right\} \subseteq\RR^n.
\]
We use properties of this set proved in \cite{IOTW2}.
Recall that, for distinct $T,T'$, the interiors of the regions $R(T),R(T')$ do not intersect. This follows from \cite{IOTW2}, Theorem 4.1.5. 

Let $Q$ be the valued quiver of $\Lambda$ given by the Euler matrix $E$. Let $B$ be the corresponding exchange matrix. We use the $c$-vector theorem from \cite{IOTW2} which implies the following.

\begin{thm}\label{c-vector theorem} Given any reddening sequence $(k_0,k_1,\cdots,k_{m-1})$ with corresponding exchange matrices $B=B_0,B_1,\cdots,B_m$ and $c$-matrices $I_n=C_0,C_1,\cdots,C_m=-P_\sigma$, there are unique cluster tilting objects $T^j=\bigoplus_{i=1}^n T_i^j$ for each $0\le j\le m$ so that 
\[
	V_j^tEC_j=-D
\]
where $V_j$ is the $n\times n$ matrix whose $i$-th column is $\undim T^j_i$ and $D$ is the diagonal matrix with diagonal entries $f_j=\dim_K\End_\Lambda(P_j)$, the valuation of $Q$ at vertex $j$. Also,
\begin{enumerate}
\item $T^0=\Lambda[1]$ and $T^m=\Lambda$. I.e., the first cluster tilting object in the reddening sequence is $\Lambda[1]$ and the last one is $\Lambda$.
\item $T^{j+1}=\mu_{k_j}T^j$ for every $j$. So, $T^j$ and $T^{j+1}$ differ only in their $k_j$-th component.
\item $R(T^j)\cap R(T^{j+1})$ is the subset of $H(\beta_k)=\{x\in\RR^n\,:\, \brk{x,\beta_k}=0\}$ spanned by $\undim T^j_i$ for $i\neq k_j$ where $\beta_k$ is the unique positive real Schur root so that the $k_j$-th column of $C_j$ is $\pm\beta_k$.
\item The mutation $\mu_{k_j}: B_j\mapsto B_{j+1}$ is green if and only if $R(T^j)$ is on the negative side of the hyperplane $H(\beta_k)$, i.e., $\brk{x,\beta_k}\le0$ for all $x\in R(T^j)$ and $\brk{y,\beta_k}\ge0$ for all $y\in R(T^{j+1})$.
\end{enumerate}
\end{thm}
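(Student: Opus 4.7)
The plan is to argue by induction on $j$, using the $c$-vector theorem of \cite{IOTW2} to match each $c$-matrix in the reddening sequence with a cluster tilting object. For the base case $j=0$, set $T^0=\Lambda[1]$, so that the $i$-th column of $V_0$ is $-\undim P_i$; since $\dim_K\Hom_\Lambda(P_i,S_k)=f_k\delta_{ik}$ and $\Ext^1_\Lambda(P_i,S_k)=0$, one has $\brk{\undim P_i,e_k}=f_k\delta_{ik}$, i.e.\ $\Pi^tE=D$ where $\Pi$ is the matrix with columns $\undim P_i$. Hence $V_0^tEC_0=-\Pi^tE\cdot I_n=-D$, which seeds the induction, and the regions $R(T^0),\ldots,R(T^m)$ are then built inductively by mutation.

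For the inductive step, assume $V_j^tEC_j=-D$ and let $c_{k_j}$ denote the $k_j$-th column of $C_j$. The $c$-vector theorem of \cite{IOTW2} says $c_{k_j}=\varepsilon\beta$ for some positive real Schur root $\beta$ and a sign $\varepsilon\in\{\pm1\}$, and it guarantees that the cluster mutation $T^{j+1}=\mu_{k_j}T^j$ exists, replacing $T^j_{k_j}$ by the unique indecomposable $T^{j+1}_{k_j}$ making $T^{j+1}$ again cluster tilting. To verify the equation at step $j{+}1$, I would check it column by column: for columns $i\ne k_j$ the matrices $V_{j+1}$ and $V_j$ agree (only the $k_j$-th summand of $T^j$ changes), and by Lemma \ref{mujC} the $i$-th column of $C_{j+1}$ is obtained from the $i$-th column of $C_j$ by adding a non-negative integer multiple of $c_{k_j}$, so the equation is preserved by the inductive hypothesis together with $\brk{\undim T^j_i,c_{k_j}}=0$. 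For the $k_j$-th column the content of the claim is that $\brk{\undim T^{j+1}_{k_j},\beta}=\varepsilon f_{k_j}$ and that $\undim T^{j+1}_{k_j}$ is orthogonal to $c_k$ for $k\ne k_j$; this follows from the cluster exchange triangle $T^j_{k_j}\to B\to T^{j+1}_{k_j}\to T^j_{k_j}[1]$ (or its opposite) together with Auslander--Reiten duality for the Euler--Ringel pairing.

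Properties (1)--(4) then come out of this framework without much additional effort: (1) is set at $j=0$ by definition and forced at $j=m$ because $C_m=-P_\sigma$ combined with Lemma \ref{lem: CD=DC} implies that $V_m^tE(-P_\sigma)=-D$ is solved precisely when the columns of $V_m$ are the projective dimension vectors permuted by $\sigma$, so $T^m=\Lambda$ up to relabeling; (2) is the definition of $T^{j+1}$ in the induction; (3) expresses the geometric fact that the common face of the simplicial cones $R(T^j)$ and $R(T^{j+1})$ lies in the hyperplane $H(\beta)$, which is a direct consequence of the equation $V_j^tEC_j=-D$ restricted to columns $i\ne k_j$; and (4) is read off the sign of $c_{k_j}$ from the $k_j$-th column of the same equation, since $\brk{\undim T^j_i,\beta}=\varepsilon\cdot(-f_{k_j})\delta_{i,k_j}$ is non-positive on $R(T^j)$ precisely when $\varepsilon=+1$. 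The main obstacle is the verification at the mutated column in the inductive step, since this is where the combinatorial matrix-mutation rule $X_{k_j}^\pm$ of Definition \ref{Xmatrix} must be matched against the module-theoretic cluster exchange triangle; but once the $c$-vector theorem of \cite{IOTW2} has been invoked to supply this correspondence, the remaining computation is a direct check.
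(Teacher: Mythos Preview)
The paper does not supply its own proof of this theorem: it is stated immediately after the sentence ``We use the $c$-vector theorem from \cite{IOTW2} which implies the following,'' and no argument follows. So there is nothing in the paper to compare your sketch against; the result is imported wholesale from \cite{IOTW2}.

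Your inductive outline is in the right spirit, but the bookkeeping in the inductive step is off, and the slip hides the actual content. You organise the verification of $V_{j+1}^tEC_{j+1}=-D$ ``column by column,'' and for columns $i\ne k_j$ you invoke that the $i$-th columns of $V_{j+1}$ and $V_j$ agree. But the $i$-th column of the product $V_{j+1}^tEC_{j+1}$ involves \emph{all} rows of $V_{j+1}^t$, hence all columns of $V_{j+1}$, including the mutated one. Concretely, the $(k_j,i)$ entry for $i\ne k_j$ reads
\[
\brk{\undim T^{j+1}_{k_j},\,c_i+m_i\,c_{k_j}}\quad\text{with }m_i=\max(\varepsilon b_{k_j i},0),
\]
and this must vanish. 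Your stated requirement that $\undim T^{j+1}_{k_j}$ be orthogonal to $c_i$ (the $i$-th column of $C_j$) for $i\ne k_j$ is incompatible with the other condition $\brk{\undim T^{j+1}_{k_j},c_{k_j}}=f_{k_j}$ whenever $m_i>0$; what one actually needs is $\brk{\undim T^{j+1}_{k_j},c_i}=-m_if_{k_j}$. Establishing that identity amounts to showing that the multiplicity of $T^j_i$ in the middle term of the exchange triangle equals $m_if_{k_j}/f_i$ (equivalently, matching the valued quiver of the cluster-tilted algebra $\End_{\cC_Q}(T^j)\op$ against the exchange matrix $B_j$), and this is precisely the substantive input from \cite{IOTW2} that you invoke by name but do not carry out.
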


\noindent In general, $R(\mu_kT)\cap R(T)$ is a subset of the ``semi-invariant domain'' (from \cite{IOTW2})
\[
	D(\beta_k)=\{x\in\RR^n\,:\, \brk{x,\beta_k}=0, \brk{x,\beta'}\le0\text{ for all subroots } \beta'\subset\beta_k\}
\]
of the real Schur root $\beta_k$ where by a \emph{subroot} of $\beta_k$ we mean the dimension vector of an indecomposable submodule of the exceptional module $M_{\beta_k}$ which is characterized by the property that $\Hom_\Lambda(T_i,M_{\beta_k})=0=\Ext_\Lambda^1(T_i,M_{\beta_k})$ for all $i\neq k$. Furthermore, the interiors of the regions $R(T)$ are disjoint from all $D(\beta)$. By the Virtual Stability Theorem \cite{IOTW2}, the condition $\brk{x,\beta'}\le0$ for all subroots $\beta'\subset\beta$ is equivalent to the condition that $\brk{x,\beta'}\le0$ for all real Schur subroots $\beta'\subseteq \beta$. This is also clearly equivalent to the condition that $\brk{x,\beta''}\ge0$ for all quotient roots $\beta''$ of $\beta$ and this is used in the next proof.

\begin{prop}\label{prop: R(T) does not cross dVk, dWk}
For every $k>0$ and every cluster tilting object $T$, the interior of $R(T)$ is either contained in $\cV_k$ or is disjoint from $\cV_k$. Similarly, the interior of $R(T)$ is either contained in $\cW_k$ or is disjoint from $\cW_k$ for every $k\ge1$.
\end{prop}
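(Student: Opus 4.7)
The plan is to reduce the dichotomy to the following key claim: for every preinjective real Schur root $\beta\in\cI_k$, the hyperplane $H(\beta)$ does not meet the interior of $R(T)$; and analogously for every preprojective $\alpha\in\cP_k$ in the $\cW_k$ case. Granted this claim, the proposition follows by a short connectedness argument. For each $\beta\in\cI_k$, the linear functional $\brk{\cdot,\beta}$ has a constant, nonzero sign on the open connected set $\mathrm{int}(R(T))$ (nonzero because $R(T)$ is full-dimensional, hence not contained in $H(\beta)$). If every $\beta\in\cI_k$ contributes a positive sign, then $\mathrm{int}(R(T))\subseteq\cV_k$; otherwise some $\beta\in\cI_k$ gives $\brk{x,\beta}<0$ at every interior point, so no interior point of $R(T)$ satisfies the defining inequalities of $\cV_k$, and $\mathrm{int}(R(T))\cap\cV_k=\emptyset$. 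The argument for $\cW_k$ is dual, using that $\RR^n\setminus\cW_k=\{x:\brk{x,\alpha}\le 0\text{ for all }\alpha\in\cP_k\}$ is a closed convex cone cut out by the hyperplanes $H(\alpha)$.

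To establish the key claim I would start from the recorded fact that $\mathrm{int}(R(T))$ is disjoint from every semi-invariant domain $D(\gamma)$ for $\gamma$ a real Schur root. Suppose toward a contradiction that $H(\beta)\cap\mathrm{int}(R(T))$ is nonempty for some preinjective $\beta\in\cI_k$. Since $D(\beta)\subseteq H(\beta)$ and $\mathrm{int}(R(T))\cap D(\beta)=\emptyset$, any such intersection point must lie in $H(\beta)\setminus D(\beta)$, and by the Virtual Stability Theorem this forces the existence of a real Schur subroot $\beta'\subsetneq\beta$ with $\brk{x,\beta'}>0$. I would then propagate the crossing: restrict attention to the codimension-one slice $H(\beta)\cap R(T)$ and iterate the argument with $\beta$ replaced by a smaller root, using Auslander--Reiten translation to remain inside the preinjective component. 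The descent should terminate by reaching either a genuine wall $D(\gamma)$ of $R(T)$ in $\mathrm{int}(R(T))$ (contradicting the disjointness) or a projective root, which is easily handled directly since $\brk{\pi_i,\beta}=\dim_K\Hom(P_i,M_\beta)\ge 0$ for any positive root $\beta$.

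The main obstacle is making this propagation step rigorous. One must ensure that the subroots produced at each step remain in a controlled family so the induction terminates, and that the crossings recorded correspond to actual walls $D(\gamma)$ of $R(T)$ in the sense of Theorem~\ref{c-vector theorem}. A clean way to organize the bookkeeping is via the Auslander--Reiten structure on the preinjective (resp.\ preprojective) component of the tame algebra $\Lambda$, using the isometry $\brk{\tau\alpha,\tau\beta}=\brk{\alpha,\beta}$ and Auslander--Reiten duality $\brk{\alpha,\tau\beta}=-\brk{\beta,\alpha}$ to compare signs of pairings involving $\tau^{-j}\beta$ with those involving $\beta$. For the $\cW_k$ claim the argument runs symmetrically, replacing preinjective real Schur roots with preprojective ones and $\cV_k$ with the complementary convex cone.
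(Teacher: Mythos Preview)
Your connectedness reduction is the right shape, but the key claim it rests on is false: it is \emph{not} true that $H(\beta)$ avoids $\mathrm{int}\,R(T)$ for every $\beta\in\cI_k$ and every cluster tilting object $T$. A concrete tame counterexample: take $Q$ of type $\tilde A_{2,1}$ with arrows $2\to1$, $3\to2$, $3\to1$. Then $\undim I_2=(0,1,1)$ and one computes $\brk{x,I_2}=x_2$, so $H(I_2)=\{x_2=0\}$. The object $T=P_1[1]\oplus S_2\oplus P_3[1]$ is a cluster tilting object (it is the support-tilting pair $(S_2,\,P_1\oplus P_3)$), and $R(T)$ is spanned by $(-1,0,0)$, $(0,1,0)$, $(-2,-1,-1)$; the interior point $(-3,0,-1)$ lies on $H(I_2)$. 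So $\brk{\cdot,I_2}$ genuinely changes sign on $\mathrm{int}\,R(T)$, and your propagation/descent argument cannot terminate in a contradiction because there is none to reach. Note that $D(I_2)=\{x_2=0,\,x_3\ge0\}$ does avoid this interior; it is only $H(I_2)\setminus D(I_2)$ that crosses. The proposition nonetheless holds for this $T$: since $\brk{x,I_1}=x_1<0$ throughout $\mathrm{int}\,R(T)$, the interior is disjoint from $\cV_1$ even though one of the defining functionals changes sign.

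What the paper does instead is weaker but sufficient: it shows directly that $\partial\cV_k\subseteq\bigcup_{\beta\in\cI_k}D(\beta)$. If $x\in\partial\cV_k$ then $\brk{x,\beta}=0$ for some $\beta\in\cI_k$ while $\brk{x,\gamma}\ge0$ for every $\gamma\in\cI_k$; the one-line observation is that every quotient root of a preinjective $M_\beta$ with $\beta\in\cI_k$ again lies in $\cI_k$, so the quotient-root inequalities characterising $D(\beta)$ are already among the defining inequalities of $\cV_k$. Hence $x\in D(\beta)$. Since $\mathrm{int}\,R(T)$ is disjoint from every $D(\beta)$, it is disjoint from $\partial\cV_k$, and then your connectedness argument applies verbatim. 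The $\cW_k$ case is symmetric, using that subroots of a preprojective $\alpha\in\cP_k$ lie in $\cP_k$. In short: you do not need each individual $\brk{\cdot,\beta}$ to have constant sign on $\mathrm{int}\,R(T)$; you only need that the locus where one of them vanishes \emph{and all the others remain nonnegative} avoids $\mathrm{int}\,R(T)$, and that locus is exactly a union of $D(\beta)$'s.
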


\begin{proof}
It suffices to show that the boundary of $\cV_k$ (its closure minus its interior) is a union of $D(\beta)$'s. So, let $x\in\partial\cV_k$. Then $\brk{x,\beta}=0$ for some $\beta\in\cI_k$. By definition of $\cV_k$, we have $\brk{x,\gamma}\ge0$ for all $\gamma\in\cI_k$. But this includes all quotient roots of $\beta$. Therefore, $x\in D(\beta)$ proving the claim. By an analogous argument applied to( $\RR^n\backslash \cW_k$ we see that $\partial\cW_k=\partial(\RR^n\backslash \cW_k)$ is also contained in a union of $D(\beta)$'s. The proposition follows.
\end{proof}

Since $\cV_k$ and $\cW_k$ lie on the positive side of $D(\beta)$ at each point on their boundaries and any mutation from the positive to the negative side of $D(\beta)$ is a red mutation by the $c$-vector threorem \ref{c-vector theorem}, we get the following.

\begin{cor}
Any mutation from a cluster tilting object $T$ inside $\cV_k$ or $\cW_k$ (i.e., so that the interior of $R(T)$ is inside the region) to one outside the region is red.\qed
\end{cor}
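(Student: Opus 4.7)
The plan is to combine item (4) of the $c$-vector theorem (Theorem \ref{c-vector theorem}) with the description of $\partial \cV_k$ and $\partial \cW_k$ implicit in the proof of Proposition \ref{prop: R(T) does not cross dVk, dWk}. Given the mutation $T \to T' = \mu_{k_j} T$, item (3) tells us that $R(T) \cap R(T')$ is contained in a single wall $D(\beta)$, where $\beta$ is the positive real Schur root appearing up to sign as the $k_j$-th column of $C_j$. By item (4), the mutation is green exactly when $R(T)$ lies on the non-positive side of $H(\beta)$. Hence it suffices to show that $R(T)$ lies on the non-negative side of $H(\beta)$, which forces the mutation to be red.

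Consider the case $R(T) \subseteq \cV_k$ with $R(T') \not\subseteq \cV_k$ (the interior of $R(T')$ being disjoint from $\cV_k$ by Proposition \ref{prop: R(T) does not cross dVk, dWk}). The cones $R(T), R(T')$ meet along the codimension-one wall $D(\beta)$, and since this wall separates the inside from the outside of $\cV_k$, it must lie in the boundary $\partial \cV_k$. The proof of Proposition \ref{prop: R(T) does not cross dVk, dWk} shows $\partial \cV_k \subseteq \bigcup_{\gamma \in \cI_k} D(\gamma)$, and since the relative interior of $D(\beta)$ is open in the hyperplane $H(\beta)$, matching codimensions forces $\beta \in \cI_k$. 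The defining inequality $\langle x, \beta\rangle \geq 0$ for all $x \in \cV_k$ then gives $\langle x, \beta\rangle \geq 0$ on $R(T)$, so by item (4) the mutation is red.

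The $\cW_k$ case is entirely analogous. One writes $\RR^n \setminus \cW_k = \{x : \langle x, \alpha\rangle \leq 0 \text{ for all } \alpha \in \cP_k\}$ and observes, as in the second half of the proof of Proposition \ref{prop: R(T) does not cross dVk, dWk}, that $\partial \cW_k \subseteq \bigcup_{\alpha \in \cP_k} D(\alpha)$; so the wall $D(\beta)$ separating $R(T)$ from $R(T')$ has $\beta \in \cP_k$. The complementary inequality shows $\langle x, \beta\rangle \geq 0$ on $R(T) \subseteq \cW_k$, which again makes the mutation red.

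The only subtlety I anticipate is justifying the step $\beta \in \cI_k$ (resp.\ $\cP_k$): a priori $D(\beta)$ could touch $\partial \cV_k$ only in a lower-dimensional locus rather than along a codimension-one face. This is ruled out by noting that a mutation-wall $D(\beta)$ has nonempty relative interior of codimension one in both $R(T)$ and $R(T')$, so wherever it sits inside $\partial \cV_k$ it must do so as a codimension-one piece of that boundary, which in turn forces $\beta$ itself to appear among the defining normal vectors of $\cV_k$. Once this identification is in place, the rest is simply reading off the sign of $\langle \cdot, \beta\rangle$ from the defining inequalities of $\cV_k$ or $\cW_k$ and applying item (4) of Theorem \ref{c-vector theorem}.
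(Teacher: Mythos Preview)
Your approach is essentially the paper's own: the entire argument there is the sentence preceding the corollary, namely that $\cV_k$ and $\cW_k$ lie on the positive side of each $D(\beta)$ making up their boundary, so exiting the region crosses from positive to negative, which is red by item~(4) of Theorem~\ref{c-vector theorem}. You have unpacked this, in particular making explicit (via the codimension argument) why the mutation root $\beta$ must itself belong to $\cI_k$ (resp.\ $\cP_k$), a point the paper leaves implicit.

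One small slip in the $\cW_k$ paragraph: the sentence ``the complementary inequality shows $\brk{x,\beta}\ge 0$ on $R(T)\subseteq\cW_k$'' does not follow as written. Membership in $\cW_k$ only says $\brk{x,\alpha}>0$ for \emph{some} $\alpha\in\cP_k$, not for the particular $\beta$ you have isolated. The fix is to apply the inequality on the other side of the wall: since the interior of $R(T')$ is disjoint from $\cW_k$, we have $R(T')\subseteq\{y:\brk{y,\alpha}\le 0\text{ for all }\alpha\in\cP_k\}$, hence $\brk{y,\beta}\le 0$ on $R(T')$. Thus $R(T')$ is on the negative side of $H(\beta)$, forcing $R(T)$ to the positive side, and item~(4) gives redness. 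With this adjustment your argument is complete and coincides with the paper's.
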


Since $\cV_k$ and $\cW_k$ contain all projective roots $\pi_i$ and none of the negative projective roots $-\pi_i$, it follows from Proposition \ref{prop: R(T) does not cross dVk, dWk} that the interior of $R(\Lambda)$ lies in $ \cV_k\cap \cW_k$ and the interior of $R(\Lambda[1])$ lies outside $\cV_k\cup\cW_k$ for all $k\ge1$. Thus every reddening sequence begins outside of both $\cV_k$ and $\cW_k$ and ends inside of both for all $k\ge1$. It is important to know which region the reddening sequence enters first, $\cV_k$ or $\cW_k$.

\begin{defn}
We say that a reddening sequence for $\Lambda$ \emph{meets} $\cV_k\backslash\cW_k$ if there is a cluster $T$ in the sequence so that $R(T)\subseteq\cV_k\backslash\cW_k$. If this is not the case, Proposition \ref{prop: R(T) does not cross dVk, dWk} implies that the interior of each $R(T)$ in the mutation sequence is disjoint from $\cV_k\backslash\cW_k$ and we say that the reddening sequence is \emph{disjoint} from $\cV_k\backslash\cW_k$.
\end{defn}

\begin{rem}\label{rem: properties of reddening sequences} Theorem \ref{thm: tame quivers have finitely many r-red sequences} follows from the following properties of reddening sequences.
\begin{enumerate}

\item (Finiteness) $\forall r,k$ only finitely many $r$-reddening sequences are disjoint from $\cV_k\backslash\cW_k$.
\item (Disjointness) $\forall r\ge0$ $\exists \kay_r$ so that every $r$-reddening sequence is disjoint from $\cV_{\kay_r}\backslash\cW_{\kay_r}$.
\end{enumerate}
These properties are proved in Propositions \ref{prop: only finitely many green seq of Class 1} and \ref{second proposition} below.
\end{rem}

\subsection{Finiteness}

In this subsection we will show that $R(T)\subseteq \cV_k\backslash\cW_k$ for all but finitely many clusters $T$. The first property in Remark \ref{rem: properties of reddening sequences} will follow.

\begin{lem}\label{lem: only finitely many roots outside V-W}
\emph{(a)} For each $k\ge1$ there are only finitely many real Schur roots $\gamma$ in the closure of the complement of $\cV_k\backslash\cW_k$.

\emph{(b)} $\cV_k\backslash\cW_k$ contains $R(T)$ for all but finitely many cluster tilting objects $T$.
\end{lem}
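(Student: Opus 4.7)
The plan is to prove part (a) by partitioning the real Schur roots of the tame hereditary algebra $\Lambda$ into preprojective, regular, and preinjective classes, and controlling each class separately; part (b) then follows from (a) using the cone structure of $R(T)$ together with Proposition \ref{prop: R(T) does not cross dVk, dWk}.

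For (a), observe that the closure of the complement of $\cV_k\setminus\cW_k$ is $\overline{\cW_k}\cup\overline{\RR^n\setminus\cV_k}$, so a real Schur root $\gamma$ lies in it if and only if $\brk{\gamma,\alpha}\ge 0$ for some $\alpha\in\cP_k$ or $\brk{\gamma,\beta}\le 0$ for some $\beta\in\cI_k$. Since $\Lambda$ is tame hereditary, every indecomposable $\Lambda$-module is preprojective, regular (lying in a tube), or preinjective, and there are only finitely many regular rigid modules because rigidity in a tube of rank $r$ forces quasi-length less than $r$, and only finitely many tubes are non-homogeneous. For a preprojective real Schur root $\gamma=\tau^{-a}\pi_i$ with $a$ large and any $\alpha=\tau^{-b}\pi_{i'}\in\cP_k$ (so $b<k<a$), we have $\Hom(M_\gamma,M_\alpha)=0$ and Auslander-Reiten duality gives $\brk{\gamma,\alpha}=-\dim_K\Hom(M_\alpha,\tau M_\gamma)=-\dim_K\Hom(P_{i'},\tau^{-(a-b-1)}P_i)$, which tends to $-\infty$ linearly in $a$ because the Coxeter transformation acts on the preprojective component by adding the strictly positive null root $\delta$ every Coxeter period. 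Hence $\brk{\gamma,\alpha}<0$ uniformly over the finitely many $\alpha\in\cP_k$ once $a$ is sufficiently large, so $\gamma\notin\overline{\cW_k}$. Likewise, for $\beta\in\cI_k$, we have $\Ext^1(M_\gamma,M_\beta)=0$ (preprojective to preinjective) while $\dim_K\Hom(M_\gamma,M_\beta)$ grows linearly in $a$, so $\brk{\gamma,\beta}>0$ and $\gamma\notin\overline{\RR^n\setminus\cV_k}$. The preinjective case is entirely symmetric.

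For (b), by Proposition \ref{prop: R(T) does not cross dVk, dWk} the interior of each $R(T)$ is either contained in $\cV_k\setminus\cW_k$ or disjoint from it. In the latter case, the interior lies in the open set $(\RR^n\setminus\cV_k)\cup\cW_k$, and since $R(T)$ is the closure of its interior, every extremal ray $\undim T_i$ lies in $\overline{\cW_k}\cup\overline{\RR^n\setminus\cV_k}$. Each indecomposable summand $T_i$ of a cluster-tilting object in $\cC_\Lambda$ is either an indecomposable $\Lambda$-module (with dimension vector a real Schur root) or a shift $P_j[1]$ (with dimension vector $-\pi_j$, so only $n$ choices, all of which satisfy $\brk{-\pi_j,\beta}\le 0$ automatically). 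By (a) only finitely many real Schur roots are available, so each $T_i$ has finitely many possibilities, and hence so does $T=T_1\oplus\cdots\oplus T_n$.

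The main obstacle is cleanly invoking the linear growth of $\Hom$ and $\Ext^1$ dimensions under repeated $\tau$-translation in the tame case; this is standard (the Coxeter element has eigenvalues of absolute value $1$, producing linear rather than exponential growth, and the null root is sincere), but one must ensure the bound is uniform over the finitely many $\alpha\in\cP_k$ and $\beta\in\cI_k$ to conclude with a single threshold for $a$.
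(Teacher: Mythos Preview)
Your proof is correct and follows the same overall strategy as the paper: partition the real Schur roots into preprojective, regular, and preinjective, dispose of the regular ones by the standard finiteness in tame type, and show that all sufficiently far preprojective (resp.\ preinjective) roots lie in the interior of $\cV_k\setminus\cW_k$. Part (b) is handled identically, and your explicit mention of the shifted projectives $P_j[1]$ is a nice touch the paper glosses over.

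The only real difference is in how the preprojective case is argued. The paper simply notes that once $\gamma\notin\cP_{k+1}$ one has $\Hom(M_\gamma,M_\alpha)=0$ for every $\alpha\in\cP_{k+1}$ and $\Ext^1(M_\gamma,M_\beta)=0$ for every preinjective $\beta$, which immediately gives the required sign conditions and the explicit bound $\cP_{k+1}\cup\cI_{k+1}$ on the exceptional set. You instead invoke the linear growth of $\dim_K\Hom(P_{i'},\tau^{-c}P_i)$ under $\tau$-iteration (via the sincere null root) to force the pairings to $\pm\infty$. This is heavier machinery than needed and yields only an inexplicit threshold, but it has the advantage of making the \emph{strict} inequalities $\brk{\gamma,\alpha}<0$ and $\brk{\gamma,\beta}>0$ for \emph{all} $\alpha\in\cP_k$, $\beta\in\cI_k$ completely transparent---which is exactly what ``in the interior'' requires. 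The paper's write-up, by contrast, records $\le 0$ for all and $<0$ only for \emph{some} $\alpha$, so your version is arguably cleaner on this point.
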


\begin{proof}
(a) If $\gamma$ is any preprojective root which is not in $\cP_{k+1}$ then $\Hom(M_\gamma,M_\alpha)=0$, so $\brk{\gamma,\alpha}\le0$, for any $\alpha\in\cP_{k+1}$ and $\Ext(M_\gamma,M_\alpha)\neq0$, and thus $\brk{\gamma,\alpha}<0$, for some $\alpha\in\cP_k$. Also, $\Ext(M_\gamma,M_\beta)=0$, so $\brk{\gamma,\beta}\ge0$, for any preinjective $\beta$ and $\Hom(M_\gamma,M_\beta)\neq0$, so $\brk{\gamma,\beta}>0$, for some preinjective $\beta$. Thus, $\gamma$ lies in the interior of $\cV_k\backslash\cW_k$.

Similarly, any preinjective $\gamma$ not in $\cI_k$ lies in the interior of $\cV_k\backslash \cW_k$. So, any real Schur root disjoint from the interior of $\cV_k\backslash\cW_k$ either lies in the finite set $\cP_{k+1}\cup \cI_{k+1}$ or is regular. Since there are only finitely many regular roots in the tame case, statement (a) follows.

(b) The dimension vector of every component $T_i$ of every cluster tilting object $T$ is a real Schur root. And $R(T)$ is spanned by the vectors $\undim T_i$. It follows from Proposition \ref{prop: R(T) does not cross dVk, dWk} that, if $R(T)$ is not contained in $\cV_k\backslash\cW_k$ then the interior of $R(T)$ is disjoint from $\cV_k\backslash\cW_k$. This implies that each $\undim T_i$ is a real Schur root in the closure of the complement of $\cV_k\backslash\cW_k$. By (a) there are only finitely many such roots. So, there are only finitely many $T$ outside of $\cV_k\backslash\cW_k$ and the remaining ones are inside $\cV_k\backslash\cW_k$.
\end{proof}

In the following lemma we say that two $c$-matrices are \emph{equivalent} if they differ by permutation of their columns, i.e., if they give the same set of $c$-vectors.

\begin{lem}\label{lemmaC} An $r$-reddening sequence passes through the same cluster at most $r+1$ times. In other words, no more than $r+1$ $c$-matrices in the sequence can be equivalent.
\end{lem}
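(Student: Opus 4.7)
My plan is to translate the claim about $c$-matrices into a geometric statement about the regions $R(T) \subseteq \RR^n$. By the uniqueness clause of Theorem \ref{c-vector theorem}, two equivalent $c$-matrices yield the same cluster tilting object $T$ up to reordering of summands, and therefore give rise to the same region $R(T)$. It thus suffices to bound, for a fixed cluster $T$, the number of positions $j$ at which $T^j = T$ along the reddening sequence.

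The key input is part (4) of Theorem \ref{c-vector theorem}: each mutation in the sequence crosses exactly one wall $H(\beta)$, with a green mutation moving $R(T^j)$ from the negative side of $H(\beta)$ to the positive side, and a red mutation moving in the reverse direction. Now suppose $T$ appears at positions $i_1 < i_2 < \dots < i_k$. For each real Schur root $\beta$, the region returns from $R(T)$ to itself between times $i_s$ and $i_{s+1}$, so the intermediate mutations must cross $H(\beta)$ the same number of times in each direction. Summing this wall-by-wall balance, the sub-sequence of mutations from time $i_s$ to $i_{s+1}$ contains exactly as many green mutations as red mutations.

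Since consecutive cluster tilting objects are distinct by part (2) of Theorem \ref{c-vector theorem}, each of these $k-1$ ``loops'' has length at least $2$, and balanced green/red counts force at least one red mutation in each. The loops occupy disjoint time intervals within the sequence, so together they contribute at least $k-1$ distinct red mutations, giving $k-1 \leq r$ and hence $k \leq r+1$. I expect the only real subtlety to lie in the wall-crossing bookkeeping; once one observes that, because $R(T^{i_s}) = R(T^{i_{s+1}}) = R(T)$, the signed count of crossings of any hyperplane $H(\beta)$ over the corresponding loop must vanish, the remainder of the argument is purely combinatorial.
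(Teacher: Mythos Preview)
Your overall strategy---bounding the number of visits to a fixed cluster by exhibiting a red mutation in each of the $k-1$ ``loops'' between consecutive visits---is sound and would yield the lemma. The gap is in your justification that each loop contains at least one red mutation.

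You argue that for every real Schur root $\beta$ the mutations in a loop cross $H(\beta)$ equally often in each direction, and conclude by summing that the loop has exactly as many green as red mutations. But part (4) of Theorem~\ref{c-vector theorem} only guarantees that $R(T^j)$ lies on a definite side of $H(\beta_{k_j})$ for the \emph{specific} root $\beta_{k_j}$ labelling the mutation at step~$j$; for an unrelated $\beta$ the wall $D(\beta)$ is a proper subset of the hyperplane $H(\beta)$, and the remainder of $H(\beta)$ may pass through the interior of some $R(T^j)$. A mutation path can therefore change sides of $H(\beta)$ without the $c$-vector ever being $\pm\beta$, so the per-$\beta$ balance fails. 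Concretely, for the $A_2$ quiver $2\to 1$ there are maximal green sequences of lengths $2$ and $3$; following the first and then the reverse of the second gives a closed walk at $\Lambda[1]$ with $2$ green and $3$ red mutations, and for $\beta=e_1+e_2$ the count is $0$ green versus $1$ red. (The region $R(P_1\oplus P_2[1])$, spanned by $e_1$ and $-(e_1+e_2)$, genuinely straddles $H(e_1+e_2)$.) In effect your argument would prove that the oriented exchange graph is the Hasse diagram of a \emph{graded} poset, which it is not.

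The weaker statement you actually need---that no nontrivial closed walk in the oriented exchange graph is entirely green---does hold, since that graph is the Hasse diagram of a poset (cf.\ \cite{BY}) and hence has no directed cycles; with that input your counting goes through. The paper avoids invoking this external fact and argues differently: it applies the Rotation Lemma (Theorem~\ref{rotation}) to move the first occurrence of the repeated cluster to the initial position, so that its $c$-matrix becomes $I_n$. By the Mutation Formula (Theorem~\ref{formula}) the remaining $q-1$ occurrences then become positive permutation $c$-matrices in the rotated sequence, and the mutation immediately preceding a positive $c$-matrix is necessarily red, giving $q-1\le r$.
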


\begin{proof} Suppose there is an $r$-reddening sequence $(k_0,\cdots,k_{m-1})$ which reaches the same $c$-matrix $C_s$ or an equivalent matrix say $q>r+1$ times. Apply the Rotation Lemma to make the first of these the first mutation so that $C_s$ is replaced with the identity matrix.

In the Mutation Formula \ref{formula}, if $C_{t}=C_sP_\rho$ then $C_{t}'=C_s'P_\rho$. Therefore, the $q-1$ $c$-matrices equivalent to $C_s$ in the original reddening sequence become $q-1$ permutation matrices in the rotated sequence. Since these have positive entries, the $q-1$ mutations preceding these must all be red. But $q-1>r$ giving a contradiction.
\end{proof}

These two lemmas imply the following.

\begin{prop}\label{prop: only finitely many green seq of Class 1}
For every $r,k$ there are at most finitely many $r$-reddening sequences disjoint from $\cV_k\backslash\cW_k$.\qed
\end{prop}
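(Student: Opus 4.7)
The plan is to combine the two preceding lemmas in a straightforward way: Lemma~\ref{lem: only finitely many roots outside V-W}(b) bounds the number of distinct clusters that can appear in a sequence disjoint from $\cV_k\setminus\cW_k$, while Lemma~\ref{lemmaC} bounds how many times each cluster can be revisited in an $r$-reddening sequence. Together these bound the length of such a sequence, and then a crude counting argument over a finite alphabet finishes the job.

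More precisely, first I would observe that if $(\kay_0,\dots,\kay_{m-1})$ is any $r$-reddening sequence disjoint from $\cV_k\setminus\cW_k$, then by definition each of the intermediate cluster tilting objects $T^0,T^1,\dots,T^m$ has the property that the interior of $R(T^j)$ is disjoint from $\cV_k\setminus\cW_k$. By Proposition~\ref{prop: R(T) does not cross dVk, dWk} this means $R(T^j)$ is not contained in $\cV_k\setminus\cW_k$, so by Lemma~\ref{lem: only finitely many roots outside V-W}(b) each $T^j$ belongs to a fixed finite set $\mathcal{T}_k$ of cluster tilting objects depending only on $k$. Next, Lemma~\ref{lemmaC} says no single cluster (equivalently, no single $c$-matrix up to column permutation) is hit more than $r+1$ times along an $r$-reddening sequence. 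Hence
\[
m+1 \;\le\; (r+1)\,|\mathcal{T}_k|,
\]
giving a uniform upper bound on the length of any $r$-reddening sequence disjoint from $\cV_k\setminus\cW_k$.

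Finally, since each mutation index $\kay_j$ is chosen from the $n$ vertices of $Q$, the number of such sequences is at most $n^{(r+1)|\mathcal{T}_k|}$, which is finite. This completes the argument.

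I do not expect any real obstacle: both Lemma~\ref{lem: only finitely many roots outside V-W}(b) and Lemma~\ref{lemmaC} have already done the substantive work (the former using finiteness of regular components in the tame case, and the latter using the Rotation Lemma together with the Mutation Formula). The only thing to be slightly careful about is the translation between the ``disjoint'' hypothesis (phrased in terms of interiors of $R(T^j)$) and the containment $T^j \in \mathcal{T}_k$, which is immediate from Proposition~\ref{prop: R(T) does not cross dVk, dWk}.
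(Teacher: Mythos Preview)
Your argument is correct and is exactly the approach the paper intends: the proposition is stated with a \qed and the sentence ``These two lemmas imply the following,'' and you have simply written out the implication. The only minor quibble is that the appeal to Proposition~\ref{prop: R(T) does not cross dVk, dWk} is unnecessary---the definition of ``disjoint'' already says directly that no $R(T^j)$ is contained in $\cV_k\setminus\cW_k$, which is all you need to invoke Lemma~\ref{lem: only finitely many roots outside V-W}(b).
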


\subsection{Disjunction} We will show that all $r$-reddening sequences are disjoint from $\cV_k\backslash \cW_k$ for sufficiently large $k$. We use the fact that, in the tame case, there is a unique null root $\eta$ and $\tau\eta=\eta$. 
We also use the following formula from \cite{DR}.

\begin{thm}\label{thm: defect}
For any (connected) tame hereditary algebra there is a positive integer $m$ and, for every positive root $\alpha$ of $\Lambda$, there is an integer $\delta(\alpha)$ called the \emph{defect} of $\alpha$ so that
\[
	\tau^m\alpha= \alpha+\delta(\alpha)\eta.
\]
Furthermore, $\delta(\alpha)$ is positive, negative or zero depending on whether $\alpha$ is preinjective, preprojective or regular, respectively.
\end{thm}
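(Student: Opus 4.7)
The plan is to exploit the linear algebra of the Coxeter transformation $\Phi = -E^{-1}E^{t}$, which is the integer automorphism of $\ZZ^{n}$ inducing $\tau$ on non-projective dimension vectors. First I would invoke the classical spectral result of Dlab--Ringel in the connected tame hereditary case: the characteristic polynomial of $\Phi$ factors over $\CC$ as $(x-1)^{2}\cdot p(x)$, where $p(x)$ is a product of cyclotomic polynomials whose roots are roots of unity different from $1$; moreover the generalized $1$-eigenspace is two-dimensional, the $1$-eigenspace is spanned by $\eta$, and on the generalized eigenspace $\Phi$ acts by a single Jordan block $\left[\begin{smallmatrix}1&1\\0&1\end{smallmatrix}\right]$ in a basis $(\eta,v)$ with $\Phi v = v+\eta$. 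The rest is then essentially formal.

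Next, I would take $m$ to be the least common multiple of the orders of all eigenvalues of $\Phi$ different from $1$. On the complementary $\Phi$-invariant subspace $\Phi^{m}$ acts as the identity, while on the generalized $1$-eigenspace one computes $\Phi^{m}\eta=\eta$ and $\Phi^{m}v = v+m\eta$. Hence the image of $\Phi^{m}-I$ is contained in $\QQ\eta$, so for every $\alpha\in\ZZ^{n}$ the vector $\Phi^{m}\alpha-\alpha$ is a rational multiple of $\eta$. Since $\Phi$ is an integer matrix and $\eta$ is a primitive integer vector (its entries have gcd $1$), this multiple is automatically an integer, which I call $\delta(\alpha)$. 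The resulting identity $\tau^{m}\alpha = \Phi^{m}\alpha = \alpha+\delta(\alpha)\eta$ is exactly what the theorem asserts, and linearity of $\delta$ is immediate from linearity of $\Phi^{m}-I$.

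For the sign statement I would use the $\tau$-orbit structure of the module category. If $\alpha$ is preprojective, write $\alpha=\undim\tau^{-k}P_{i}$ for some $k\ge 0$; then for every $j\ge 0$, $\tau^{-jm}\alpha=\alpha-j\delta(\alpha)\eta$ is still the dimension vector of a preprojective module, and these dimension vectors grow without bound as $j\to\infty$ (the preprojective component is an infinite chain under $\tau^{-1}$ with unbounded dimensions in the tame case). Since $\eta$ has strictly positive entries, growth forces $-\delta(\alpha)>0$, i.e.\ $\delta(\alpha)<0$. The symmetric argument applied to $\tau^{jm}\alpha$ for a preinjective $\alpha=\undim\tau^{k}I_{i}$ yields $\delta(\alpha)>0$. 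For a regular root $\alpha$, $M_{\alpha}$ lies in a tube of finite rank (or equals $\eta$, which satisfies $\tau\eta=\eta$), so $\tau^{jm}\alpha$ is eventually periodic and hence bounded; this forces $\delta(\alpha)=0$.

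The main obstacle is just the spectral decomposition of $\Phi$ with a $2\times 2$ Jordan block at eigenvalue $1$ and semisimple action on the complement. Since this is the classical Dlab--Ringel statement already being invoked through the theorem's citation, I would treat it as a black box rather than reprove it. The remaining ingredients are routine linear algebra together with the standard structural facts about $\tau$-orbits in the preprojective, regular, and preinjective components of a tame hereditary algebra.
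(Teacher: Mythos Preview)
The paper does not prove this theorem at all; it is quoted directly from Dlab--Ringel \cite{DR} and used as a black box. So there is no ``paper's own proof'' to compare against.

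Your argument is correct and is essentially the standard linear-algebraic route to this fact: reduce to the Jordan structure of the Coxeter transformation $\Phi=-E^{-1}E^{t}$ in the affine case (single $2\times 2$ block at eigenvalue $1$ with $\eta$ spanning the eigenline, diagonalizable with root-of-unity eigenvalues on the complement), then read off $\Phi^{m}-I$ as a rank-one map into $\ZZ\eta$. The integrality step via primitivity of $\eta$ is fine, and the sign analysis using unbounded growth of $\tau^{-jm}\alpha$ (preprojective), $\tau^{jm}\alpha$ (preinjective), and periodicity on tubes (regular) is the right way to finish. The one place to be careful is the assertion that $\Phi$ acts semisimply on the complement of the generalized $1$-eigenspace; this is true for affine Coxeter elements but is itself part of the Dlab--Ringel package you are invoking, so be explicit that you are citing it rather than deriving it.
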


Let $H(\eta)$ be the hyperplane in $\RR^n$ given by
\[
	H(\eta)=\{x\in\RR^n\,:\, \brk{x,\eta}=0\}, \text{ and let}
\]

\[
D(\eta):=\{x\in H(\eta)\,:\,\brk{x,\alpha}\le0\text{ for all preprojective roots }\alpha\}.
\]

We define the \emph{positive}, resp. \emph{negative}, side of $H(\eta)$ to be the set of all $x\in\RR^n$ so that $\brk{x,\eta}\ge0$, resp. $\le0$. All preprojective roots lie on the positive side of $H(\eta)$, preinjective roots and negative projective roots on the negative side and all regular roots lie on $H(\eta)$. Since every reddening sequence starts on the negative side of $H(\eta)$ and ends on its positive side, it must cross $H(\eta)$ at some point.

\begin{lem}
$\tau^{-1} D(\eta)=D(\eta)$.
\end{lem}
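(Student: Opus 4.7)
The plan is to verify $\tau^{-1}D(\eta)=D(\eta)$ by checking both inclusions directly, using three ingredients already recorded in the paper: the fixed-vector identity $\tau\eta=\eta$, the isometry property $\brk{\tau a,\tau b}=\brk{a,b}$, and the description of how $\tau^{\pm 1}$ acts on preprojective roots. The hyperplane constraint $\brk{x,\eta}=0$ is preserved under $\tau^{\pm 1}$ automatically by isometry together with $\tau\eta=\eta$, so the real work is to preserve the inequalities $\brk{x,\alpha}\le 0$ indexed by preprojective roots $\alpha$.

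For the containment $D(\eta)\subseteq \tau^{-1}D(\eta)$, equivalently $\tau D(\eta)\subseteq D(\eta)$: given $x\in D(\eta)$ and a preprojective root $\alpha$, isometry converts $\brk{\tau x,\alpha}$ into $\brk{x,\tau^{-1}\alpha}$. Since $\tau^{-1}$ sends a preprojective root $\tau^{-k}\pi_i$ to the preprojective root $\tau^{-(k+1)}\pi_i$, the vector $\tau^{-1}\alpha$ is again preprojective, and $\brk{x,\tau^{-1}\alpha}\le 0$ is immediate from $x\in D(\eta)$. This direction is painless.

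The reverse containment $\tau^{-1}D(\eta)\subseteq D(\eta)$ is where I expect the main obstacle. For $x\in D(\eta)$ and a preprojective root $\alpha$, isometry gives $\brk{\tau^{-1}x,\alpha}=\brk{x,\tau\alpha}$. When $\alpha=\tau^{-k}\pi_i$ with $k\ge 1$, $\tau\alpha=\tau^{-(k-1)}\pi_i$ is again preprojective and the inequality is free. The trouble is the case $\alpha=\pi_i$ projective, where $\tau\pi_i=-\iota_i$, so the desired inequality reduces to $\brk{x,\iota_i}\ge 0$, which is not part of the definition of $D(\eta)$.

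To resolve this I would invoke Theorem \ref{thm: defect}: the defect formula gives $\tau^m\pi_i=\pi_i+\delta(\pi_i)\eta$ with $\delta(\pi_i)<0$, and applying $\tau^{-(m-1)}$ (which fixes $\eta$) rewrites this as $\tau\pi_i=\tau^{-(m-1)}\pi_i+\delta(\pi_i)\eta$. Hence
\[
\iota_i=-\tau\pi_i=-\tau^{-(m-1)}\pi_i-\delta(\pi_i)\eta,
\]
expressing $\iota_i$ as the negative of a preprojective root plus a scalar multiple of $\eta$. Pairing with $x\in D(\eta)$ and using $\brk{x,\eta}=0$ together with $\brk{x,\tau^{-(m-1)}\pi_i}\le 0$ yields $\brk{x,\iota_i}=-\brk{x,\tau^{-(m-1)}\pi_i}\ge 0$, which closes the final case and finishes the proof.
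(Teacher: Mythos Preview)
Your proof is correct and follows essentially the same approach as the paper's. Both directions use isometry to trade action on $x$ for action on the preprojective index, and for the problematic projective case both use the defect formula (Theorem~\ref{thm: defect}) to rewrite the troublesome root as a genuinely preprojective root modulo $\eta$; the paper writes $\alpha=\tau^{-m}\alpha+\delta(\tau^{-m}\alpha)\eta$ while you write the equivalent $\tau\pi_i=\tau^{-(m-1)}\pi_i+\delta(\pi_i)\eta$, which is just the same identity shifted by one power of $\tau$.
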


\begin{proof} $\tau^{-1} D(\eta)$ is the set of all $x\in H(\eta)$ so that $\left<\tau x,\alpha\right>\le0$ for all preprojective $\alpha$. Since $\left<\tau x,\alpha\right>=\left< x,\tau^{-1}\alpha\right>$, this condition is equivalent to the condition that $\left< x,\alpha\right>\le 0$ for $\alpha$ preprojective but not projective. So, $D(\eta)\subseteq \tau^{-1}D(\eta)$. But, for projective $\alpha$ and $x\in \tau^{-1}D(\eta)$, $\tau^{-m}\alpha$ is preprojective and $x\in H(\eta)$. So,
$
	\left< x,\alpha\right>=\left< x,\tau^{-m}\alpha-\delta(\tau^{-m}\alpha)\eta\right>=\left< x,\tau^{-m}\alpha\right>\le0$.
Therefore, $x\in D(\eta)$. 
\end{proof}

\begin{prop}\label{prop433}
Let $x\in H(\eta)$ and let $k\ge m$ where $m$ is as in Theorem \ref{thm: defect}. Then the following are equivalent.
\begin{enumerate}
\item $x\in D(\eta)$.
\item $\brk{x,\alpha}\le 0$ for all $\alpha\in\cP_k$.
\item $\brk{x,\beta}\ge0$ for all $\beta\in \cI_k$.
\item[($3'$)] $\brk{\tau^k x,\beta}\ge0$ for all $\beta\in \cI_k$.
\end{enumerate}
\end{prop}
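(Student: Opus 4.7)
The plan is to reduce all four conditions to a single statement about the doubly-infinite sequence $f_i(t) := \langle x,\tau^t\pi_i\rangle$ for $t\in\mathbb{Z}$ and $1\le i\le n$. The engine of the proof will be the observation that, under the hypothesis $x\in H(\eta)$, this sequence is periodic with period $m$. Indeed, Theorem~\ref{thm: defect} says that $\tau^m\alpha-\alpha\in\mathbb{Z}\eta$ for every positive root $\alpha$, and by linearity of $\tau$ the same is true for $-\alpha$. For every $t\in\mathbb{Z}$ the vector $\tau^t\pi_i$ is either a preprojective root (when $t\le0$) or the negative of a preinjective root (when $t\ge 1$, using $\tau\pi_i=-\iota_i$), hence $\tau^{t+m}\pi_i-\tau^t\pi_i\in\mathbb{R}\eta$ and pairing with $x\in H(\eta)$ gives $f_i(t+m)=f_i(t)$.

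Next I would translate each of the four conditions into an assertion about the $f_i$. Condition (1) asserts $f_i(-j)\le0$ for all $j\ge 0$. Condition (2) asserts $f_i(-j)\le0$ for $0\le j\le k-1$. For condition (3), I use $\tau^s\iota_i=-\tau^{s+1}\pi_i$ to rewrite $\langle x,\tau^s\iota_i\rangle\ge 0$ as $f_i(t)\le0$ for $1\le t\le k$. For condition $(3')$, I use the isometry of $\tau$ to compute
\[
\langle \tau^k x,\tau^s\iota_i\rangle=\langle x,\tau^{s-k}\iota_i\rangle=-\langle x,\tau^{s-k+1}\pi_i\rangle=-f_i(s-k+1),
\]
so the condition becomes $f_i(t)\le 0$ for $1-k\le t\le 0$.

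Finally, since $k\ge m$, each of the index ranges $\{-(k-1),\dots,0\}$, $\{1,\dots,k\}$, and $\{1-k,\dots,0\}$ consists of $k$ consecutive integers and therefore contains a complete residue system modulo $m$. By the periodicity from Step 1, the condition ``$f_i(t)\le0$'' on any one of these ranges is equivalent to the same condition on every integer $t$, which is condition (1). Hence all four conditions are equivalent.

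The one place where a little care is needed is extending the defect formula to all powers $\tau^t\pi_i$: for $t\ge 1$ the vector $\tau^t\pi_i$ is not a positive root but the negative of a preinjective root, so the periodicity statement $f_i(t+m)=f_i(t)$ must be obtained by linearity rather than by a direct appeal to Theorem~\ref{thm: defect}. Once that is observed, the rest is bookkeeping: translate each statement into the uniform language of the $f_i$ and invoke periodicity together with $k\ge m$.
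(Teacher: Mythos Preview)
Your proof is correct and uses the same two ingredients as the paper---the defect formula $\tau^m\alpha=\alpha+\delta(\alpha)\eta$ and the isometry of $\tau$---but organizes them more uniformly. The paper proves a cycle of implications: $(1)\Rightarrow(2)$ is trivial, $(2)\Leftrightarrow(3')$ comes from $-\tau^k(\mathcal{P}_k)=\mathcal{I}_k$, $(2)\Rightarrow(1)$ is the defect-formula argument, and $(3')\Leftrightarrow(3)$ is deduced from the separately proved lemma $\tau^{-1}D(\eta)=D(\eta)$. Your approach absorbs that lemma: once you know the sequences $f_i(t)=\langle x,\tau^t\pi_i\rangle$ are $m$-periodic, the $\tau$-invariance of $D(\eta)$ is immediate, and all four conditions become the single statement ``$f_i\le0$ on a window of $\ge m$ consecutive integers.'' This makes the role of the hypothesis $k\ge m$ transparent and dispenses with the auxiliary lemma, at the cost of a little bookkeeping in writing each condition as a range for $t$. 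Note in particular that your translations of $(2)$ and $(3')$ land on the \emph{same} index range $\{-(k-1),\dots,0\}$, which is exactly the paper's observation $-\tau^k(\mathcal{P}_k)=\mathcal{I}_k$ in your language.
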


\begin{proof} $(1) \then (2)$ by definition.\\
$(2)\ifff (3')$ since $-\tau^k(\cP_k)=\cI_k$.\\
$(3')\ifff (3)$ since $x\in D(\eta)$ iff $\tau^k x\in D(\eta)$.\\
$(2)\then(1)$ since, for any preprojective root $\alpha$ not in $\cP_k$, there is a positive integer $t$ so that $\tau^{tm}\alpha\in\cP_m\subseteq \cP_k$. Since $x\in H(\eta)$, $
	\brk{x,\alpha}=\brk{x,\tau^{tm}\alpha-t\delta(\alpha)\eta}=\brk{x,\tau^{tm}\alpha}\le 0
$
by (2). 
\end{proof}

\begin{cor}\label{cor: V cap H=D}
If $k\ge m$ then
\begin{enumerate}
\item[(a)] $\cV_k\cap H(\eta)=D(\eta)$.
\item[(b)] $\cW_k\cap H(\eta)=H(\eta)\backslash D(\eta)$.
\end{enumerate}
\end{cor}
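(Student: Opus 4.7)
The plan is to derive both equalities directly from Proposition \ref{prop433}, since the sets $\cV_k\cap H(\eta)$ and $\cW_k\cap H(\eta)$ are literally the hyperplane-intersections cut out by the inequality systems appearing as conditions (3) and (2) of that proposition.

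For part (a), I would simply unfold definitions: $x\in\cV_k\cap H(\eta)$ means $x\in H(\eta)$ together with $\brk{x,\beta}\ge 0$ for every $\beta\in\cI_k$. Since $k\ge m$, Proposition \ref{prop433} (condition (3) $\Leftrightarrow$ (1)) tells us this is equivalent to $x\in D(\eta)$. So $\cV_k\cap H(\eta)=D(\eta)$.

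For part (b), the set $\cW_k$ is defined by a \emph{strict} inequality ($\brk{x,\alpha}>0$ for some $\alpha\in\cP_k$), so its complement in $H(\eta)$ consists of those $x\in H(\eta)$ with $\brk{x,\alpha}\le 0$ for \emph{all} $\alpha\in\cP_k$. This is precisely condition (2) of Proposition \ref{prop433}, hence equivalent (under $k\ge m$) to condition (1), i.e.\ $x\in D(\eta)$. Thus $H(\eta)\setminus\cW_k=D(\eta)$, which rearranges to $\cW_k\cap H(\eta)=H(\eta)\setminus D(\eta)$.

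I do not anticipate any real obstacle: the whole content has been packaged into Proposition \ref{prop433}, and this corollary is essentially a bookkeeping step that translates between the ``preprojective side'' description (used to define $\cW_k$ and $D(\eta)$) and the ``preinjective side'' description (used to define $\cV_k$). The only point worth flagging is the switch between strict and non-strict inequality in passing from $\cW_k$ to its complement; once that is noted, both (a) and (b) are immediate.
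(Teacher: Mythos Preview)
Your proposal is correct and is essentially the paper's own argument: both parts are read off directly from Proposition~\ref{prop433}. In fact your write-up is slightly more careful than the paper's one-line proof, which appears to have the equivalences swapped (the paper cites $(1)\Leftrightarrow(2)$ for (a) and $(1)\Leftrightarrow(3)$ for (b), whereas, as you observe, $\cV_k$ is defined by the $\cI_k$-inequalities of condition~(3) and the complement of $\cW_k$ by the $\cP_k$-inequalities of condition~(2)).
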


\begin{proof} (a) follows from the equivalence $(1)\ifff(2)$ in Proposition \ref{prop433} and (b) follows from the equivalence $(1)\ifff(3)$ in the Proposition.\end{proof}

\begin{lem}\label{lem1}
For every preprojective or preinjective root $\gamma$, there is a $k$ so that $\gamma\notin\cV_k\backslash\cW_k$.
\end{lem}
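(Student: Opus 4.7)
The plan is to treat preprojective and preinjective $\gamma$ separately, in each case giving an explicit witness. Both arguments reduce to the fact that $\brk{\delta,\delta} > 0$ for any real Schur root $\delta$ (since this equals $\dim_K \End_\Lambda(M_\delta)$) together with the $\tau$-isometry and Auslander--Reiten duality for the Euler--Ringel pairing.

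For the preprojective case, suppose $\gamma = \tau^{-j}\pi_i$ for some $j\ge 0$. Then $\gamma \in \cP_{j+1} \subseteq \cP_k$ for every $k \ge j+1$. Since $\brk{\gamma,\gamma} = \dim_K\End_\Lambda(M_\gamma) > 0$, we may take $\alpha = \gamma$ in the definition of $\cW_k$ to conclude that $\gamma \in \cW_k$, and hence $\gamma \notin \cV_k\setminus \cW_k$.

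For the preinjective case, suppose $\gamma = \tau^j \iota_i$ for some $j\ge 0$, and set $k = j+2$ and $\beta = \tau^{j+1}\iota_i \in \cI_{j+2}$. Using the $\tau$-isometry and then Auslander--Reiten duality in the form $\brk{\alpha,\tau\beta} = -\brk{\beta,\alpha}$,
\[
	\brk{\gamma,\beta} \;=\; \brk{\tau^j\iota_i,\,\tau^{j+1}\iota_i} \;=\; \brk{\iota_i,\tau\iota_i} \;=\; -\brk{\iota_i,\iota_i} \;<\; 0,
\]
since $\iota_i$ is a real Schur root. Thus $\gamma \notin \cV_k$, which again puts $\gamma$ outside $\cV_k\setminus\cW_k$.

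There is no real obstacle here; the one thing to get right is choosing the witness $\beta$ on the correct side in the preinjective case so that the AR--duality sign makes $\brk{\gamma,\beta}$ strictly negative rather than strictly positive. Note that this argument makes essential use of the fact that $\gamma$ itself lies in a preprojective or preinjective component, and so has no direct analogue for regular roots; this is consistent with Lemma \ref{lem: only finitely many roots outside V-W}(a), where the regular roots are dealt with separately via the finiteness of regular real Schur roots in the tame case.
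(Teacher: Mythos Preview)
Your proof is correct and follows essentially the same approach as the paper's: in both cases the witness is $\alpha=\gamma$ (preprojective) and $\beta=\tau\gamma$ (preinjective), and the key inequality $\brk{\gamma,\tau\gamma}<0$ comes from Auslander--Reiten duality applied to $\brk{\gamma,\gamma}>0$. The only difference is cosmetic: you spell out the isometry step and the indexing of $\gamma$ explicitly, whereas the paper just asserts $\brk{\gamma,\tau\gamma}<0$ directly.
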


\begin{proof}
Any preprojective $\gamma$ lies in $\cP_p$ for some $p$. Then $\brk{\gamma,\gamma}>0$ and $\gamma\in\cW_p$. So $\gamma\notin \cV_k\backslash \cW_k$ for all $k\ge p$. Similarly, any preinjective $\gamma$ lies in $\cI_q$ for some $q$. Then $\tau\gamma\in\cI_{q+1}$ and $\brk{\gamma,\tau\gamma}<0$. So, $\gamma\notin\cV_k\backslash\cW_k$ for all $k>q$.
\end{proof}

\begin{lem}\label{lem2}
Every cluster tilting object in the cluster category of $mod\text-\Lambda$ has at least one preprojective or preinjective summand.
\end{lem}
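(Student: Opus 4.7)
The plan is to argue by contradiction using the defect linear functional from Theorem~\ref{thm: defect}. Suppose a cluster tilting object $T = T_1 \oplus \cdots \oplus T_n$ in $\cC_Q$ has no preprojective and no preinjective summand. Each indecomposable of $\cC_Q$ is either an indecomposable $\Lambda$-module or a shifted projective $P_j[1]$; the latter sits in the same component of the cluster Auslander--Reiten quiver as the preprojective modules, so by hypothesis no such summand appears either. Consequently every $T_i$ must be a regular $\Lambda$-module.

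The key tool is the defect $\delta \colon \ZZ^n \to \ZZ$, which by Theorem~\ref{thm: defect} vanishes precisely on the regular positive roots and takes a negative (hence nonzero) value on each projective root $\pi_i$. In particular $\delta$ extends to a nontrivial $\RR$-linear functional on $\RR^n$, and its kernel is a hyperplane of real dimension $n-1$. Under the contradiction hypothesis the $n$ vectors $\undim T_1, \ldots, \undim T_n$ are all regular positive roots, and hence all lie in this $(n-1)$-dimensional hyperplane.

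I next invoke linear independence of these dimension vectors. Choose any reddening sequence passing through $T$ (for instance, mutate $\Lambda[1]$ to $T$ and then continue on to $\Lambda$; such a sequence exists because cluster mutation acts transitively and every reddening sequence terminates at $\Lambda$). Theorem~\ref{c-vector theorem} then provides the identity $V^tEC = -D$, where $V$ is the matrix whose columns are $\undim T_1, \ldots, \undim T_n$. Since $E$, $C$, and the diagonal matrix $D$ are all invertible, so is $V$, and its columns therefore form a basis of $\RR^n$.

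But $n$ linearly independent vectors cannot be contained in a hyperplane of dimension $n-1$, a contradiction. Hence some $T_i$ is preprojective or preinjective, proving the lemma. The only mildly delicate point is the bookkeeping: one must agree that shifted projectives $P_j[1]$ count as preprojective summands in the cluster-category sense (and so are ruled out by the hypothesis); the substantive content of the argument is the one-line dimension count playing the rank of $V$ against the rank of $\ker \delta$.
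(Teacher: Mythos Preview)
Your argument is correct and follows essentially the same route as the paper: both proofs observe that the dimension vectors of the summands of a cluster tilting object are linearly independent, while regular roots all lie in a single hyperplane (the paper uses $H(\eta)=\{x:\brk{x,\eta}=0\}$, you use $\ker\delta$; these coincide). The paper simply asserts linear independence as known, whereas you extract it from the identity $V^tEC=-D$ of Theorem~\ref{c-vector theorem}, which is a valid but somewhat heavier justification; your handling of shifted projectives is also more explicit than the paper bothers to be.
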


\begin{proof}
The dimension vectors of the summands of any cluster tilting object are linearly independent. But regular roots all lie in the hyperplane $H(\eta)$. So, the summands of a cluster tilting object cannot all be regular.
\end{proof}

\begin{lem}\label{lem3}
Every reddening sequence is disjoint from $\cV_k\backslash \cW_k$ for sufficiently large $k$.
\end{lem}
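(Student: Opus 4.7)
My proposal is to deduce the lemma from Lemmas \ref{lem1} and \ref{lem2} together with Proposition \ref{prop: R(T) does not cross dVk, dWk}, using only the finiteness of a reddening sequence.

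Fix a reddening sequence with associated cluster tilting objects $T^0,T^1,\dots,T^m$ from Theorem \ref{c-vector theorem}. First, I would invoke Lemma \ref{lem2} to choose, for each $j$, an indecomposable summand $T^j_{i_j}$ of $T^j$ whose dimension vector $\gamma_j$ is preprojective or preinjective. Lemma \ref{lem1} (and its proof) then supplies an integer $N_j$ so that for every $k\ge N_j$, either $\gamma_j\in\cW_k$ witnessed by the strict inequality $\brk{\gamma_j,\gamma_j}>0$ (preprojective case, $\gamma_j\in\cP_k$) or $\gamma_j\notin\cV_k$ witnessed by the strict inequality $\brk{\gamma_j,\tau\gamma_j}<0$ (preinjective case, $\tau\gamma_j\in\cI_k$). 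Both strict inequalities persist on an open neighbourhood of $\gamma_j$. Set $k:=\max_{0\le j\le m} N_j$, which is finite because a reddening sequence has only finitely many terms.

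Next I would show that this $k$ realises disjointness of the reddening sequence from $\cV_k\setminus\cW_k$. Since $\gamma_j\in R(T^j)$ lies on an extremal ray of this simplicial cone and a whole neighbourhood of $\gamma_j$ in $\RR^n$ lies in $\cW_k$ or in $\RR^n\setminus\cV_k$, the interior of $R(T^j)$ meets $\cW_k$ or the open complement of $\cV_k$ respectively (take $\gamma_j+\epsilon\sum_i\undim T^j_i$ for small $\epsilon>0$). Proposition \ref{prop: R(T) does not cross dVk, dWk} then forces the interior of $R(T^j)$ to be entirely contained in $\cW_k$ or entirely disjoint from $\cV_k$; either conclusion makes it disjoint from $\cV_k\setminus\cW_k$. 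Since this holds for every $j$, the reddening sequence is disjoint from $\cV_k\setminus\cW_k$.

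I do not foresee a serious obstacle; the argument is essentially bookkeeping around the two lemmas and the ``all or nothing'' proposition. The one point that merits checking is the monotonicity in $k$: once $\gamma_j$ enters $\cW_k$ or leaves $\cV_k$ at $k=N_j$, the inclusions $\cP_{N_j}\subseteq\cP_k$ and $\cI_{N_j}\subseteq\cI_k$ for $k\ge N_j$ ensure the same witnessing inequality persists, so taking the maximum $N_j$ is legitimate.
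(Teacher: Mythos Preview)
Your proof is correct and follows the same strategy as the paper: invoke Lemma~\ref{lem2} to find a preprojective or preinjective summand in each cluster, apply Lemma~\ref{lem1} to push each outside $\cV_k\backslash\cW_k$, and take the maximum over the finitely many clusters in the sequence. The paper's argument is essentially identical but finishes more quickly: since ``disjoint'' merely means that no $R(T^j)$ is \emph{contained} in $\cV_k\backslash\cW_k$, it suffices to observe that $\gamma_j\in R(T^j)$ while $\gamma_j\notin\cV_k\backslash\cW_k$, so $R(T^j)\not\subseteq\cV_k\backslash\cW_k$; your detour through strict inequalities, open neighbourhoods, and Proposition~\ref{prop: R(T) does not cross dVk, dWk} to control the \emph{interior} is not needed here (though it proves a slightly stronger statement).
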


\begin{proof}
A reddening sequence consists of a finite sequence of cluster tilting objects each having at least one preprojective or preinjective summand. By Lemma \ref{lem1}, there is a $k$ so that none of these roots lies in $\cV_k\backslash\cW_k$. Then the reddening sequence stays in the complement of $\cV_k\backslash\cW_k$.
\end{proof}

\begin{prop}\label{second proposition}
If a reddening sequence meets $\cV_{rm}\backslash \cW_{rm}$ then it has at least $r$ red mutations. So, every $r$-reddening sequence is disjoint from $\cV_{(r+1)m}\backslash \cW_{(r+1)m}$.
\end{prop}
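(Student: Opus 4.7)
I plan to prove the first claim by induction on $r$, from which the second follows immediately: an $r$-reddening sequence meeting $\cV_{(r+1)m}\setminus\cW_{(r+1)m}$ would, by the first claim, force at least $r+1$ red mutations, contradicting the definition of an $r$-reddening sequence. The base case $r=0$ is trivial.

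For the inductive step, let $\sigma=(\kay_0,\ldots,\kay_{N-1})$ be a reddening sequence that meets $\cV_{rm}\setminus\cW_{rm}$ at some cluster $T^j$. Since $\cV_{rm}\setminus\cW_{rm}\subseteq\cV_{(r-1)m}\setminus\cW_{(r-1)m}$, the inductive hypothesis supplies at least $r-1$ red mutations; my task is to exhibit a further red mutation not counted among those. The key structural input is Lemma \ref{lem: only finitely many roots outside V-W}: since $R(T^j)\subseteq\cV_{rm}\setminus\cW_{rm}$, every preprojective (respectively preinjective) summand of $T^j$ has $\tau$-level at least $rm+1$, and by Lemma \ref{lem2} at least one such summand exists. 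The plan is to identify an earlier cluster $T^{j_0}$ in $\sigma$ with $R(T^{j_0})\subseteq \cV_{(r-1)m}\setminus\cW_{(r-1)m}$ whose deepest preprojective/preinjective summand has level in the range $[(r-1)m+1,\,rm]$, and then to show that the transition from $T^{j_0}$ to $T^j$ must include a mutation exiting either $\cV_{rm}$ or $\cW_{rm}$ in the red direction—an argument powered by the corollary preceding Proposition \ref{prop: R(T) does not cross dVk, dWk} together with Theorem \ref{c-vector theorem}(4). The extra red mutation can be localized strictly after the $r-1$ reds supplied by induction by choosing the inductive sequence to count only reds that occur before $T^{j_0}$.

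The main obstacle is ensuring the extra red mutation is genuinely forced, not optional. Naively, one could imagine entering $\cV_{rm}\setminus\cW_{rm}$ from the outside via a purely green path (enter $\cV_{rm}$ from $\cV_{rm}^c$ while still outside $\cW_{rm}$, then later enter $\cW_{rm}$ from $\cW_{rm}^c$), which would not generate any red mutation. The subtle point ruling this out is that reaching a cluster whose \emph{entire} cone $R(T^j)$ lies in $\cV_{rm}\setminus\cW_{rm}$ requires, by Lemma \ref{lem: only finitely many roots outside V-W}, every preprojective/preinjective summand to have high $\tau$-level; combined with the pinching of $\cV_k\setminus\cW_k$ toward $D(\eta)$ as $k$ grows (which by Corollary \ref{cor: V cap H=D} is exact on the null hyperplane for $k\ge m$), this geometrically forces the cluster cones to oscillate against some hyperplane $H(\beta)$ with $\beta\in\cI_{rm}\setminus\cI_{(r-1)m}$ or $\beta\in\cP_{rm}\setminus\cP_{(r-1)m}$. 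Making this forcing precise, likely by iterating the $\tau^m$-shift together with the defect formula of Theorem \ref{thm: defect} to convert the existence of a deep summand at $T^j$ into a mandatory crossing in the wrong direction, will be the technical crux of the proof.
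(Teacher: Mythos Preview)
Your outline has a genuine gap precisely where you locate it: you never identify a mechanism that \emph{forces} the extra red mutation. The ingredients you list at the end (defect formula, $\tau^m$-shift, pinching toward $D(\eta)$) are the right raw materials, but the way you propose to use them---tracking the $\tau$-level of a preprojective/preinjective summand and hoping this forces an ``oscillation'' against some wall---does not work as stated. Summand levels can increase under purely green mutations, so there is no direct obstruction to passing from a cluster with a summand of level $\le rm$ to one of level $>rm$ without a red step. Your plan also quietly strengthens the inductive hypothesis: you want the $r-1$ reds to occur \emph{before} a chosen cluster $T^{j_0}$, but the statement you are inducting on only says the whole sequence has $r-1$ reds. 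Without reformulating the induction, you cannot localize them.

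The paper's argument supplies exactly the forcing device you are missing. First observe (via Lemma~\ref{lem3}) that the chosen cluster $T^1$ with $R(T^1)\subseteq\cV_{rm}\setminus\cW_{rm}$ must lie entirely on one side of $H(\eta)$, since it is disjoint from $\cV_k\setminus\cW_k\supseteq D(\eta)$ for $k$ large. This gives a case split. If $R(T^1)$ is on the \emph{negative} side of $H(\eta)$, then the tail of the sequence after $T^1$ must eventually cross to the positive side (it ends at $\Lambda$), and by Corollary~\ref{cor: V cap H=D} it cannot cross through $D(\eta)$; hence it must first exit $\cV_{rm}$ through a boundary wall $D(\beta)$ with $\beta\in\cI_{rm}$, which is a red step. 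The defect formula then shows $\beta\notin\cI_{(r-1)m}$, so the cluster $T^3$ just after this exit still lies in $\cV_{(r-1)m}\setminus\cW_{(r-1)m}$, on the negative side of $H(\eta)$---and one inducts on the \emph{tail after} $T^3$. The positive-side case is symmetric, counting reds in the portion \emph{before} $T^1$ instead. Thus the induction is on the sharper claim ``if $R(T)\subseteq\cV_{rm}\setminus\cW_{rm}$ on the negative (resp.\ positive) side, then the part of the sequence after (resp.\ before) $T$ has at least $r$ reds,'' which makes the localization you were reaching for automatic.
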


\begin{proof} We divide the proof into two cases and prove each case by induction on $r\ge1$, the case $r=0$ being vacuously true.

Suppose that a reddening sequence meets $\cV_{rm}\backslash \cW_{rm}$ for some $r\ge1$. Then the reddening sequence includes a cluster tilting object $T^1$ so that $R(T^1)\subseteq \cV_{rm}\backslash\cW_{rm}$. By Lemma \ref{lem3}, $R(T^1)$ is disjoint from $\cV_k\backslash\cW_k$ for $k$ sufficiently large. Since $\cV_k\backslash \cW_k$ contains $D(\eta)=H(\eta)\cap(\cV_{rm}\backslash \cW_{rm})$, the region $R(T^1)$ lies on one side of the hyperplane $H(\eta)$. There are two cases. Either $R(T^1)$ lies on the negative side of $H(\eta)$ or it lies on its positive side.

\underline{Case 1}: $R(T^1)$ lies on the negative side of $H(\eta)$. 

In this case we will show, by induction on $r$, that the remainder of the reddening sequence has at least $r$ red mutations. 

Since $R(T^1)$ is on the negative side of $H(\eta)$, the remainder of the reddening sequence must somehow arrive at the positive side of $H(\eta)$. By Lemma \ref{lem3} the sequence is disjoint from some $\cV_k\backslash \cW_k$ which contains $D(\eta)$ by Corollary \ref{cor: V cap H=D}. So, the reddening sequence must pass through $H(\eta)\backslash D(\eta)$ which is in $\cW_{rm}\backslash \cV_{rm}$. To get from $\cV_{rm}\backslash \cW_{rm}$ to $\cW_{rm}\backslash \cV_{rm}$, the reddening sequence must pass through one of the red walls $D(\beta)$ of $\cV_{rm}$ on the negative side of $H(\eta)$. Let $T^2,T^3$ be the two cluster tilting objects in the reddening sequence with $R(T^2)\subseteq \cV_{rm}$, $R(T^3)\not\subseteq \cV_{rm}$ and $R(T^2)\cap R(T^3)\subseteq R(\beta)\cap \partial \cV_{rm}$.

Let $x\in D(\beta)\cap \partial \cV_{rm}$ be a point in the interior of the wall separating $R(T^2)$ and $R(T^3)$. Then $\brk{x,\beta}=0$ and $\beta\in\cI_{rm}$. We claim that $\beta$ does not lie in $\cI_{(r-1)m}$. Otherwise, $\tau^m\beta =\beta+\delta(\beta)\eta$ would lie in $\cI_{rm}$ and we would arrive at the contradiction
\[
	0\le\brk{x,\tau^m\beta}=\brk{x,\beta}+\delta(\beta)\brk{x,\eta}<0
\]
using the fact that $\delta(\beta)>0$ for preinjective $\beta$ and $\brk{x,\eta}<0$ in Case 1. 

But $x\in\cV_{rm}\backslash\cW_{rm}\subseteq \cV_{(r-1)m}\backslash\cW_{(r-1)m}$. Since $\beta\notin\cI_{(r-1)m}$, $x$ does not lie on $\partial \cV_{(r-1)m}$. So, $x$ lies in the interior of $\cV_{(r-1)m}$. This implies that $R(T^3)$ also lies in the interior of $\cV_{(r-1)m}\backslash \cW_{(r-1)m}$ and on the negative side of $D(\eta)$. By induction on $r$, the rest of the redding sequence has at least $r-1$ red mutations. Since the mutation from $T^2$ to $T^3$ is red, the portion of the reddening sequence after $T^1$ has at least $r$ red mutations. This proves the proposition in Case 1.

\underline{Case 2}: $R(T^1)$ lies on the positive side of $H(\eta)$.

In this case we claim that the part of the reddening sequence before $T^1$ has at least $r$ red mutations. By an argument analogous to Case 1, there is a $T^0$ with $R(T^0)$ in $\cV_{(r-1)m}\backslash \cW_{(r-1)m}$ in the reddening sequence. We need at least one red mutation to get from $T^0$ to $T^1$ and, by induction on $r$, we need $r-1$ red mutations to get to $T^0$. This gives at least $r$ red mutations in Case 2, just as in Case 1.

So, every reddening sequence which meets $\cV_{rm}\backslash\cW_{rm}$ has at least $r$ red mutations.
\end{proof}

\begin{center}
\begin{figure}[h]

\includegraphics{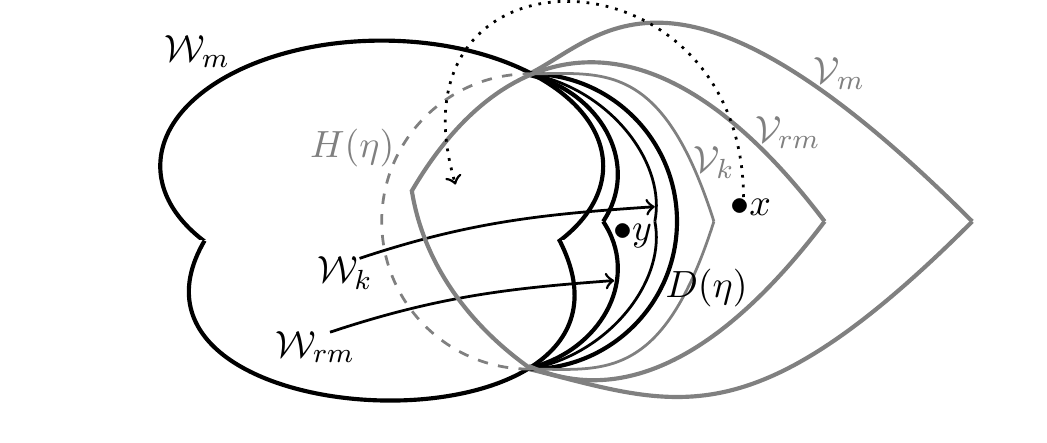}

\caption{(Proof of Proposition \ref{second proposition}) Case 1: Since reddening sequences cannot cross $D(\eta)$, we need to cross $r$ red walls (gray in figure) to escape from any interior point $x\in \cV_{rm}$ on the negative side of $D(\eta)$ as shown in the dotted path. Case 2: We need to cross $r$ red walls (black in figure) to reach any interior point $y\in \cV_{rm}\backslash\cW_{rm}$ on the positive side of $D(\eta)$.}\label{fig: need r red}\label{fig06}
\end{figure}
\end{center}

\begin{thm}\label{finiteness}
Let $Q$ a valued quiver which is mutation equivalent to an acyclic valued quiver of tame representation type. Then, for any $r\ge0$, $Q$ admits only finitely many $r$-reddening sequences. In particular, $Q$ has only finitely many maximal green sequences.
\end{thm}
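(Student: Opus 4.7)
The plan is straightforward given the machinery developed in this section. First, by the Rotation Lemma applied iteratively, we reduce to the case that $Q$ is itself acyclic tame. Concretely, fix a mutation sequence $(j_1,\ldots,j_R)$ with $Q = \mu_{j_R}\cdots\mu_{j_1}Q_0$ for some acyclic tame valued quiver $Q_0$. Following the construction in the introduction, any $r$-reddening sequence $(k_0,\ldots,k_{m-1})$ on $Q$ extends to the reddening sequence $(j_R,\ldots,j_1,j_1,\ldots,j_R,k_0,\ldots,k_{m-1})$ on $Q$, which applying the Rotation Lemma (Theorem \ref{rotation}) $R$ times converts into a reddening sequence
\[
(j_1,\ldots,j_R,k_0,\ldots,k_{m-1},\sigma^{-1}(j_R),\ldots,\sigma^{-1}(j_1))
\]
on $Q_0$ with the same total number of red mutations, which is bounded by $R+r$. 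The assignment is injective since rotation is invertible and the first $R$ entries on $Q_0$ are prescribed. Hence it suffices to show that an acyclic tame valued quiver $Q_0$ admits only finitely many $N$-reddening sequences for every $N\ge 0$.

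Assuming now that $Q$ is itself acyclic tame, we combine the two properties recorded in Remark \ref{rem: properties of reddening sequences}. By the Disjointness input (Proposition \ref{second proposition}), every $r$-reddening sequence on $Q$ is disjoint from $\cV_{(r+1)m}\setminus\cW_{(r+1)m}$, where $m$ is the defect period from Theorem \ref{thm: defect}. By the Finiteness input (Proposition \ref{prop: only finitely many green seq of Class 1}), for each choice of $r$ and $k$, only finitely many $r$-reddening sequences can be disjoint from $\cV_k\setminus\cW_k$. Applying Finiteness with $k=(r+1)m$ immediately bounds the number of $r$-reddening sequences on $Q$ to be finite. The ``in particular'' statement about maximal green sequences is the special case $r=0$.

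The main obstacle has already been discharged in Propositions \ref{prop: only finitely many green seq of Class 1} and \ref{second proposition}; the present theorem is essentially a packaging statement. The geometrically subtle step is the Disjointness proposition, which uses the defect formula $\tau^m\alpha=\alpha+\delta(\alpha)\eta$ together with the observation that $D(\eta)\subseteq\cV_k\cap H(\eta)$ for large $k$, in an induction on $r$ showing that entering the ``core'' region $\cV_{rm}\setminus\cW_{rm}$ forces the sequence to traverse at least $r$ red walls $D(\beta)$. The Finiteness input rests on the tame-type miracle that only finitely many regular real Schur roots exist, hence only finitely many cluster cones $R(T)$ lie outside $\cV_k\setminus\cW_k$, together with Lemma \ref{lemmaC} bounding the number of times an $r$-reddening sequence can revisit a given $c$-matrix. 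Once both of these ingredients are in place, the theorem follows by a short combination argument together with the Rotation-Lemma reduction above.
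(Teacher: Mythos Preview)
Your proof is correct and follows essentially the same approach as the paper's: first handle the acyclic case by combining Proposition~\ref{second proposition} with Proposition~\ref{prop: only finitely many green seq of Class 1}, then reduce the general case via the Rotation Lemma by prepending a fixed mutation path to the acyclic quiver and its reverse. The only cosmetic differences are the order of presentation (you do the reduction first) and the direction in which you parametrize the mutation path between $Q$ and the acyclic quiver; your explicit remark on injectivity of the assignment is a nice clarification of what the paper leaves implicit.
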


\begin{proof} Suppose first that $Q$ is acyclic. By Proposition \ref{second proposition}, every $r$-reddening sequence is disjoint from $\cV_{(r+1)m}\backslash\cW_{(r+1)m}$. By Proposition \ref{prop: only finitely many green seq of Class 1} there are only finitely many such sequences. Therefore, there are only finitely many $r$-reddening sequences when $Q$ is acyclic with tame representation type.

In the case when $Q$ is not acyclic, take a fixed mutation sequence $(j_1,\cdots,j_t)$ so that $Q'=\mu_{j_t}\cdots\mu_{j_1}Q$ is a tame acyclic quiver. Every $r$-reddening sequence $(k_0,\cdots,k_s)$ for $Q$ gives an $(r+t)$-reddening sequence $(j_1,\cdots,j_t,j_t,\cdots,j_1,k_0,\cdots,k_s)$ for the same quiver $Q$. Let $\sigma$ be the permutation associated to this sequence. By the Rotation Lemma, $(j_t,\cdots,j_t,$ $k_0,\cdots,k_s,$ $\sigma^{-1}(j_1),\cdots,\sigma^{-1}(j_t))$ is an $(r+t)$-reddening sequence for $Q'$. Since $Q'$ is acyclic, there are only finitely many such sequences. Therefore, there are only finitely many possibilities for the middle part of the sequence which is an arbitrary $r$-reddening sequence for $Q$.
\end{proof}

\section*{Acknowledgements}

This paper is a report on a joint project initiated during the Hall and Cluster Algebras Conference at Centre de Recherches Math\'ematiques, University of Montreal, May 8-12, 2014. The authors would like to thank CRM for hosting this very productive event. We also thank Milen Yakimov whose lecture on maximal green sequences at that event inspired the conversations between the authors.

The first author is supported by NSERC and Bishop's University. The third author acknowledges supported of the National Security Agency, the fourth author was supported by the National Science Foundation. Also, the authors had very useful inspiring conversations with Nathan Reading, Al Garver, Greg Muller and Milen Yakimov at various events, especially the Conference on Strings, Quivers and Cluster Algebras in Mathematical Physics at the Korean Institute for Advanced Study (KIAS) in Seoul, Korea, Dec. 18-22, 2014. We wish to thank KIAS and Kyungyong Lee and the other organizers of this very enjoyable and fruitful conference. We also thank Bishop's University and the organizers of the XXVII-th meeting on Representation Theory of Algebras, Sept 4-5, 2015 where the four authors had a chance to meet to finish this project.

\end{document}